\begin{document}
\theoremstyle{plain}
\newtheorem{theorem}{{\bf Theorem}}[section]
\newtheorem{corollary}[theorem]{Corollary}
\newtheorem{lemma}[theorem]{Lemma}
\newtheorem{proposition}[theorem]{Proposition}
\newtheorem{remark}[theorem]{Remark}

\theoremstyle{definition}
\newtheorem{defn}{Definition}
\newtheorem{definition}[theorem]{Definition}
\newtheorem{example}[theorem]{Example}
\newtheorem{conjecture}[theorem]{Conjecture}

\def\im{\mathop{\rm Im}\nolimits}
\def\dom{\mathop{\rm Dom}\nolimits}
\def\rank{\mathop{\rm rank}\nolimits}
\def\nullset{\mbox{\O}}
\def\ker{\mathop{\rm ker}\nolimits}
\def\implies{\; \Longrightarrow \;}

\def\GR{{\cal R}}
\def\GL{{\cal L}}
\def\GH{{\cal H}}
\def\GD{{\cal D}}
\def\GJ{{\cal J}}

\def\set#1{\{ #1\} }
\def\z{\set{0}}
\def\Sing{{\rm Sing}_n}
\def\nullset{\mbox{\O}}

\title{On  the small Schr\"{o}der semigroup $\mathcal{SS}^{\prime}_{n}$}
\author{\bf  Muhammad Mansur Zubairu\footnote{Corresponding Author. ~~Email: \emph{mmzubairu.mth@buk.edu.ng}}, Abdullahi Umar and  Fatma Salim Al-Kharousi   \\
\it\small  Department of Mathematics, Bayero  University Kano, P. M. B. 3011, Kano, Nigeria\\
\it\small  \texttt{mmzubairu.mth@buk.edu.ng}\\[3mm]
\it\small Department of Mathematical Sciences,\\
\it\small Khalifa University, P. O. Box 127788, Sas al Nakhl, Abu Dhabi, UAE\\
\it\small  \texttt{abdullahi.umar@ku.ac.ae}\\[3mm]
\it\small  Department of Mathematics,\\
\it\small College of Science,\\
\it\small Sultan Qaboos University. \\
\it\small \texttt{fatma9@squ.edu.om} }
\date{\today}
\maketitle\

\begin{abstract}  Let $[n]$ be a  finite $n-$chain $\{1, 2, \dots, n\}$, and let $\mathcal{LS}_{n}$ be the Schr\"{o}der monoid, consisting of all isotone and order-decreasing partial transformations on $[n]$. Furthermore, let $\mathcal{SS}^{\prime}_{n} = \{\alpha \in \mathcal{LS}_{n} : \, 1\not\in \textnormal{Dom } \alpha\}$ be the subsemigroup of $\mathcal{LS}_{n}$, consisting of all transformations in $\mathcal{LS}_{n}$, each of whose domain does not contain $1$. For $1 \leq p \leq n$, let $K(n,p) = \{\alpha \in \mathcal{SS}^{\prime}_{n} : \, |\im \, \alpha| \leq  p\}$
 be the two-sided ideal of $\mathcal{SS}^{\prime}_{n}$. Moreover, let  ${RSS}^{\prime}_{n}(p)$ denote the Rees quotient of $K(n,p)$.  It is shown in this article that for any $S$ in $\{\mathcal{SS}^{\prime}_{n}, K(n,p), {RSS}^{\prime}_{n}(p)\}$, $S$ is right abundant for all values of $n$, but not left abundant for all $n \geq 2$. In addition, the rank of the Rees quotient ${RSS}^{\prime}_{n}(p)$ is shown to be equal to the rank of the two-sided ideal $K(n,p)$, which is equal to $\binom{n-1}{p-1}+\sum\limits_{k=p}^{n-1}\binom{n-1}{k} \binom{k-1}{p-1}$. Finally, the rank of $\mathcal{SS}^{\prime}_{n}$ is determined to be $3n-4$. \end{abstract}

\emph{2020 Mathematics Subject Classification. 20M20.}\\
\textbf{Keywords:} Isotone maps, Order decreasing, abundant semigroup, Rank properties

\section{Introduction and Preliminaries}
  Denote $[n]$ to be the finite chain $\{1,2, \dots ,n\}$. A mapping $\alpha$ whose domain and range are both subsets of $[n]$   is known as a \emph{partial} \emph{transformation} of $[n]$, and it is said be  \emph{full} (or \emph{total}) if such a map has the whole of $[n]$ as its domain.  The  collection of all partial transformations on $[n]$ is usually denoted by $\mathcal{P}_{n}$, and is referred to as \emph{the semigroup of all partial transformations} on $[n]$, more commonly known as \emph{the partial symmetric monoid}. A transformation $\alpha\in \mathcal{P}_{n}$ is said to be an \emph{ isotone} map (resp., an \emph{anti-tone} map) if  (for all $x,y \in \dom\,\alpha$) $x\leq y$ implies $x\alpha\leq y\alpha$ (resp., $x\alpha\geq y\alpha$); \emph{order decreasing} if (for all $x\in \dom\,\alpha$) $x\alpha\leq x$.  As in \cite{auc}, we shall refer to $\mathcal{LS}_{n}$ (\emph{semigroup of all isotone order-decreasing partial transformations} on $[n]$) as the \emph{large} \emph{Schr\"{o}der} monoid and  it is defined as: \begin{equation}\label{qn111}\mathcal{LS}_{n}= \mathcal{OP}_n\cap \mathcal{DP}_n ,\end{equation} \noindent where $\mathcal{DP}_n$ and $\mathcal{OP}_n$  denote  \emph{the semigroup of all order-decreasing partial transformations} on $[n]$ and the \emph{semigroup of all isotone partial transformations} on $[n]$, respectively. Moreover, let

\begin{equation}\label{qn1}
 	\mathcal{SS}_{n} = \{\alpha \in \mathcal{LS}_{n} : 1 \in \textnormal{Dom } \alpha \}
 \end{equation}
\noindent be the set of all maps in $\mathcal{LS}_{n}$ each of whose domain contains  1 and
\begin{equation}\label{qn2}
	\mathcal{SS}^{\prime}_{n} = \{\alpha \in \mathcal{LS}_{n} : 1 \notin \text{Dom } \alpha\}
\end{equation}
 \noindent be the set of all maps in $\mathcal{LS}_{n}$ each of whose domain does not contain  1. In other words, $\mathcal{SS}^{\prime}_{n}$ is the set complement of $\mathcal{SS}_{n}$. These sets first appeared in \cite{al5}, where they were shown to be subsemigroups of $\mathcal{LS}_{n}$ and  to have equal size, which coincides with the (\emph{small}) \emph{Schr\"{o}der number}:
\[s_{0}=1, \, \, s_{n}= \frac{1}{2(n+1)} \sum\limits_{r=0}^{n}\binom{n+1}{n-r}\binom{n+r}{r} \, \, (n\geq 1).\]
Subsequently, in \cite{zuf}, the monoids $\mathcal{LS}_{n}$ and $\mathcal{SS}_{n}$ are both shown to be idempotent-generated abundant semigroups \cite{FOUN2}. Moreover, in \cite{zuf}, the ranks of their respective two-sided ideals and their Rees quotient semigroups were all obtained.

However, it seems the algebraic as well as the rank properties of the semigroup $\mathcal{SS}^{\prime}_{n}$ have not been investigated. In this paper, we investigate certain algebraic properties of $\mathcal{SS}^{\prime}_{n}$ and its rank properties. Therefore, this paper is a natural sequel to \cite{al5, zuf}. It is worth noting that $\mathcal{SS}_{1}^{\prime}=\emptyset$, as such we shall henceforth consider $n\geq 2$. Also, it is worth noting that $\mathcal{SS}_{n}^{\prime}$ has no identity element; therefore, in line with \cite{al5, zuf}, we shall refer to it as the \emph{small} \emph{Schr\"{o}der semigroup}.

Moreover, we shall adopt the right-hand composition of two elements say $\alpha$ and $\beta$ in $\mathcal{P}_{n}$ defined as $x(\alpha\circ\beta)=((x)\alpha)\beta$ for all $x\in\dom\, \alpha$. We may without ambiguity be using the notation $\alpha\beta$ to denote $\alpha\circ\beta$. The notations $1_{[n]}$, $\im \alpha$, $\dom \alpha$, $h(\alpha)=|\im \, \alpha|$ and $F(\alpha)=\{x\in \dom \, \alpha: \, x\alpha=x\}$ shall denote the identity map on $[n]$, the image set of a map $\alpha$, the domain set of the map $\alpha$, the height of $\alpha$, and the set of fixed points of $\alpha$, respectively. Also, we will let $f(\alpha)=|F(\alpha)|$ to be the number of fixed points of $\alpha$.

  Furthermore, for $0\le p\le n-1$, we let \begin{equation} \label{kn} K(n,p)=\{\alpha\in  \mathcal{SS}_{n}^{\prime}: \, |\im \, \alpha|\le p\}\end{equation}
  \noindent be the two sided ideal of $\mathcal{SS}_{n}^{\prime}$, consisting of all decreasing isotone maps in  $\mathcal{SS}_{n}^{\prime}$, each with a height at most $p$.

  Furthermore, for $p\geq 1$, we let \begin{equation}\label{knn} {RSS}_{n}^{\prime}(p)= K(n,p)/ K(n, p-1)  \end{equation}
  \noindent be the Rees quotient semigroup of $K(n,p)$.  The elements of ${RSS}_{n}^{\prime}(p)$ can be considered as the elements of $\mathcal{SS}_{n}^{\prime}$ of exactly height $p$. The product of two elements of  ${RSS}_{n}^{\prime}(p)$ is $0$ if their product in ${RSS}_{n}^{\prime}(p)$ has a height strictly less than $p$, otherwise it is in ${RSS}_{n}^{\prime}(p)\setminus\{0\}$.

\indent   As in \cite{HRS}, every $\alpha\in \mathcal{SS}_{n}^{\prime}$  can be represented as
\begin{equation}\label{1}\alpha=\begin{pmatrix}A_1&\cdots&A_p\\a_1&\cdots&a_p\end{pmatrix}   \,  (1\le p\le n-1),\end{equation}  where  $a_{i}\leq \min A_{i}$ for all $1\leq i\leq p$ since $\alpha$ is  a decreasing map, and  each of the sets $A_i$ $(1\le i\le p)$  denote an equivalence class defined by the relation $\textnormal{ker }\alpha=\{(x, y)\in \dom \, \alpha\times \dom \, \alpha: \,  x\alpha=y\alpha\}$. The collection of these equivalence classes  shall be denoted as $\textnormal{\bf Ker }\alpha=\{A_1, A_2, \dots, A_p\}$.  Furthermore, $\textnormal{\bf Ker }\alpha$ is linearly ordered (i.e., for $i<j$, $A_{i}<A_{j}$ if and only if $a<b$ for all $a\in A_{i}$ and $b\in A_{j}$). Moreover, we may without loss of generality assume that $1\leq a_{1}<a_{2}<\dots<a_{p}\leq n$, since $\alpha$ is an isotone map.
For more details on basic concepts in semigroup theory, we recommend to the reader  the books of  Howie \cite{howi} and Higgins \cite{ph}.

At this juncture, we briefly describe the general structure of the paper. In Section 1, we provide definitions of some basic terms. In Section 2, we characterize Green's relations as well as their starred analogues, demonstrating that the small Schrödler semigroup \( \mathcal{SS}^{\prime}_{n} \) and its two-sided ideal \( K(n,p) \) are right abundant but not left abundant for all \( n > 2 \). Moreover, we compute the number of \( \mathcal{R}^* \)-classes, \( \mathcal{L}^{*} \)-classes, and the number of idempotent elements in \( \mathcal{SS}^{\prime}_{n} \). Finally, in Section 3, we calculate the ranks of the Rees quotients, the two-sided ideals \( K(n,p) \), and the small Schrödler semigroup \( \mathcal{SS}^{\prime}_{n} \).

\section{Regularity, Green's relations and starred Green's relations on the Schr\"{o}der semigroup  $\mathcal{SS}^{\prime}_{n} $}

 In semigroup theory, there are five Green's relations, namely $\mathcal{L,R,D , J\ \text{and } H}$. The relation $\mathcal{L}$ is defined as: for $a$, $b$ in a semigroup $S$, $(a,b)\in \mathcal{L}$ if and only if  $a$ and $b$ generates the same principal left ideal; while the relation $\mathcal{R}$ is defined dually; the relation $\mathcal{J}$ is defined as:$(a,b)\in \mathcal{J}$ if and only if  $a$ and $b$ generates the same principal ideal;  the relation $\mathcal{D}$ is the join of $\mathcal{L}$ and $\mathcal{R}$; while $\mathcal{H}=\mathcal{L} \cap \mathcal{R}$. It is well-known that  in a finite semigroup $\mathcal{D }=\mathcal{J}$ (see [\cite{howi}, Proposition 2.1.4]). Thus, we shall only focus on characterizing the relations $\mathcal{L,R,D \, \text{and } H}$ on the  small  Schr\"{o}der semigroup $\mathcal{SS}_{n}^{\prime}$. An element $a$ in a semigroup $ S$  is said to be \emph{regular} if  $a=aba$  for some $b\in S$, and $S$ is said to be a \emph{regular semigroup} if all its elements are regular. The structural properties of a regular semigroup has been extensively studied in the literature, see for example section 2 and 4 of Howie's book \cite{howi}.

From this point forward in this section, we shall  refer to $\alpha$ and $\beta$ in $\mathcal{SS}^{\prime}_{n}$ as:
 \begin{equation} \label{eqq3}
	\alpha = \begin{pmatrix}A_1&\cdots& A_p\\a_{1}&\cdots& a_p\end{pmatrix} \text{and} \  \beta = \begin{pmatrix} B_1 &  \cdots & B_p \\ b_{1} & \cdots  & b_p \end{pmatrix}  \, (1\leq p\leq n-1).
\end{equation}

 We now have the following theorems.

 \begin{theorem}\label{l}
 Let $\alpha,\beta \in \mathcal{SS}^{\prime}_{n}$  be as in   \eqref{eqq3}. Then $\alpha\mathcal{L}\beta$ if and only if
 $\im \, \alpha=\im \, \beta$ \emph{(}i.e., $a_i = b_i$ for $1\leq i\leq p$\emph{)} and $\min A_i =  \min B_i$ for all $1\leq i\leq p$.
 \end{theorem}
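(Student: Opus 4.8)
The plan is to prove the two implications separately, working from the definition that $\alpha\,\mathcal{L}\,\beta$ holds in $\mathcal{SS}^{\prime}_{n}$ precisely when $(\mathcal{SS}^{\prime}_{n})^{1}\alpha=(\mathcal{SS}^{\prime}_{n})^{1}\beta$, equivalently when there exist $\gamma,\delta\in(\mathcal{SS}^{\prime}_{n})^{1}$ with $\gamma\beta=\alpha$ and $\delta\alpha=\beta$. For the forward direction I would first extract the image condition. Since $\im(\gamma\beta)\subseteq\im\beta$ always holds, $\alpha=\gamma\beta$ gives $\im\alpha\subseteq\im\beta$, and symmetrically $\im\beta\subseteq\im\alpha$; hence $\im\alpha=\im\beta$, and as both image sets are listed in strictly increasing order this forces $a_i=b_i$ for all $i$ and in particular that the two maps share the same number $p$ of classes. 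The genuinely new ingredient is the minimum condition, and this is where order-decreasingness does the work. If $\gamma$ is the adjoined identity then $\alpha=\beta$ and there is nothing to prove, so assume $\gamma\in\mathcal{SS}^{\prime}_{n}$. Fixing a class $A_i$ and any $x\in A_i$, we have $a_i=x\alpha=(x\gamma)\beta$, so $x\gamma$ lies in the $\beta$-class mapping to $a_i=b_i$, namely $B_i$; since $\gamma$ is order-decreasing, $\min B_i\le x\gamma\le x$, and taking $x=\min A_i$ yields $\min B_i\le\min A_i$. Applying the same reasoning to $\delta\alpha=\beta$ gives the reverse inequality, so $\min A_i=\min B_i$ for all $i$.

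For the converse I would construct explicit multipliers. Assuming $a_i=b_i$ and $\min A_i=\min B_i=:m_i$ for all $i$, define $\gamma$ on $\dom\alpha$ by $x\gamma=m_i$ whenever $x\in A_i$. I claim $\gamma\in\mathcal{SS}^{\prime}_{n}$: its domain is $\dom\alpha$, which omits $1$; it is order-decreasing because $x\ge\min A_i=m_i$ for $x\in A_i$; and it is isotone because for $i<j$ the linear order on the classes gives $m_i=\min A_i<\min A_j=m_j$. Moreover $\im\gamma=\{m_1,\dots,m_p\}\subseteq\dom\beta$ since $m_i=\min B_i\in B_i\subseteq\dom\beta$, whence $(x\gamma)\beta=m_i\beta=b_i=a_i=x\alpha$ with matching domains, so $\gamma\beta=\alpha$. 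Interchanging the roles of $\alpha$ and $\beta$ (now using $m_i=\min A_i\in A_i\subseteq\dom\alpha$) produces a $\delta\in\mathcal{SS}^{\prime}_{n}$ with $\delta\alpha=\beta$, and therefore $\alpha\,\mathcal{L}\,\beta$.

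The step I expect to be the main obstacle is the forward minimum inequality: it is the only place that truly exploits the order-decreasing structure, since an equal-image argument by itself would merely reproduce the classical $\mathcal{L}$-characterisation for transformation semigroups, and one must be careful to route through the membership $x\gamma\in B_i$ before invoking $x\gamma\le x$. In the converse the only points requiring care are confirming that the maps that are constant on classes are simultaneously order-decreasing and isotone, and that their images land inside the correct domains — all of which hinge precisely on the equality $\min A_i=\min B_i$.
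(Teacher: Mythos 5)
Your proof is correct. The paper gives no argument for this theorem --- it simply defers to Lemma 2.2.1(2) of Umar's thesis --- and your two-directional argument (image containment from one-sided multiplication together with order-decreasingness and the membership $x\gamma\in B_i$ forcing $\min A_i=\min B_i$, then explicit multipliers constant on kernel classes for the converse) is precisely the standard proof that citation points to, so it supplies in full the details the paper omits.
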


 \begin{proof}
	 The proof  resembles the proof of [\cite{umar}, Lemma 2. 2. 1(2)].	
 \end{proof}
\begin{theorem}\label{r}
 Let $\mathcal{SS}_{n}^{\prime}$ be as defined in \eqref{qn2}. Then $\mathcal{SS}_{n}^{\prime}$ is $\mathcal{R}-$trivial.
 \end{theorem}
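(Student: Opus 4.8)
The plan is to show that the relation $\mathcal{R}$ collapses to equality, i.e. that $\alpha\mathcal{R}\beta$ forces $\alpha=\beta$, so that every $\mathcal{R}$-class is a singleton. Since $\mathcal{SS}_{n}^{\prime}$ has no identity, I would work in the monoid $S^{1}=\mathcal{SS}_{n}^{\prime}\cup\{1\}$ and unpack $\alpha\mathcal{R}\beta$ as the existence of $u,v\in S^{1}$ with $\beta=\alpha u$ and $\alpha=\beta v$. If either $u$ or $v$ equals the adjoined identity, the conclusion $\alpha=\beta$ is immediate, so the only substantive case is $u,v\in\mathcal{SS}_{n}^{\prime}$.

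First I would pin down the domains. From $\beta=\alpha u$, every $x\in\dom\,\beta$ must satisfy $x\in\dom\,\alpha$ and $x\alpha\in\dom\,u$, whence $\dom\,\beta\subseteq\dom\,\alpha$; symmetrically $\alpha=\beta v$ gives $\dom\,\alpha\subseteq\dom\,\beta$. Thus $\dom\,\alpha=\dom\,\beta$, and I can argue pointwise on this common domain, say $D$.

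The crux is the order-decreasing hypothesis. For $x\in D$ I would write $x\beta=(x\alpha)u\leq x\alpha$, using that $u$ is order-decreasing together with $x\alpha\in\dom\,u$ established above; symmetrically $x\alpha=(x\beta)v\leq x\beta$. Combining the two inequalities yields $x\alpha=x\beta$ for every $x\in D$, hence $\alpha=\beta$. Therefore $\mathcal{R}$ is the identity relation and $\mathcal{SS}_{n}^{\prime}$ is $\mathcal{R}$-trivial.

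I do not anticipate a serious obstacle: the argument is short and rests entirely on the order-decreasing property, not on isotonicity. The only points requiring care are the bookkeeping for the adjoined identity in $S^{1}$ and the verification that the relevant composites are actually defined (that $x\alpha\in\dom\,u$ and $x\beta\in\dom\,v$), both of which follow from the domain equality proved at the outset. It is worth noting that the same reasoning establishes $\mathcal{R}$-triviality for any order-decreasing transformation semigroup, so the restriction $1\notin\dom\,\alpha$ plays no role in this particular statement.
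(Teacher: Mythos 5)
Your proof is correct, but it takes a genuinely different route from the paper. The paper's proof is a one-liner: $\mathcal{SS}_{n}^{\prime}$ is a subsemigroup of the large Schr\"{o}der monoid $\mathcal{LS}_{n}$, which is already known to be $\mathcal{R}$-trivial (the paper cites Higgins, Theorem 4.2 of \emph{Divisors of semigroups of order-preserving mappings on a finite chain}), and a subsemigroup of an $\mathcal{R}$-trivial semigroup is $\mathcal{R}$-trivial, since any factorizations $\beta=\alpha u$, $\alpha=\beta v$ witnessing $\alpha\mathcal{R}\beta$ in the subsemigroup also witness $\alpha\mathcal{R}\beta$ in the ambient semigroup, forcing $\alpha=\beta$ there. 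Your argument instead re-proves the underlying order-theoretic fact from scratch: you unpack $\mathcal{R}$ in $(\mathcal{SS}_{n}^{\prime})^{1}$, deduce $\dom\,\alpha=\dom\,\beta$ from the two factorizations, and then use the order-decreasing property of the witnesses $u,v$ to get $x\beta\leq x\alpha$ and $x\alpha\leq x\beta$ pointwise, hence equality. This is essentially the classical proof that any semigroup of order-decreasing partial transformations (in particular $\mathcal{DP}_{n}$, hence $\mathcal{LS}_{n}$) is $\mathcal{R}$-trivial, so you have in effect inlined the proof of the theorem the paper cites. What your version buys is self-containedness, plus the explicit observation that isotonicity and the condition $1\notin\dom\,\alpha$ play no role; what the paper's version buys is brevity, delegating the order-theoretic work to the literature. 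Both are complete and valid proofs.
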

 \begin{proof} It is clear that   $\mathcal{SS}_{n}^{\prime}$ is  $\mathcal{R}-$trivial being a subsemigroup of an $\mathcal{R}-$trivial semigroup, $\mathcal{LS}_{n}$ [\cite{ph1}, Theorem 4.2].
 \end{proof}

  As a consequence of the above Theorems \ref{l} and \ref{r}, we readily have the following corollaries.
\begin{corollary}
On the Schr\"{o}der semigroup $\mathcal{SS}_{n}^{\prime}$, $\mathcal{H} = \mathcal{R}$.
\end{corollary}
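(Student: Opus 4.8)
The plan is to deduce this directly from the $\mathcal{R}$-triviality established in Theorem \ref{r}, together with the defining identity $\mathcal{H} = \mathcal{L} \cap \mathcal{R}$. First I would restate what $\mathcal{R}$-triviality means: every $\mathcal{R}$-class of $\mathcal{SS}_{n}^{\prime}$ is a singleton, which is the same as saying that $\mathcal{R}$ coincides with the identity (diagonal) relation $\Delta = \{(\alpha,\alpha) : \alpha \in \mathcal{SS}_{n}^{\prime}\}$ on the semigroup.

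Next I would use the definition $\mathcal{H} = \mathcal{L} \cap \mathcal{R}$ to get the inclusion $\mathcal{H} \subseteq \mathcal{R} = \Delta$ immediately, since an intersection is contained in each of its factors. For the reverse inclusion, I would note that $\mathcal{H}$, being one of Green's relations, is an equivalence relation on $\mathcal{SS}_{n}^{\prime}$ and hence reflexive, so $\Delta \subseteq \mathcal{H}$. Combining the two inclusions yields $\mathcal{H} = \Delta = \mathcal{R}$, which is exactly the assertion.

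There is no genuine obstacle here: the corollary is a purely formal consequence of the fact that in an $\mathcal{R}$-trivial semigroup the relation $\mathcal{R}$ is the diagonal, and the diagonal is contained in every Green's relation. The only point requiring a little care is to invoke Theorem \ref{r} in the correct form, namely that ``$\mathcal{R}$-trivial'' is understood as ``singleton $\mathcal{R}$-classes'' (equivalently $\mathcal{R} = \Delta$), so that intersecting with $\mathcal{L}$ cannot enlarge $\mathcal{R}$ beyond the diagonal and the two relations must coincide.
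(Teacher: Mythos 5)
Your proof is correct and is essentially the argument the paper intends: the corollary is stated there as an immediate consequence of $\mathcal{R}$-triviality (Theorem \ref{r}), and your two inclusions $\mathcal{H}=\mathcal{L}\cap\mathcal{R}\subseteq\mathcal{R}=\Delta$ and $\Delta\subseteq\mathcal{H}$ (reflexivity) make that deduction explicit. You also correctly isolate that only Theorem \ref{r} is needed, not the characterization of $\mathcal{L}$ in Theorem \ref{l}, which the paper cites alongside it.
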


 \begin{corollary}\label{rem1} Let $\alpha \in  \mathcal{SS}_{n}^{\prime}$. Then $\alpha$ is regular if and only if $\alpha$ is an idempotent. Consequently, the semigroup $\mathcal{SS}_{n}^{\prime}$ is nonregular.
 \end{corollary}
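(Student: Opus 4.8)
The plan is to exploit the $\mathcal{R}$-triviality established in Theorem \ref{r} together with the standard fact that an element $a$ of a semigroup $S$ is regular if and only if its $\mathcal{R}$-class $R_a$ contains an idempotent. I would first recall why this fact holds, so the corollary becomes essentially immediate. For the forward direction, if $a = aba$ then $e := ab$ satisfies $e^2 = abab = (aba)b = ab = e$, so $e$ is idempotent, and from $ea = aba = a$ together with $e = ab$ one verifies $R_a = R_e$. For the converse, given an idempotent $e \in R_a$ one writes $e = au$ and $a = ev$ in $S^1$; then $ea = e^2 v = ev = a$, whence $a = ea = aua$, exhibiting $a$ as regular.

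Next I would specialize to $S = \mathcal{SS}^{\prime}_{n}$. By Theorem \ref{r} the semigroup is $\mathcal{R}$-trivial, so every $\mathcal{R}$-class is a singleton. Consequently, if $\alpha \in \mathcal{SS}^{\prime}_{n}$ is regular, the idempotent that must lie in $R_\alpha = \{\alpha\}$ can only be $\alpha$ itself, forcing $\alpha$ to be idempotent. The reverse implication is trivial, since any idempotent $e$ satisfies $e = eee$ and is therefore regular. This yields the stated equivalence that an element of $\mathcal{SS}^{\prime}_{n}$ is regular precisely when it is idempotent.

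Finally, to deduce that $\mathcal{SS}^{\prime}_{n}$ is nonregular, it suffices, by the equivalence just proved, to exhibit one non-idempotent element. For $n \geq 2$ the map $\alpha$ sending $2 \mapsto 1$ lies in $\mathcal{SS}^{\prime}_{n}$, since $\dom\,\alpha = \{2\}$ excludes $1$ and $\alpha$ is vacuously isotone and order-decreasing; yet because $1 \notin \dom\,\alpha$, the composite $2(\alpha\circ\alpha)$ is undefined, so $\alpha^2 \neq \alpha$ and $\alpha$ is not idempotent, hence not regular. I do not expect a genuine obstacle here, as the whole argument rests on an elementary lemma; the only points needing care are the clean handling of the regular-element characterization in the monoid $S^1$ and the verification that the chosen witness indeed fails to be idempotent for every $n \geq 2$.
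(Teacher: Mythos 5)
Your proof is correct and takes essentially the same approach as the paper: both arguments rest on the $\mathcal{R}$-triviality of $\mathcal{SS}^{\prime}_{n}$ (Theorem \ref{r}) combined with the standard fact that an element is regular if and only if its $\mathcal{R}$-class contains an idempotent, which forces every regular element of an $\mathcal{R}$-trivial semigroup to be idempotent. You simply spell out the proof of that standard fact and exhibit the explicit non-idempotent witness $2 \mapsto 1$, details the paper leaves implicit.
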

\begin{proof} The result follows from the fact that in an  $\mathcal{R}$-trivial semigroup, every nonidempotent element is not regular.
 \end{proof}

 \begin{theorem} Let $\mathcal{SS}_{n}^{\prime}$ be as defined in \eqref{qn2}. Then $ \mathcal{D} = \mathcal{L}$.	
	\end{theorem}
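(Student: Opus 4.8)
The plan is to prove $\mathcal{D} = \mathcal{L}$ by combining the two facts already established in this section. Since $\mathcal{D}$ is by definition the join of $\mathcal{L}$ and $\mathcal{R}$, and since every element of a semigroup satisfies $\mathcal{H} = \mathcal{L} \cap \mathcal{R} \subseteq \mathcal{L} \subseteq \mathcal{D}$, the inclusion $\mathcal{L} \subseteq \mathcal{D}$ is automatic. The real content is the reverse inclusion $\mathcal{D} \subseteq \mathcal{L}$, and here I would lean directly on Theorem~\ref{r}, which asserts that $\mathcal{SS}_{n}^{\prime}$ is $\mathcal{R}$-trivial.

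First I would recall that $\mathcal{D} = \mathcal{R} \circ \mathcal{L} = \mathcal{L} \circ \mathcal{R}$ (the composition of the two equivalences, which coincide and form an equivalence in any semigroup; see Howie~\cite{howi}). Now suppose $\alpha \mathcal{D} \beta$. Then there exists $\gamma \in \mathcal{SS}_{n}^{\prime}$ with $\alpha \mathcal{R} \gamma$ and $\gamma \mathcal{L} \beta$. Because $\mathcal{SS}_{n}^{\prime}$ is $\mathcal{R}$-trivial, the relation $\mathcal{R}$ is the identity relation, so $\alpha \mathcal{R} \gamma$ forces $\alpha = \gamma$. Substituting gives $\alpha \mathcal{L} \beta$, which is exactly the desired conclusion. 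Thus $\mathcal{D} \subseteq \mathcal{L}$, and together with the trivial reverse inclusion we obtain $\mathcal{D} = \mathcal{L}$.

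There is essentially no obstacle here: the result is an immediate structural consequence of $\mathcal{R}$-triviality. The only point requiring any care is the choice of which factorization of $\mathcal{D}$ to invoke; using $\mathcal{D} = \mathcal{R} \circ \mathcal{L}$ lets the $\mathcal{R}$-triviality collapse the intermediate element into $\alpha$ itself, whereas the symmetric factorization $\mathcal{L} \circ \mathcal{R}$ would collapse it into $\beta$ and work equally well. Either route yields the same one-line argument, so I would present whichever reads most cleanly alongside Theorem~\ref{r}.
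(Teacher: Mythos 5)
Your proof is correct and takes essentially the same approach as the paper: the paper's proof is a one-line appeal to $\mathcal{L}\subseteq\mathcal{D}$ together with the $\mathcal{R}$-triviality of $\mathcal{SS}_{n}^{\prime}$ from Theorem~\ref{r}. Your argument via the factorization $\mathcal{D}=\mathcal{R}\circ\mathcal{L}$ is precisely the detail that the paper leaves implicit, so it supplies a fuller write-up of the same idea rather than a different route.
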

\begin{proof}
The result follows from the fact that $\mathcal{L}\subseteq\mathcal{D}$, and that $\mathcal{SS}_{n}^{\prime}$ is $\mathcal{R}$-trivial from Theorem \ref{r}.
\end{proof}

Consequently, based on the above three theorems, we deduce the following characterization of Green's equivalences on a semigroup $S$ in $\{{RSS}_{n}^{\prime}(p),  \, K(n,p) \}$.

 \begin{theorem} Let $S\in \{{RSS}_{n}^{\prime}(p), \, K(n,p) \}$ and  let $\alpha, \, \beta \in S$.
 Then \begin{itemize} \item[(i)] $\alpha \mathcal{L} \beta$ if and only if $\im \, \alpha = \im \, \beta$ \emph{(}i.e., $a_i = b_i$ for $1 \leq i \leq p$\emph{) }and $\min A_i = \min B_i$ for all $1 \leq i \leq p$; \item[(ii)] $S$ is $\mathcal{R}$-trivial; \item[(iii)] $\mathcal{H} = \mathcal{R}$;  \item[(iv)] $\mathcal{D} = \mathcal{L}$.\end{itemize} Hence, for $p \geq 2$, the semigroup $S$ is nonregular.
\end{theorem}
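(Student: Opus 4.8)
The plan is to bootstrap all four assertions from the results already established for $\mathcal{SS}_{n}^{\prime}$, exploiting that $K(n,p)$ is a subsemigroup (indeed a two-sided ideal) of $\mathcal{SS}_{n}^{\prime}$ and that ${RSS}_{n}^{\prime}(p)$ is its Rees quotient. I would start with part (ii), since it is the engine that drives (iii) and (iv). For any subsemigroup $T$ of a semigroup $U$ and $\alpha,\beta\in T$, one has $\alpha\mathcal{R}^{T}\beta\implies\alpha\mathcal{R}^{U}\beta$, because the witnessing multipliers lie in $T^{1}\subseteq U^{1}$; hence $\mathcal{R}$-triviality descends to every subsemigroup. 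Applying this to $T=K(n,p)\le U=\mathcal{SS}_{n}^{\prime}$ and invoking Theorem \ref{r} shows $K(n,p)$ is $\mathcal{R}$-trivial. For the Rees quotient I would argue that if $\alpha\mathcal{R}^{{RSS}_{n}^{\prime}(p)}\beta$ with $\alpha,\beta\neq 0$, then the witnessing products, being nonzero, coincide with the products computed in $K(n,p)$, so $\alpha\mathcal{R}^{K(n,p)}\beta$ and therefore $\alpha=\beta$; the zero is a singleton $\mathcal{R}$-class. This establishes (ii) in both cases.

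Parts (iii) and (iv) then follow formally once $\mathcal{R}$ is the identity relation $\Delta$ on $S$. Since $\mathcal{H}=\mathcal{L}\cap\mathcal{R}\subseteq\mathcal{R}$ while $\Delta\subseteq\mathcal{H}$ always, we get $\mathcal{H}=\mathcal{R}$, giving (iii). Likewise $\mathcal{D}=\mathcal{L}\circ\mathcal{R}=\mathcal{L}\circ\Delta=\mathcal{L}$, giving (iv); equivalently, $\mathcal{L}\subseteq\mathcal{D}$ always and $\mathcal{R}=\Delta\subseteq\mathcal{L}$ forces the join $\mathcal{L}\vee\mathcal{R}$ to collapse to $\mathcal{L}$.

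The substantive step is part (i), and this is where I expect the only real work. One inclusion is easy: if $\alpha\mathcal{L}^{S}\beta$ then $\alpha\mathcal{L}^{\mathcal{SS}_{n}^{\prime}}\beta$ (again because $S^{1}\subseteq(\mathcal{SS}_{n}^{\prime})^{1}$, with nonzero products in the Rees quotient coinciding with those in $K(n,p)$), so Theorem \ref{l} forces $\im\,\alpha=\im\,\beta$ and $\min A_{i}=\min B_{i}$. The converse is the crux, because I must produce the left multipliers witnessing $\alpha\mathcal{L}\beta$ \emph{inside} $S$, not merely inside $\mathcal{SS}_{n}^{\prime}$. Writing $m_{i}=\min A_{i}=\min B_{i}$, I would exhibit the explicit maps $\gamma=\begin{pmatrix}A_{1}&\cdots&A_{p}\\m_{1}&\cdots&m_{p}\end{pmatrix}$ and $\delta=\begin{pmatrix}B_{1}&\cdots&B_{p}\\m_{1}&\cdots&m_{p}\end{pmatrix}$ and verify that $\gamma,\delta\in\mathcal{SS}_{n}^{\prime}$ (they are isotone since the kernel classes are linearly ordered, order-decreasing since $m_{i}=\min A_{i}$, and omit $1$ from their domains), that $\gamma\beta=\alpha$ and $\delta\alpha=\beta$ (using $m_{i}\in B_{i}$ and $m_{i}\in A_{i}$ together with $a_{i}=b_{i}$), and — this is the point that makes the argument valid in $S$ rather than only in $\mathcal{SS}_{n}^{\prime}$ — that $h(\gamma)=h(\delta)=h(\alpha)=h(\beta)$, so $\gamma,\delta\in K(n,p)$ and are nonzero of height $p$ in ${RSS}_{n}^{\prime}(p)$. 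This yields $\alpha\mathcal{L}^{S}\beta$ and closes (i). The anticipated obstacle is exactly this height-control bookkeeping; the composition identities themselves are the routine verifications underlying Theorem \ref{l}.

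Finally, for the nonregularity clause with $p\ge 2$ (so $n\ge p+1\ge 3$), I would display a single non-idempotent of height exactly $p$, for instance $\alpha=\begin{pmatrix}\{2\}&\{3\}&\cdots&\{p+1\}\\1&2&\cdots&p\end{pmatrix}$, which lies in $K(n,p)$ and is nonzero in ${RSS}_{n}^{\prime}(p)$. Since $2\alpha=1\notin\dom\,\alpha$, the square $\alpha^{2}$ has height $p-1$, so $\alpha$ is not idempotent; because $S$ is $\mathcal{R}$-trivial, every non-idempotent fails to be regular (precisely as in the proof of Corollary \ref{rem1}), whence $S$ is nonregular.
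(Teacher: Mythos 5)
Your proposal is correct and follows essentially the same route as the paper: the paper states this theorem without a separate proof, presenting it as a direct consequence of Theorem \ref{l}, Theorem \ref{r} and their corollaries for $\mathcal{SS}^{\prime}_{n}$ — precisely the transfer-to-ideal-and-Rees-quotient argument you spell out. Your writeup in fact supplies the details the paper leaves implicit, most usefully the observation that the converse of (i) requires left multipliers of height $p$ lying \emph{inside} $S$ (not merely in $\mathcal{SS}^{\prime}_{n}$), which your explicit maps $\gamma$ and $\delta$ provide.
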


If a semigroup lacks regularity, it is customary to investigate the starred Green's relations to determine its algebraic classification. Therefore, we will proceed to characterize the starred counterparts of Green's equivalences on $S\in \{\mathcal{SS}_{n}^{\prime}, {RSS}_{n}^{\prime}(p), \, K(n,p) \}$. We refer the reader  to Fountain \cite{FOUN2} for  the definitions and basic properties of these relations.

The five  starred Green's equivalences are: $\mathcal{L}^*$, $\mathcal{R}^*$, $\mathcal{D}^*$, $\mathcal{J}^*$, and $\mathcal{H}^*$ . The relation $\mathcal{H}^*$ is the intersection of $\mathcal{L}^*$ and $\mathcal{R}^*$, while $\mathcal{D}^*$ is the join of $\mathcal{L}^*$ and $\mathcal{R}^*$. It is a known fact that in a finite non-regular semigroup, $\mathcal{L^*}\circ\mathcal{R^*}$ does not necessarily commute. Moreover, the relations $\mathcal{L^*}$ and $\mathcal{R}^*$ have the following characterizations on any semigroup $S$:
 \begin{equation}\label{l8}
	\mathcal{L}^* = \{(a,b)  \in S\times S \ : \ (\text{for all} \ x,y \in S^1) \  ax = ay  \iff bx = by \};
\end{equation}

\begin{equation}\label{r8}
	\mathcal{R}^* = \{(a,b)  \in S\times S \ : \  (\text{for all } x,y \in S^1) \  xa = ya  \iff xb = yb \}.
\end{equation}

 A semigroup $S$ is said to be \emph{left abundant} if each $\mathcal{L}^*$-class contains an idempotent; it is said to be \emph{right abundant} if each $\mathcal{R}^*$-class contains an idempotent; and it is \emph{abundant} if it is both left and right abundant. These classes of semigroups were introduced by Fountain \cite{FOUN, FOUN2}. There are many classes of  transformation semigroups that have been shown to be either left abundant, right abundant, or abundant; see, for example \cite{al1, um,umar,  quasi,  zm1}. Before we characterize the starred Green's relations, we need the following definition and halfway results of lemmas from \cite{quasi}: A subsemigroup $U$ of $S$ is called  a \emph{right inverse ideal} of $S$ if for all $u \in U$, there exists $u^{\prime} \in S$ such that $uu^{\prime}u = u$ and  $uu^{\prime}\in U$; it is called  a \emph{left inverse ideal} of $S$ if for all $u \in U$, there exists $u^{\prime} \in S$ such that $uu^{\prime}u = u$ and  $u^{\prime}u\in U$; and it is an  \emph{inverse ideal} of $S$ if it is both left and right inverse ideal of $S$.

 \begin{lemma}[\cite{quasi}, Lemma 3.1.8.]\label{inv1}  Every right inverse  ideal $U$ of a semigroup $S$ is right abundant.
 \end{lemma}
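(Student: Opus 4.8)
The plan is to show that for each $u \in U$, the $\mathcal{R}^*$-class of $u$ contains an idempotent, where $\mathcal{R}^*$ is understood inside $U$. The right inverse ideal hypothesis hands us a natural candidate: given $u \in U$, fix $u' \in S$ with $uu'u = u$ and with $e := uu' \in U$. The idempotent I will exhibit is precisely this $e$.

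First I would check that $e$ is idempotent and lies in $U$. Membership is immediate from the definition of a right inverse ideal, while $e^2 = (uu')(uu') = (uu'u)u' = uu' = e$ establishes idempotency. I also record the relation $eu = uu'u = u$, which I will use repeatedly below.

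Next I would verify that $u\,\mathcal{R}^*\,e$ holds in $S$ via the characterization \eqref{r8}; that is, for all $x,y \in S^1$ I must show $xu = yu \iff xe = ye$. For the forward implication, if $xu = yu$ then multiplying on the right by $u'$ yields $xe = (xu)u' = (yu)u' = ye$. For the converse, if $xe = ye$ then multiplying on the right by $u$ and using $eu = u$ gives $xu = (xe)u = (ye)u = yu$. Hence $(u,e) \in \mathcal{R}^*$ in $S$.

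Finally I would transfer this conclusion to $U$, which is the one point I expect to require care, since the abundance of $U$ refers to $\mathcal{R}^*$ computed inside $U$ whereas the verification above was carried out in $S$. This causes no difficulty: because $U$ is a subsemigroup of $S$ we have $U^1 \subseteq S^1$, so the defining condition of $\mathcal{R}^*$ in \eqref{r8}, being universally quantified over the larger set $S^1$, is at least as strong when read in $S$ as when read in $U$. Consequently $u\,\mathcal{R}^*\,e$ in $S$ forces $u\,\mathcal{R}^*\,e$ in $U$. Since $e$ is an idempotent of $U$ lying in the $\mathcal{R}^*$-class of $u$, and $u \in U$ was arbitrary, every $\mathcal{R}^*$-class of $U$ contains an idempotent, so $U$ is right abundant.
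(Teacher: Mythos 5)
Your proof is correct: $e=uu'$ is an idempotent of $U$ with $eu=u$, your two implications establish $u\,\mathcal{R}^*\,e$ in $S$ via \eqref{r8}, and the transfer step is valid since the $\mathcal{R}^*$-condition quantified over $S^1$ subsumes the one quantified over $U^1$, giving $\mathcal{R}^*(S)\cap(U\times U)\subseteq\mathcal{R}^*(U)$. The paper itself states this lemma as an external citation to \cite{quasi} without reproducing a proof, and your argument is precisely the standard one given there, so nothing further is needed.
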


 \begin{lemma} [\cite{quasi}, Lemma 3.1.9.]  \label{inv2}  Let $U$ be a right inverse ideal of a semigroup $S$. Then $\mathcal{R}^{*}( U) = \mathcal{R}(S) \cap(U \times U)$.
 \end{lemma}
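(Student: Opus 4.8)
The plan is to establish the two inclusions separately, using only standard properties of $\mathcal{R}$ and $\mathcal{R}^*$ for one of them and reserving the right inverse ideal hypothesis for the other. Throughout I write $\mathcal{R}^*(U)$ for the starred relation computed inside $U$, so that the quantifiers in \eqref{r8} range over $U^1$, and $\mathcal{R}(S)$ for the ordinary Green relation in $S$.

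First I would dispose of the inclusion $\mathcal{R}(S)\cap(U\times U)\subseteq \mathcal{R}^*(U)$, which in fact holds for every subsemigroup. Suppose $a,b\in U$ with $a\,\mathcal{R}(S)\,b$; then $aS^1=bS^1$, so $a=bs$ and $b=at$ for some $s,t\in S^1$. For arbitrary $x,y\in U^1$, right-multiplying $xa=ya$ by $t$ gives $xb=yb$, while right-multiplying $xb=yb$ by $s$ gives $xa=ya$. Hence $xa=ya\iff xb=yb$ for all $x,y\in U^1$, and \eqref{r8} yields $(a,b)\in\mathcal{R}^*(U)$.

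For the reverse inclusion I would first record the structural content of the hypothesis. Given $u\in U$, pick $u^{\prime}\in S$ with $uu^{\prime}u=u$ and $e:=uu^{\prime}\in U$. Then $e^{2}=uu^{\prime}uu^{\prime}=uu^{\prime}=e$ is an idempotent of $U$, $eu=uu^{\prime}u=u$, and from $e=uu^{\prime}\in uS^1$ together with $u=eu\in eS^1$ we get $eS^1=uS^1$, i.e. $e\,\mathcal{R}(S)\,u$; by the inclusion just proved, also $e\,\mathcal{R}^*(U)\,u$. Now take $a,b\in U$ with $(a,b)\in\mathcal{R}^*(U)$, and let $e=aa^{\prime}$ and $f=bb^{\prime}$ be the idempotents of $U$ attached to $a$ and $b$ in this way. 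Since $\mathcal{R}^*(U)$ is an equivalence relation, $e\,\mathcal{R}^*(U)\,a\,\mathcal{R}^*(U)\,b\,\mathcal{R}^*(U)\,f$ gives $e\,\mathcal{R}^*(U)\,f$.

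It then remains to show that two idempotents of $U$ that are $\mathcal{R}^*(U)$-related are $\mathcal{R}(S)$-related, and this is the step I expect to carry the weight of the argument. Substituting $x=e$ and $y=1$ into the defining biconditional \eqref{r8} for $(e,f)\in\mathcal{R}^*(U)$, the equality $xe=ye$ (both sides equal $e$) forces $xf=yf$, i.e. $ef=f$; symmetrically, applying $(f,e)\in\mathcal{R}^*(U)$ with $x=f$, $y=1$ gives $fe=e$. Hence $e=fe\in fS^1$ and $f=ef\in eS^1$, so $eS^1=fS^1$ and $e\,\mathcal{R}(S)\,f$. Chaining $a\,\mathcal{R}(S)\,e\,\mathcal{R}(S)\,f\,\mathcal{R}(S)\,b$ with $a,b\in U$ then gives $(a,b)\in\mathcal{R}(S)\cap(U\times U)$, completing the second inclusion. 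The only genuine subtlety is keeping the two ambient semigroups apart throughout—$\mathcal{R}^*$ is evaluated inside $U$ while $\mathcal{R}$ lives in $S$—and checking that the adjoined identity $1$ may legitimately be substituted for $x$ or $y$ in \eqref{r8}, as it belongs to $U^1$.
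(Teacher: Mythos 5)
Your proof is correct, but there is nothing in the paper to compare it with: the paper does not prove this statement at all, it imports it verbatim from the reference \cite{quasi} (Umar, \emph{Portugaliae Mathematica} 1994, Lemma 3.1.9). So you have supplied a self-contained argument where the authors rely on a citation. Your argument is sound and is essentially the standard one for results of this kind. The easy inclusion $\mathcal{R}(S)\cap(U\times U)\subseteq \mathcal{R}^{*}(U)$ is handled correctly: from $a=bs$, $b=at$ the biconditional in \eqref{r8} follows by right multiplication, and this needs no hypothesis on $U$ beyond its being a subsemigroup. For the hard inclusion you correctly isolate the two facts that the right inverse ideal hypothesis buys: (1) each $u\in U$ has an idempotent $e=uu^{\prime}\in U$ with $eu=u$ and $e\,\mathcal{R}(S)\,u$, which by the easy inclusion is also $\mathcal{R}^{*}(U)$-related to $u$; and (2) for idempotents $e,f\in U$, the substitutions $(x,y)=(e,1)$ and $(x,y)=(f,1)$ in \eqref{r8} yield $ef=f$ and $fe=e$, hence $e\,\mathcal{R}(S)\,f$. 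Chaining through $a\,\mathcal{R}(S)\,e\,\mathcal{R}(S)\,f\,\mathcal{R}(S)\,b$ closes the argument. Your attention to the quantifier domain ($U^{1}$ versus $S^{1}$) and to the legitimacy of substituting the adjoined identity is exactly the right place to be careful, and both checks go through: $e,f\in U\subseteq U^{1}$ and $1\in U^{1}$ by definition. The only cosmetic slip is the phrase ``$e\,\mathcal{R}^{*}(U)\,a\,\mathcal{R}^{*}(U)\,b\,\mathcal{R}^{*}(U)\,f$,'' where the last link should be read via symmetry of $\mathcal{R}^{*}(U)$ from $f\,\mathcal{R}^{*}(U)\,b$; since starred Green's relations are equivalences under the characterization \eqref{r8}, this is harmless.
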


The next result is now at our disposal.
 \begin{theorem}\label{inv} Let \(\mathcal{SS}^{\prime}_{n}\)  be as defined in \eqref{qn2}. Then, for $n\ge 2$, \(\mathcal{SS}^{\prime}_{n}\) is a right inverse ideal of  $\mathcal{P}_{n}$ but not left.
 \end{theorem}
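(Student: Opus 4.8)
The plan is to treat the two assertions separately, constructing an explicit pre-inverse for the positive (right) claim and exhibiting a single obstructing element for the negative (left) claim.

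For the right inverse ideal part, I would take an arbitrary $\alpha \in \mathcal{SS}^{\prime}_{n}$ written as in \eqref{eqq3} and define $\alpha' \in \mathcal{P}_{n}$ to be the partial map with $\dom \alpha' = \im \alpha = \{a_1, \dots, a_p\}$ given by $a_i \alpha' = \min A_i$ for $1 \leq i \leq p$. A direct computation on each kernel class shows $\alpha \alpha' \alpha = \alpha$: if $x \in A_i$ then $x\alpha = a_i$, $a_i \alpha' = \min A_i \in A_i$, and $(\min A_i)\alpha = a_i = x\alpha$. The key point is then to identify the idempotent $\alpha\alpha'$, which sends every class $A_i$ onto $\min A_i$; I would check that this map lies in $\mathcal{SS}^{\prime}_{n}$ by verifying the three defining conditions. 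It is order-decreasing because $\min A_i \leq x$ for every $x \in A_i$; it is isotone because $A_1 < \cdots < A_p$ forces $\min A_1 < \cdots < \min A_p$; and $1 \notin \dom(\alpha\alpha') = \dom \alpha$ since $\alpha \in \mathcal{SS}^{\prime}_{n}$. This exhibits the required $\alpha'$ and proves that $\mathcal{SS}^{\prime}_{n}$ is a right inverse ideal of $\mathcal{P}_{n}$.

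For the failure of the left condition, I would fix the single witness $\alpha$ with $\dom \alpha = \{2\}$ and $2\alpha = 1$, which lies in $\mathcal{SS}^{\prime}_{n}$ for every $n \geq 2$ and satisfies $1 \in \im \alpha$, and argue that no pre-inverse works. The mechanism is a general fact about $\mathcal{P}_{n}$: if $\alpha \alpha' \alpha = \alpha$ then $e := \alpha'\alpha$ is idempotent, and one shows $\im e = \im \alpha$, where the inclusion $\im e \subseteq \im \alpha$ is immediate and the reverse follows from $\alpha = \alpha e$, which gives $\im \alpha = \im(\alpha e) \subseteq \im e$. Since any idempotent $e$ of $\mathcal{P}_{n}$ satisfies $\im e \subseteq \dom e$ and fixes every point of its image, the membership $1 \in \im \alpha = \im e \subseteq \dom e$ forces $1 \in \dom e$, so $e = \alpha'\alpha \notin \mathcal{SS}^{\prime}_{n}$. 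As this holds for every $\alpha'$ with $\alpha\alpha'\alpha = \alpha$, the element $\alpha$ witnesses that $\mathcal{SS}^{\prime}_{n}$ is not a left inverse ideal.

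The routine direction is the right-ideal construction; the part demanding the most care is the left-ideal failure, where the argument must hold for every pre-inverse $\alpha'$ simultaneously rather than for one convenient choice. The crux is the structural observation that the image of $\alpha'\alpha$ is forced to equal $\im \alpha$ and that an idempotent of $\mathcal{P}_{n}$ necessarily contains its image in its domain; once these are in place the contradiction with $1 \notin \dom$ is immediate, and the same single witness $\alpha$ handles all $n \geq 2$ at once.
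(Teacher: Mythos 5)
Your proof of the right-inverse-ideal half is essentially the paper's own argument: the same pre-inverse $\alpha'$ defined by $a_i\alpha' = \min A_i$, the same kernel-class verification that $\alpha\alpha'\alpha = \alpha$, and the same identification of $\alpha\alpha'$ as the idempotent collapsing each class $A_i$ onto $\min A_i$, which lies in $\mathcal{SS}^{\prime}_{n}$. Where you genuinely diverge is the negative half. The paper computes $\alpha'\alpha = 1_{\im\,\alpha}$ for its one particular choice of $\alpha'$ and remarks that this can fail to lie in $\mathcal{SS}^{\prime}_{n}$ (e.g.\ when $a_1 = 1$); strictly speaking that does not refute the left-inverse-ideal property, because the definition only requires that \emph{some} pre-inverse $u'$ satisfy $u'u \in U$, not that one convenient choice does. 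Your argument closes this quantifier gap: fixing the witness $\alpha$ with $\dom\,\alpha = \{2\}$ and $2\alpha = 1$, you show that \emph{every} $\alpha'$ with $\alpha\alpha'\alpha = \alpha$ produces an idempotent $e = \alpha'\alpha$ whose image equals $\im\,\alpha \ni 1$, and since an idempotent of $\mathcal{P}_{n}$ contains its image in its domain and fixes it pointwise, $1 \in \dom\,e$, so $e \notin \mathcal{SS}^{\prime}_{n}$. The three structural facts you invoke ($e^2 = e$, $\im\,e = \im\,\alpha$ via $\alpha = \alpha e$, and $\im\,e \subseteq \dom\,e$) are all correct for partial maps. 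So your proposal is correct and is in fact the more rigorous of the two: the paper's approach buys brevity, while yours delivers a complete refutation valid against all pre-inverses simultaneously.
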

 \begin{proof} Let $\alpha\in \mathcal{SS}_{n}^{\prime}$ be as expressed in \eqref{1}, and let $t_{i}=\min A_{i}$ for all $1\leq i\leq p$. Now define $\alpha^{\prime}$ as: \[\alpha^{\prime}=\begin{pmatrix}
a_1  & \cdots & a_p\\
t_1   & \cdots & t_p
\end{pmatrix} .\]
\noindent Clearly, $\alpha^{\prime}$ is in $\mathcal{P}_{n}$. Notice that:

\begin{align*}\alpha\alpha^{\prime}\alpha &=\begin{pmatrix}
A_1  & \cdots & A_p\\
a_1   & \cdots & a_p
\end{pmatrix}\begin{pmatrix}
a_1  & \cdots & a_p\\
t_1   & \cdots & t_p
\end{pmatrix}\begin{pmatrix}
A_1  & \cdots & A_p\\
a_1   & \cdots & a_p
\end{pmatrix}\\&= \begin{pmatrix}
A_1  & \cdots & A_p\\
a_1   & \cdots & a_p
\end{pmatrix}=\alpha. \end{align*}

Moreover, \[\alpha\alpha^{\prime}=\begin{pmatrix}
A_1  & \cdots & A_p\\
a_1   & \cdots & a_p
\end{pmatrix}\begin{pmatrix}
a_1  & \cdots & a_p\\
t_1   & \cdots & t_p
\end{pmatrix}
=\begin{pmatrix}
A_1  & \cdots & A_p\\
t_1   & \cdots & t_p
\end{pmatrix}\in E(\mathcal{SS}_{n}^{\prime})\subset \mathcal{SS}^{\prime}_{n}.\]

\noindent However, \[\alpha^{\prime}\alpha=\begin{pmatrix}
a_1  & \cdots & a_p\\
t_1   & \cdots & t_p
\end{pmatrix}\begin{pmatrix}
A_1  & \cdots & A_p\\
a_1   & \cdots & a_p
\end{pmatrix}=\begin{pmatrix}
a_1  & \cdots & a_p\\
a_1   & \cdots & a_p
\end{pmatrix}=\text{1}_{\im \, \alpha},\]\noindent which is not necessary in \(\mathcal{SS}^{\prime}_{n}\), for example its possible $a_{1}=1$.

  Thus,  $\mathcal{SS}_{n}^{\prime}$ is a right inverse ideal of $\mathcal{P}_{n}$, but not left.
 \end{proof}

 Consequently, we have the following result.

\begin{theorem}
	Let \(\mathcal{SS}^{\prime}_{n}\)  be as defined in \eqref{qn2}. Then for $n\ge 2$, \(\mathcal{SS}^{\prime}_{n}\) is  right abundant.
\end{theorem}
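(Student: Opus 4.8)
The plan is to derive this as an immediate consequence of the two results established just above, namely Theorem~\ref{inv} and Lemma~\ref{inv1}. Theorem~\ref{inv} already shows that, for $n \ge 2$, the semigroup $\mathcal{SS}^{\prime}_{n}$ is a right inverse ideal of the partial symmetric monoid $\mathcal{P}_{n}$: for every $\alpha \in \mathcal{SS}^{\prime}_{n}$ the map $\alpha^{\prime}$ constructed there satisfies $\alpha\alpha^{\prime}\alpha = \alpha$ with $\alpha\alpha^{\prime} \in E(\mathcal{SS}^{\prime}_{n}) \subset \mathcal{SS}^{\prime}_{n}$. Lemma~\ref{inv1} states that any right inverse ideal of a semigroup is right abundant. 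First I would simply invoke these two facts in sequence to conclude that $\mathcal{SS}^{\prime}_{n}$ is right abundant.

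For a reader who prefers a self-contained verification, I would unwind the definition of right abundance, which requires every $\mathcal{R}^{*}$-class of $\mathcal{SS}^{\prime}_{n}$ to contain an idempotent. Fix $\alpha \in \mathcal{SS}^{\prime}_{n}$ and let $\alpha^{\prime}$ be as in Theorem~\ref{inv}. Then $e = \alpha\alpha^{\prime}$ is an idempotent lying in $\mathcal{SS}^{\prime}_{n}$, and in $\mathcal{P}_{n}$ one checks directly that $\alpha \mathcal{R} e$: indeed $e = \alpha\alpha^{\prime} \in \alpha\mathcal{P}_{n}$, while $\alpha = e\alpha \in e\mathcal{P}_{n}$ (using that $\mathcal{P}_{n}$ is a monoid), so $\alpha$ and $e$ generate the same principal right ideal of $\mathcal{P}_{n}$. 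By Lemma~\ref{inv2}, $\mathcal{R}^{*}$ on $\mathcal{SS}^{\prime}_{n}$ is precisely the restriction of $\mathcal{R}$ on $\mathcal{P}_{n}$ to $\mathcal{SS}^{\prime}_{n} \times \mathcal{SS}^{\prime}_{n}$, so $\alpha \mathcal{R}^{*} e$ holds in $\mathcal{SS}^{\prime}_{n}$. Hence the $\mathcal{R}^{*}$-class of $\alpha$ contains the idempotent $e$.

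Since $\alpha$ was arbitrary, every $\mathcal{R}^{*}$-class meets $E(\mathcal{SS}^{\prime}_{n})$, which is exactly the assertion that $\mathcal{SS}^{\prime}_{n}$ is right abundant. There is no genuine obstacle here: the entire content of the theorem has been front-loaded into Theorem~\ref{inv} (the construction of the right inverse $\alpha^{\prime}$ together with the check that $\alpha\alpha^{\prime}$ is an idempotent \emph{inside} the semigroup) and into the abstract Lemmas~\ref{inv1} and~\ref{inv2}. The only point to be careful about is that the idempotent $\alpha\alpha^{\prime}$ genuinely belongs to $\mathcal{SS}^{\prime}_{n}$ rather than merely to $\mathcal{P}_{n}$, which is precisely what the ``right'' (as opposed to ``left'') half of Theorem~\ref{inv} guarantees; the failure of the left-hand analogue $\alpha^{\prime}\alpha \in \mathcal{SS}^{\prime}_{n}$ is exactly what will later obstruct left abundance.
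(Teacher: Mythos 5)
Your proposal is correct and follows exactly the paper's own argument: the paper's proof is precisely the one-line invocation of Theorem~\ref{inv} together with Lemma~\ref{inv1}. Your additional unwinding via Lemma~\ref{inv2} (showing $\alpha \,\mathcal{R}\, \alpha\alpha^{\prime}$ in $\mathcal{P}_{n}$ and restricting to $\mathcal{SS}^{\prime}_{n}$) is a sound elaboration of the same route, not a different one.
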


\begin{proof}
	 The result follows from Theorem \ref{inv}  and Lemma \ref{inv1}.
\end{proof}

 \begin{theorem} \label{a1}
Let \(\mathcal{SS}^{\prime}_{n}\)  be as defined in \eqref{qn2}, then  for $\alpha, \beta\in S$ we have:
 \begin{itemize}
   \item[(i)] $\alpha\mathcal{L}^*\beta$  if and only if $\im  \alpha = \im  \beta$;
   \item[(ii)] $\alpha\mathcal{R}^*\beta$ if and only if $\ker  \alpha = \ker  \beta$;
   \item[(iii)] $\alpha\mathcal{H}^*\beta$ if and only if $\alpha=\beta$;
   \item[(iv)] $\alpha\mathcal{D}^*\beta$ if and only if $|\im  \alpha| = |\im   \beta|$.
 \end{itemize}
 \end{theorem}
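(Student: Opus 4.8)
The plan is to characterize the four starred relations by exploiting the structural results already established, rather than verifying the defining conditions \eqref{l8} and \eqref{r8} from scratch. The key observation is that Theorem \ref{inv} provides a right inverse ideal structure, and Lemma \ref{inv2} converts the starred relation $\mathcal{R}^{*}$ on $\mathcal{SS}_{n}^{\prime}$ into the ordinary relation $\mathcal{R}$ computed in the ambient semigroup $\mathcal{P}_{n}$. For part (ii), I would invoke Lemma \ref{inv2} to write $\mathcal{R}^{*}(\mathcal{SS}_{n}^{\prime}) = \mathcal{R}(\mathcal{P}_{n}) \cap (\mathcal{SS}_{n}^{\prime} \times \mathcal{SS}_{n}^{\prime})$, and then recall the standard fact that in $\mathcal{P}_{n}$ (indeed in any partial transformation monoid) the Green relation $\mathcal{R}$ is governed by equality of kernels: $\alpha \mathcal{R} \beta$ in $\mathcal{P}_{n}$ if and only if $\ker \alpha = \ker \beta$. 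This immediately yields the stated characterization.

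For part (i), the cleanest route is to verify directly from \eqref{l8} that $\im \alpha = \im \beta$ forces $\mathcal{L}^{*}$-equivalence and conversely. The reverse implication is the routine direction: if $\im \alpha = \im \beta$, then since the image set determines whether post-composition by arbitrary $x, y \in S^{1}$ separates or identifies elements, the biconditional $\alpha x = \alpha y \iff \beta x = \beta y$ holds. For the forward direction, I would argue contrapositively: if the images differ, one can construct a separating pair $x, y \in \mathcal{P}_{n}^{1}$ (or within $S^{1}$) that distinguishes $\alpha$ from $\beta$. Here care is needed because $\mathcal{SS}_{n}^{\prime}$ lacks an identity and its elements are constrained (order-decreasing, isotone, domain avoiding $1$), so I expect this to be the main obstacle: the test maps $x, y$ used to separate images must themselves lie in $S^{1}$, and producing admissible separating maps within the restricted semigroup is the delicate point. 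An alternative, likely smoother, is to observe that $\mathcal{L}^{*}$ is characterized in $\mathcal{P}_{n}$ by equality of images, and to transfer this using the fact that $\mathcal{SS}_{n}^{\prime}$ embeds appropriately, though the lack of a dual to Lemma \ref{inv2} (since $\mathcal{SS}_{n}^{\prime}$ is not a left inverse ideal) means this transfer requires a separate justification.

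Parts (iii) and (iv) then follow formally. For (iii), since $\mathcal{H}^{*} = \mathcal{L}^{*} \cap \mathcal{R}^{*}$, combining (i) and (ii) gives $\alpha \mathcal{H}^{*} \beta$ if and only if $\im \alpha = \im \beta$ and $\ker \alpha = \ker \beta$; I would then note that for order-decreasing isotone maps, equality of both kernel and image forces $\alpha = \beta$, because the linear ordering of $\textbf{Ker }\alpha$ together with the isotone condition $a_{1} < a_{2} < \dots < a_{p}$ pins down the correspondence between kernel classes and image points uniquely. For (iv), I would use that $\mathcal{D}^{*}$ is the join of $\mathcal{L}^{*}$ and $\mathcal{R}^{*}$; one inclusion is immediate since equal images (hence equal image \emph{sizes}) already give $\mathcal{L}^{*}$-equivalence, and conversely, given $|\im \alpha| = |\im \beta| = p$, I would exhibit a chain connecting $\alpha$ to $\beta$ through $\mathcal{L}^{*}$- and $\mathcal{R}^{*}$-steps by matching kernel structures and images at a common height. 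The main work across all four parts is concentrated in the forward direction of (i); everything else is bookkeeping built on the earlier theorems.
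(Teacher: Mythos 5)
Your treatment of (ii), (iii) and (iv) is essentially the paper's own: (ii) is exactly Theorem \ref{inv} combined with Lemma \ref{inv2} and the description of $\mathcal{R}$ on $\mathcal{P}_{n}$ by kernels; (iii) is obtained, as in the paper, by intersecting (i) and (ii) and using isotonicity to pin the map down; and for (iv) the paper does precisely what you outline, checking the forward inclusion via a chain of $\mathcal{L}^{*}$- and $\mathcal{R}^{*}$-steps (each of which preserves $|\im \, \cdot \,|$ by (i) and (ii)) and, conversely, writing down explicit intermediates $\delta,\gamma\in\mathcal{SS}^{\prime}_{n}$ with $\ker \, \delta=\ker \, \alpha$, $\im \, \delta=\im \, \gamma=\{1,\dots,p\}$, $\ker \, \gamma=\ker \, \beta$, so that $\alpha \, \mathcal{R}^{*} \, \delta \, \mathcal{L}^{*} \, \gamma \, \mathcal{R}^{*} \, \beta$. (One does need to check these intermediates are order-decreasing and avoid $1$ in their domains, but that is routine.)

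The genuine gap is the forward direction of (i), which you correctly identify as the crux and then leave open. Neither of your two suggested routes closes it. The ``transfer from $\mathcal{P}_{n}$'' cannot work as stated: for an arbitrary subsemigroup $U$ of a semigroup $S$ one only has $\mathcal{L}^{*}(S)\cap(U\times U)\subseteq\mathcal{L}^{*}(U)$, because the $\mathcal{L}^{*}(U)$-condition quantifies over fewer test elements; the reverse inclusion is exactly what the forward direction of (i) asserts, and the absence of a left analogue of Lemma \ref{inv2} (by Theorem \ref{inv}, $\mathcal{SS}^{\prime}_{n}$ is not a left inverse ideal of $\mathcal{P}_{n}$, and by Lemma \ref{qna2} it is not even left abundant) is precisely why this inclusion must be proved by hand. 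So one must actually produce separating test maps inside $(\mathcal{SS}^{\prime}_{n})^{1}$, and this is where the structure of $\mathcal{SS}^{\prime}_{n}$ bites. Using the observation that $\alpha x=\alpha y$ holds iff $x$ and $y$ agree on $\im \, \alpha$ as partial maps, suppose $c\in\im \, \beta\setminus\im \, \alpha$. If $c\neq 1$, take $x=1_{\{2,\dots,n\}}$ and $y=1_{\{2,\dots,n\}\setminus\{c\}}$: both lie in $\mathcal{SS}^{\prime}_{n}$, both act on $\im \, \alpha$ as the partial identity on $\im \, \alpha\setminus\{1\}$ (since $c\notin\im \, \alpha$), so $\alpha x=\alpha y$, while $\beta x\neq\beta y$ because $c\in\im \, \beta$ lies in $\dom \, x$ but not in $\dom \, y$. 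If $c=1$ --- the case your tools cannot see, since no element of $\mathcal{SS}^{\prime}_{n}$ has $1$ in its domain --- take $x=1_{[n]}\in(\mathcal{SS}^{\prime}_{n})^{1}$ and $y=1_{\{2,\dots,n\}}$: then $\alpha x=\alpha=\alpha y$ because $1\notin\im \, \alpha$, whereas $\beta y$ is undefined at the points that $\beta$ sends to $1$, so $\beta x=\beta\neq\beta y$. Either way \eqref{l8} is violated, proving the contrapositive. This case analysis, and in particular the special role of the point $1$, which only the adjoined identity can detect, is the substantive content of (i) and is absent from your proposal. (In fairness, the paper's printed argument for this direction is itself defective: its test element $\gamma_{1}$ is taken to be $1_{[n]}$, which renders the displayed biconditional vacuous; the intended argument is the partial-identity one just given.)
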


\begin{proof}
\begin{itemize} \item[(i)] Let $\alpha, \beta \in \mathcal{SS}^{\prime}_{n}$  be as expressed in \eqref{eqq3}  such that $\alpha\mathcal{L^*}\beta$ and $\im  \alpha = \{a_1,a_2,\dots,a_p\}$. Further, let $\gamma_{1}$ = $\begin{pmatrix}
{1} &  \cdots & n\\
1   & \cdots & n
\end{pmatrix}$.  Then clearly $\gamma_{1}=1_{[n]}\in (\mathcal{SS}^{\prime}_{n})^{1}= \mathcal{SS}^{\prime}_{n}\cup \{1_{[n]}\}$ and by \eqref{l8}, we see that:

$$\alpha\gamma_{1} = \alpha 1_{[n]}
\iff \beta\gamma_{1} = \beta1_{[n]}$$
\noindent which implies that  $\im \, \beta \subseteq \{a_1,a_2, \dots,a_p\} = \im \, \alpha$ or  $\im \, \beta = \varnothing \subseteq \{a_1,\dots, a_p\} = \im \, \beta$,  i.e.,  $\im \, \beta \subseteq \im \, \alpha$.

 Similarly, let $\im \, \beta = \{b_1,b_2, \dots,b_p\}$   and define   $\gamma_{2} = \begin{pmatrix}
	1 &  \cdots & n\\
	1 &  \cdots & n
\end{pmatrix}=1_{[n]}$. Then clearly $\gamma_{2} \in (\mathcal{SS}^{\prime}_{n})^{1}$ and by  \eqref{l8}, we see that:

$$\alpha\gamma_{2} = \alpha 1_{[n]}
\iff \beta\gamma_{2} = \beta1_{[n]}$$
\noindent which implies that  $\im \, \alpha \subseteq \{b_1,b_2, \dots,b_p\} = \im \, \beta$  or $\im \, \alpha = \varnothing \subseteq \{b_1,b_2,\dots, b_p\} = \im \, \beta$. i.e.,  $\im \, \alpha \subseteq \im \, \beta$, as required.

Conversely, suppose $\im  \, \alpha = \im \, \beta.$ Then by [\cite{howi}, Exercise 2.6.17] $\alpha \mathcal{L}^{\mathcal{P}_n} \beta$, and it follows from definition that $\alpha \mathcal{L^*} \beta$.

\item[(ii)] The result follows from Theorem \ref{inv}, Lemma \ref{inv2} and [\cite{howi}, Exercise 2.6.17], while (iii) follows from (i) and (ii) and the fact that $\alpha$ and $\beta$ are both isotone maps.
\item[(iv)] Suppose $\alpha\mathcal{D}^{*}\beta$. Then by (\cite{howi}, Proposition 1.5.11), it means that there exist elements $\delta_{1},~\delta_{2}, \dots,~\delta_{2n-1}$ in $\mathcal{SS}^{\prime}_{n}$ such that $\alpha\mathcal{L}^{*}\delta_{1}$, $\delta_{1}\mathcal{R}^{*}\delta_{2}$, $\delta_{2}\mathcal{L}^{*}\delta_{3},\dots,$ $\delta_{2n-1}\mathcal{R}^{*}\beta$ for some $n\in ~ \mathbb{{N}}$. Consequently, from  (i) and (ii), we can deduce that $\im~\alpha=\im~\delta_{1}$, ${\ker}~\delta_{1}={\ker}~\delta_{2}$, $\im~\delta_{2}=\im~\delta_{3},\dots,$ $\ker~\delta_{2n-1}=\ker~\beta$. Now it follows that $|\im~\alpha|=|\im~\delta_{1}|=|\dom~\delta_{1}/ \ker~\delta_{1}|=|\dom~\delta_{2}/ \ker~\delta_{2}|=\dots=|\dom~\delta_{2n-1}/ \ker~\delta_{2n-1}|=|\dom~\beta/ \ker~\beta|=|\im~\beta|.$

Conversely, suppose that $|\im~\alpha|=|\im~\beta|$ where \begin{equation*}\label{2} \alpha=\left(\begin{array}{ccc}
                                                                            A_{1}  & \cdots & A_{p} \\
                                                                            a_{1} & \cdots & a_{p}
                                                                          \end{array}
\right)\text{ and } \beta=\left(\begin{array}{ccc}
                                                                            B_{1}  & \cdots & B_{p} \\
                                                                            b_{1} & \cdots & b_{p}
                                                                          \end{array}
\right).\end{equation*}

Now define \begin{equation*}\label{2} \delta=\left(\begin{array}{ccc}
                                                                            A_{1}  & \cdots & A_{p} \\
                                                                            {1} & \cdots & {p}
                                                                          \end{array}
\right)\text{ and } \gamma=\left(\begin{array}{ccc}
                                                                            B_{1}  & \cdots & B_{p} \\
                                                                            {1} & \cdots & {p}
                                                                          \end{array}
\right).\end{equation*}

\noindent Clearly, $\delta$ and $\gamma$ are in $\mathcal{SS}^{\prime}_{n}$. Notice that $\ker \, \alpha= \ker \, \delta$, $\im \, \delta=\im \, \gamma$ and $\ker \, \gamma=\ker \, \beta$. Thus by (i) and (ii) we see that $\alpha \mathcal{R}^{*} \delta \mathcal{L}^{*} \gamma \mathcal{R}^{*} \beta$.

 \noindent Similarly,  define $\delta=\left(\begin{array}{ccc}
                                                                            n-p+{1}  & \cdots & n \\
                                                                            a_{1} & \cdots & a_{p}
                                                                          \end{array}
\right)$ and  $\gamma=\left(\begin{array}{ccc}
                                                                            n-p+1  & \cdots & n \\
                                                                            b_{1} & \cdots & b_{p}
                                                                          \end{array}
\right)$. Clearly, $\delta$ and $\gamma\in \mathcal{SS}^{\prime}_{n}$. Moreover,  notice that $\im \, \alpha=\im  \, \delta$,   $\ker \, \delta= \ker \, \gamma$,  $\im \, \gamma=\im \,  \beta$.

 Thus by (i) and (ii) we have $\alpha \mathcal{L}^{*} \delta \mathcal{R}^{*} \gamma \mathcal{L}^{*}\beta$. Hence, by (\cite{howi}, Proposition 1.5.11) it follows that $\alpha\mathcal{D}^{*}\beta$.  The proof is now complete.
\end{itemize}
\end{proof}

\begin{lemma}\label{uaaaa} On the Schr\"{o}der semigroup  $\mathcal{SS}^{\prime}_{n}$  \emph{(}$n\geq 4$\emph{)}, we have $\mathcal{D}^{*}=\mathcal{R}^{*}\circ\mathcal{L}^{*}\circ\mathcal{R}^{*}=\mathcal{L}^{*}\circ\mathcal{R}^{*}\circ\mathcal{L}^{*}$ .
\end{lemma}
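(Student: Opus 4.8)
The plan is to read off both equalities directly from the explicit ``connecting'' elements already constructed in the converse part of the proof of Theorem \ref{a1}(iv), so that no new machinery is needed. I first dispose of the easy inclusions. Since $\mathcal{L}^{*}\subseteq\mathcal{D}^{*}$ and $\mathcal{R}^{*}\subseteq\mathcal{D}^{*}$, and since $\mathcal{D}^{*}$ is (by definition) an equivalence relation, hence transitive, it contains every finite composite of copies of $\mathcal{L}^{*}$ and $\mathcal{R}^{*}$. In particular $\mathcal{R}^{*}\circ\mathcal{L}^{*}\circ\mathcal{R}^{*}\subseteq\mathcal{D}^{*}$ and $\mathcal{L}^{*}\circ\mathcal{R}^{*}\circ\mathcal{L}^{*}\subseteq\mathcal{D}^{*}$. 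Thus the whole content lies in the reverse inclusions: a $\mathcal{D}^{*}$-related pair must be joinable by a chain of length exactly three in each of the two prescribed orders.

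For the reverse inclusions I would take $\alpha,\beta\in\mathcal{SS}^{\prime}_{n}$ with $\alpha\,\mathcal{D}^{*}\,\beta$. By Theorem \ref{a1}(iv) this means $|\im\,\alpha|=|\im\,\beta|=p$ for some $1\le p\le n-1$. I then reuse verbatim the two constructions from the proof of Theorem \ref{a1}(iv). The first exhibits $\delta,\gamma\in\mathcal{SS}^{\prime}_{n}$ (with domains the kernel classes of $\alpha,\beta$ mapped onto $1,\dots,p$) satisfying $\ker\,\alpha=\ker\,\delta$, $\im\,\delta=\im\,\gamma$ and $\ker\,\gamma=\ker\,\beta$; by parts (i) and (ii) of that theorem this gives $\alpha\,\mathcal{R}^{*}\,\delta\,\mathcal{L}^{*}\,\gamma\,\mathcal{R}^{*}\,\beta$, i.e.\ $(\alpha,\beta)\in\mathcal{R}^{*}\circ\mathcal{L}^{*}\circ\mathcal{R}^{*}$. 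The second exhibits $\delta,\gamma\in\mathcal{SS}^{\prime}_{n}$ (with common domain $\{n-p+1,\dots,n\}$ and images $\im\,\alpha$, $\im\,\beta$) satisfying $\im\,\alpha=\im\,\delta$, $\ker\,\delta=\ker\,\gamma$ and $\im\,\gamma=\im\,\beta$, giving $\alpha\,\mathcal{L}^{*}\,\delta\,\mathcal{R}^{*}\,\gamma\,\mathcal{L}^{*}\,\beta$, i.e.\ $(\alpha,\beta)\in\mathcal{L}^{*}\circ\mathcal{R}^{*}\circ\mathcal{L}^{*}$. Combining with the trivial inclusions yields both claimed equalities.

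The point deserving care — and what I regard as the genuine content, since $\mathcal{L}^{*}$ and $\mathcal{R}^{*}$ need not commute in a non-regular semigroup, so that a priori $\mathcal{D}^{*}$ might only be reached through arbitrarily long alternating chains — is to confirm that the connecting maps $\delta,\gamma$ really lie in $\mathcal{SS}^{\prime}_{n}$, which is exactly what forces the chain length down to three. This reduces to the three defining conditions. That $1\notin\dom\,\delta,\dom\,\gamma$ and that $\delta,\gamma$ are isotone is immediate from the chosen domains and the linear ordering of $\textnormal{\bf Ker}$. Order-decreasingness is where the bound $p\le n-1$ enters: for the first construction one needs $i\le\min A_{i}$, which follows since $\min A_{1}\ge 2$ (as $1\notin\dom\,\alpha$) and $\min A_{i+1}>\max A_{i}$ force $\min A_{i}\ge i+1$; for the second one needs $a_{i}\le n-p+i$, which holds because $a_{1}<\cdots<a_{p}\le n$ are distinct elements of $[n]$. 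Since these are precisely the elements already validated in the proof of Theorem \ref{a1}(iv) (and the hypothesis $n\ge 4$ only guarantees that the relations are genuinely distinct so that the statement is non-degenerate), I anticipate no further obstacle.
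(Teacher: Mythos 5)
Your proof of the two stated equalities is correct and follows the same route as the paper: the paper likewise obtains $\mathcal{D}^{*}\subseteq\mathcal{R}^{*}\circ\mathcal{L}^{*}\circ\mathcal{R}^{*}$ and $\mathcal{D}^{*}\subseteq\mathcal{L}^{*}\circ\mathcal{R}^{*}\circ\mathcal{L}^{*}$ by invoking the two three-term chains built in the converse half of the proof of Theorem \ref{a1}(iv), and the reverse inclusions are the same transitivity observation you make; your membership checks for the connecting maps ($\min A_{i}\geq i+1$ and $a_{i}\leq n-p+i$) are exactly the right ones. The divergence is in scope: the paper's proof has a second half, which you skip, proving that $\mathcal{L}^{*}\circ\mathcal{R}^{*}\neq\mathcal{R}^{*}\circ\mathcal{L}^{*}$. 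It takes $\alpha$ and $\beta$ to be the partial identities on $\{2,3\}$ and $\{2,4\}$; the map $\delta$ sending $2\mapsto 2$, $4\mapsto 3$ witnesses $\alpha\,\mathcal{L}^{*}\,\delta\,\mathcal{R}^{*}\,\beta$, while any $\gamma$ with $\alpha\,\mathcal{R}^{*}\,\gamma\,\mathcal{L}^{*}\,\beta$ would need $\dom\,\gamma=\{2,3\}$ and $\im\,\gamma=\{2,4\}$, forcing $3\gamma=4>3$, impossible for a decreasing map. This exhibits a pair in $\mathcal{D}^{*}\setminus(\mathcal{R}^{*}\circ\mathcal{L}^{*})$, so $\mathcal{D}^{*}$ properly contains $\mathcal{R}^{*}\circ\mathcal{L}^{*}$ (and, since $\mathcal{D}^{*}$ is symmetric and $(\mathcal{L}^{*}\circ\mathcal{R}^{*})^{-1}=\mathcal{R}^{*}\circ\mathcal{L}^{*}$, it properly contains $\mathcal{L}^{*}\circ\mathcal{R}^{*}$ as well): the three-fold expressions in the lemma cannot be shortened, and this ``necessity'' half is the only place the hypothesis $n\geq 4$ is actually used. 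You assert this distinctness parenthetically (``the hypothesis $n\geq 4$ only guarantees that the relations are genuinely distinct'') but never prove it. For the lemma as literally stated your proof is nonetheless complete, since the equalities themselves hold for every $n\geq 2$ by exactly your argument; just be aware that the counterexample, not the equalities, is what the $n\geq 4$ assumption is for, and it is half of the paper's own proof.
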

\begin{proof} The sufficiency  follows from the converse of the  proof of (iv) in the above theorem, while for the necessity, we have to prove that   $\mathcal{L}^{*}\circ\mathcal{R}^{*}\neq \mathcal{R}^{*}\circ\mathcal{L}^{*}$. Take \[\alpha=\left(\begin{array}{cc}
                                                                            2  &  3 \\
                                                                            {2} &3
                                                                          \end{array}
\right) \text{ and } \beta=\left(\begin{array}{cc}
                                                                            2  &  4 \\
                                                                            {2} &4
                                                                          \end{array}
\right). \]

\noindent Now define $\delta=\left(\begin{array}{cc}
                                                                            2  &  4 \\
                                                                            {2} &3
                                                                          \end{array}
\right).$ Then clearly $\im \, \alpha=\im \, \delta$ and $\dom \, \delta=\dom \, \beta$, and so  $\alpha \mathcal{L}^{*} \delta \mathcal{R}^{*}\beta$. i.e., $(\alpha, \beta)\in \mathcal{L}^{*} \circ \mathcal{R}^{*}$.

On the other hand if we have $(\alpha, \beta)\in  \mathcal{R}^{*} \circ \mathcal{L}^{*}$, then it means there must exist $\gamma \in\mathcal{SS}^{\prime}_{n}$ such that $\alpha \mathcal{R}^{*} \gamma \mathcal{L}^{*}\beta$. However, this means that $\dom \, \alpha= \dom \, \gamma=\{2,3\}$ and $\im \, \gamma=\im \, \beta=\{2,4\}$, which is impossible. The result now follows.
\end{proof}

\begin{lemma}\label{uaaa} On the semigroups  ${RSS}_{n}^{\prime}(p)$ and  $K(n,p)$ ($1\leq p\leq n-1$), we have $\mathcal{D}^{*}=\mathcal{R}^{*}\circ\mathcal{L}^{*}\circ\mathcal{R}^{*}=\mathcal{L}^{*}\circ\mathcal{R}^{*}\circ\mathcal{L}^{*}.$
\end{lemma}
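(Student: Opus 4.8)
The plan is to mirror the structure of Lemma~\ref{uaaaa}, adapting both directions to the two semigroups $RSS^{\prime}_{n}(p)$ and $K(n,p)$ simultaneously. For the sufficiency (i.e., $\mathcal{R}^{*}\circ\mathcal{L}^{*}\circ\mathcal{R}^{*}\subseteq\mathcal{D}^{*}$ and similarly for the other composition), I would simply invoke the general fact that $\mathcal{D}^{*}$ is the join of $\mathcal{L}^{*}$ and $\mathcal{R}^{*}$, so any relation obtained by alternating $\mathcal{L}^{*}$ and $\mathcal{R}^{*}$ is automatically contained in $\mathcal{D}^{*}$; this requires no computation. The substance of the lemma lies in the reverse inclusion, namely that whenever two elements are $\mathcal{D}^{*}$-related they can in fact be connected by a chain of length three of the specified form. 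Since Theorem~\ref{a1}(iv) characterizes $\mathcal{D}^{*}$ by equality of heights, the task reduces to exhibiting, for any two elements $\alpha,\beta$ of the same height $p$, a single intermediate element $\delta$ witnessing $\alpha\,\mathcal{R}^{*}\,\delta\,\mathcal{L}^{*}\,\beta$ or $\alpha\,\mathcal{L}^{*}\,\delta\,\mathcal{R}^{*}\,\beta$, and then noting that the three-fold compositions collapse accordingly.

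Concretely, I would reuse the explicit intermediaries constructed in the converse part of the proof of Theorem~\ref{a1}(iv). Given $\alpha$ with kernel classes $A_1,\dots,A_p$ and $\beta$ with kernel classes $B_1,\dots,B_p$, I would set $\delta=\begin{pmatrix}A_1&\cdots&A_p\\b_1&\cdots&b_p\end{pmatrix}$, which shares the kernel of $\alpha$ (so $\alpha\,\mathcal{R}^{*}\,\delta$ by Theorem~\ref{a1}(ii)) and shares the image of $\beta$ (so $\delta\,\mathcal{L}^{*}\,\beta$ by Theorem~\ref{a1}(i)). The dual intermediary, built by keeping the image of $\alpha$ while adopting the kernel of $\beta$, gives the chain $\alpha\,\mathcal{L}^{*}\,\delta\,\mathcal{R}^{*}\,\beta$. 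The key point I must verify is that these intermediaries genuinely lie in the relevant semigroup: $\delta$ must be isotone, order-decreasing, have $1\notin\dom\delta$, and have height exactly $p$ so that it is nonzero in $RSS^{\prime}_{n}(p)$ and lies in $K(n,p)$. Isotonicity and the decreasing property follow because $\delta$ keeps the domain (hence the minima) of $\alpha$ while only relabelling images by the values $b_1<\cdots<b_p$, which are themselves images of a legitimate element $\beta$; one checks $b_i\le\min B_i\le\min A_i$ when the construction is arranged so that domain minima dominate the assigned values, exactly as in Theorem~\ref{a1}.

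The main obstacle I anticipate is the order-decreasing constraint together with the fixed height requirement in the Rees quotient. Unlike the free situation in $\mathcal{P}_n$, I cannot assign arbitrary image values: I need $b_i\le\min A_i$ for $\delta$ to be decreasing, and this is not automatic since $\beta$'s values were calibrated against $B_i$, not $A_i$. To circumvent this I would choose whichever of the two three-fold compositions keeps the ``wider'' domain blocks paired with small enough images, or, if neither direct intermediary is decreasing, insert an auxiliary element of the maximal standard form $\begin{pmatrix}n-p+1&\cdots&n\\1&\cdots&p\end{pmatrix}$-type used in Theorem~\ref{a1}(iv), whose domain of singletons $\{n-p+1,\dots,n\}$ guarantees the decreasing condition is easiest to satisfy. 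Once a valid intermediary of height $p$ is secured in each direction, the equalities $\mathcal{D}^{*}=\mathcal{R}^{*}\circ\mathcal{L}^{*}\circ\mathcal{R}^{*}=\mathcal{L}^{*}\circ\mathcal{R}^{*}\circ\mathcal{L}^{*}$ follow, since a length-two chain trivially extends to a length-three chain by appending an $\mathcal{R}^{*}$- or $\mathcal{L}^{*}$-step to an element already related to an endpoint. I would conclude by remarking that the argument is uniform for both $RSS^{\prime}_{n}(p)$ and $K(n,p)$ because the constructed intermediaries all have height exactly $p$ and the defining membership conditions are identical in both.
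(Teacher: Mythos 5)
Your framing---trivial containment of the threefold compositions in $\mathcal{D}^{*}$, then reduction via Theorem \ref{a1}(iv) to explicit intermediaries that must be verified to lie in $K(n,p)$, resp.\ to be nonzero in ${RSS}_{n}^{\prime}(p)$---is indeed how the paper proceeds (its proof of this lemma is simply ``same as Lemma \ref{uaaaa}'', whose substance is the converse half of the proof of Theorem \ref{a1}(iv)). However, your primary construction has a genuine flaw. The problem does \emph{not} reduce to finding a single intermediary $\delta$ with $\alpha\,\mathcal{R}^{*}\,\delta\,\mathcal{L}^{*}\,\beta$ or $\alpha\,\mathcal{L}^{*}\,\delta\,\mathcal{R}^{*}\,\beta$: the first requires $b_{i}\le\min A_{i}$ for all $i$, the second requires $a_{i}\le\min B_{i}$ for all $i$, and there are same-height pairs for which both fail. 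For $n\ge 5$ take the idempotents
\[
\alpha=\begin{pmatrix}2&5\\2&5\end{pmatrix},\qquad
\beta=\begin{pmatrix}3&4\\3&4\end{pmatrix},
\]
which lie in $K(n,2)$ and are nonzero in ${RSS}_{n}^{\prime}(2)$: here $b_{1}=3>2=\min A_{1}$ and $a_{2}=5>4=\min B_{2}$, so no length-two chain joins them in either order (isotonicity would force $2\mapsto 3$ or $4\mapsto 5$, violating the decreasing property). Your claimed chain of inequalities $b_{i}\le\min B_{i}\le\min A_{i}$ is unjustified---nothing relates $\min B_{i}$ to $\min A_{i}$---and the impossibility of length-two chains is exactly why the lemma is stated with threefold compositions (compare the counterexample inside the paper's proof of Lemma \ref{uaaaa} showing $\mathcal{L}^{*}\circ\mathcal{R}^{*}\neq\mathcal{R}^{*}\circ\mathcal{L}^{*}$). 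Note also that the intermediaries in the converse part of Theorem \ref{a1}(iv) are not the hybrid you wrote down; they come in \emph{pairs}, with canonical image or canonical domain.

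Consequently, the construction you relegate to a fallback is not an occasional patch but the entire proof, and it works uniformly. For $\alpha,\beta$ of common height $q$ ($q\le p$ in $K(n,p)$; $q=p$ for nonzero elements of ${RSS}_{n}^{\prime}(p)$, the zero being isolated under all of these relations), set
\[
\delta=\begin{pmatrix}A_{1}&\cdots&A_{q}\\1&\cdots&q\end{pmatrix},\qquad
\gamma=\begin{pmatrix}B_{1}&\cdots&B_{q}\\1&\cdots&q\end{pmatrix},
\]
which are isotone, order-decreasing (the kernel classes are ordered subsets of $\{2,\dots,n\}$, so $\min A_{i}\ge i+1$), omit $1$ from their domains, and have height exactly $q$; hence they lie in the semigroup at hand and give $\alpha\,\mathcal{R}^{*}\,\delta\,\mathcal{L}^{*}\,\gamma\,\mathcal{R}^{*}\,\beta$. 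Dually, the maps $\delta^{\prime}$, $\gamma^{\prime}$ with domain $\{n-q+1,\dots,n\}$ and images $\{a_{1},\dots,a_{q}\}$, $\{b_{1},\dots,b_{q}\}$ are legitimate elements (decreasing because $a_{i}\le n-q+i$; the domain avoids $1$ because $q\le n-1$) and give $\alpha\,\mathcal{L}^{*}\,\delta^{\prime}\,\mathcal{R}^{*}\,\gamma^{\prime}\,\mathcal{L}^{*}\,\beta$. Together with the trivial inclusions this yields both equalities; if you promote this to your main argument and drop the single-intermediary reduction, your proof coincides with the paper's.
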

\begin{proof} The proof is the same as  the proof of  the above lemma.
\end{proof}
\begin{lemma}\label{qna2}
	The Schr\"{o}der semigroup  $\mathcal{SS}^{\prime}_{n}$ is not left abundant for all $n \ge 2$.
\end{lemma}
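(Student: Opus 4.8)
The plan is to exploit the characterization of $\mathcal{L}^{*}$ from Theorem \ref{a1}(i), namely that $\alpha\,\mathcal{L}^{*}\,\beta$ holds precisely when $\im\,\alpha=\im\,\beta$. Thus each $\mathcal{L}^{*}$-class is completely determined by its common image set, and to show that $\mathcal{SS}^{\prime}_{n}$ fails to be left abundant it suffices to produce a single image set $X\subseteq[n]$ that is realised by some element of $\mathcal{SS}^{\prime}_{n}$ but by no idempotent of $\mathcal{SS}^{\prime}_{n}$; the $\mathcal{L}^{*}$-class consisting of all maps with image $X$ then contains no idempotent, which is exactly the negation of left abundance.

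The key observation concerns the shape of idempotents. If $\varepsilon\in\mathcal{SS}^{\prime}_{n}$ is idempotent, then, as for any transformation, $\varepsilon$ fixes every point of its image, so $\im\,\varepsilon\subseteq F(\varepsilon)\subseteq\dom\,\varepsilon$. Since every element of $\mathcal{SS}^{\prime}_{n}$ has domain avoiding $1$, we get $1\notin\dom\,\varepsilon$, and hence $1\notin\im\,\varepsilon$ for \emph{every} idempotent $\varepsilon\in\mathcal{SS}^{\prime}_{n}$. In other words, no idempotent of $\mathcal{SS}^{\prime}_{n}$ can have $1$ in its image.

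With this in hand I would take the witness $X=\{1\}$, realised for example by the map $\alpha=\left(\begin{smallmatrix}2\\1\end{smallmatrix}\right)$ sending $2\mapsto 1$. This $\alpha$ lies in $\mathcal{SS}^{\prime}_{n}$ for every $n\geq 2$: its domain $\{2\}$ omits $1$, it is order-decreasing since $1\leq 2$, and it is (trivially) isotone on a singleton domain. By Theorem \ref{a1}(i) the $\mathcal{L}^{*}$-class of $\alpha$ is exactly $\{\beta\in\mathcal{SS}^{\prime}_{n}:\im\,\beta=\{1\}\}$, which is nonempty, yet by the previous paragraph it contains no idempotent. Hence some $\mathcal{L}^{*}$-class of $\mathcal{SS}^{\prime}_{n}$ has no idempotent, and $\mathcal{SS}^{\prime}_{n}$ is not left abundant.

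I do not expect a serious obstacle here: the argument is essentially structural once the $\mathcal{L}^{*}$-classes are indexed by images. The only point requiring care is the standard-but-crucial fact that idempotent transformations fix their image pointwise, which forces $1$ out of the image of any idempotent; everything else is bookkeeping. It is worth noting that this is precisely the asymmetry underlying the earlier results—$\mathcal{SS}^{\prime}_{n}$ is right abundant but not left abundant—since the constraint $1\notin\dom\,\varepsilon$ restricts the possible images (and hence the $\mathcal{L}^{*}$-classes hitting an idempotent) while placing no analogous restriction on kernels (and hence on the $\mathcal{R}^{*}$-classes).
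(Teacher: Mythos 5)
Your proof is correct and follows essentially the same route as the paper: both take the witness $\alpha=\left(\begin{smallmatrix}2\\1\end{smallmatrix}\right)$ and show its $\mathcal{L}^{*}$-class (all maps with image $\{1\}$) contains no idempotent. Your write-up is in fact slightly more complete, since you justify explicitly---via the fact that idempotents fix their image pointwise, forcing $1\notin\im\,\varepsilon$---what the paper dismisses with ``clearly has no idempotent element.''
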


\begin{proof}
	Consider $\alpha = \begin{pmatrix}
		2\\1
	\end{pmatrix}  \in \mathcal{SS}^{\prime}_{n} \ \text{for all } n \ge 2$. Now the $\mathcal{L}^* - \, class$ of $\alpha$ is:
$${L^*}_{\alpha} = \Bigg\{
\begin{pmatrix}
	2\\1
\end{pmatrix}, \begin{pmatrix}
\{2,3\} \\ 1
\end{pmatrix}, \begin{pmatrix}
\{2,3,4\} \\ 1
\end{pmatrix},\dots,\begin{pmatrix}
\{2,3,\dots, n \} \\1
\end{pmatrix}
 \Bigg\}$$

\noindent which clearly has no idempotent element. The result now follows.
\end{proof}

As in \cite{FOUN2}, to give the definition of the relation $\mathcal{J}^{*}$ on a semigroup $S$, we first let the $\mathcal{L}^{*}$-class containing the element $a\in S$ be denoted by $L^{*}_{a}$. Similar notation can be used for the classes of the other relations. A \emph{left} (resp., \emph{right}) $*$-\emph{ideal} of a semigroup $S$ is defined as the \emph{left} (resp., \emph{right}) ideal $I$ of $S$ such that $L^{*}_{a} \subseteq I$ (resp., $R^{*}_{a} \subseteq I$) for all $a \in  I$. A subset $I$ of $S$ is said to be a $*$-ideal of $S$ if it is both a left and a right $*$-ideal of $S$. The \emph{principal $*$-ideal} $J^{*}(a)$ generated by the element $a\in S$ is defined as the intersection of all $*$-ideals of $S$ to which $a$ belongs. The relation $\mathcal{J}^{*}$ is defined as: $a \mathcal{J}^{*} b$ if and only if $J^{*}(a) = J^{*}(b)$, where $J^{*}(a)$ is the principal $*$-ideal generated by $a$.

The upcoming lemma is critical for our further examination of the properties of $\mathcal{J}^{*}$ in $\mathcal{SS}^{\prime}_{n}$.

\begin{lemma}[\cite{FOUN2}, Lemma 1.7]\label{jj}  Let $a$  be an element of a semigroup $S$. Then $b \in J^{*}(a)$ if and only if there are elements $a_{0},a_{1},\dots, a_{n}\in  S$, $x_{1},\dots,x_{n}, y_{1}, \dots,y_{n} \in S^{1}$ such that $a = a_{0}$, $b = a_{n}$, and $(a_{i}, x_{i}a_{i-1}y_{i}) \in \mathcal{D}^{*}$ for $i = 1,\dots,n.$
\end{lemma}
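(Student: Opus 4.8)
The plan is to pin down $J^{*}(a)$ concretely as the set
\[
T(a)=\{\,b\in S:\ \exists\, a_{0},\dots,a_{n}\in S,\ x_{1},\dots,x_{n},y_{1},\dots,y_{n}\in S^{1},\ a_{0}=a,\ a_{n}=b,\ (a_{i},x_{i}a_{i-1}y_{i})\in\mathcal{D}^{*}\ (1\le i\le n)\,\},
\]
and then to prove that $T(a)$ is exactly the smallest $*$-ideal of $S$ containing $a$; since $J^{*}(a)$ is by definition that smallest $*$-ideal, the asserted membership criterion follows at once. The work therefore splits into two containments: first that $T(a)$ is itself a $*$-ideal containing $a$ (giving $J^{*}(a)\subseteq T(a)$), and second that every $*$-ideal $I$ with $a\in I$ satisfies $T(a)\subseteq I$ (giving $T(a)\subseteq J^{*}(a)$).

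For the first containment I would argue as follows. Membership $a\in T(a)$ comes from the reflexivity of $\mathcal{D}^{*}$, taking $n=1$, $x_{1}=y_{1}=1$, $a_{1}=a$. To see that $T(a)$ is a two-sided ideal, I would extend a witnessing chain for $b\in T(a)$ by a single step: for any $s\in S^{1}$ set $a_{n+1}=sb$, $x_{n+1}=s$, $y_{n+1}=1$, so that $x_{n+1}a_{n}y_{n+1}=sb=a_{n+1}$ and $(a_{n+1},a_{n+1})\in\mathcal{D}^{*}$ trivially; the case $bs$ is symmetric. Closure under $\mathcal{L}^{*}$ and $\mathcal{R}^{*}$ follows by the same appending trick: if $c\,\mathcal{L}^{*}\,b$ or $c\,\mathcal{R}^{*}\,b$ with $b\in T(a)$, append $a_{n+1}=c$ with $x_{n+1}=y_{n+1}=1$ and invoke $\mathcal{L}^{*},\mathcal{R}^{*}\subseteq\mathcal{D}^{*}$ to get $(c,b)\in\mathcal{D}^{*}$, so $c\in T(a)$. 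Hence $L^{*}_{b},R^{*}_{b}\subseteq T(a)$ for every $b\in T(a)$, and $T(a)$ is a $*$-ideal.

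For the second containment, let $I$ be any $*$-ideal containing $a$ and let $b\in T(a)$ with a witnessing chain. I would show $a_{i}\in I$ by induction on $i$. The base case $a_{0}=a\in I$ is the hypothesis. For the step, $a_{i-1}\in I$ gives $x_{i}a_{i-1}y_{i}\in I$ because $I$ is an ideal and $x_{i},y_{i}\in S^{1}$; then $(a_{i},x_{i}a_{i-1}y_{i})\in\mathcal{D}^{*}$ forces $a_{i}\in I$. Taking $i=n$ yields $b\in I$, so $T(a)\subseteq I$.

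The hard part is the final implication of the inductive step, namely that a $*$-ideal is automatically closed under $\mathcal{D}^{*}$, not merely under $\mathcal{L}^{*}$ and $\mathcal{R}^{*}$ separately. The point I would emphasize is that $\mathcal{D}^{*}$ is the \emph{join} of $\mathcal{L}^{*}$ and $\mathcal{R}^{*}$, i.e. the transitive closure of $\mathcal{L}^{*}\cup\mathcal{R}^{*}$; thus $a_{i}\,\mathcal{D}^{*}\,(x_{i}a_{i-1}y_{i})$ is witnessed by a finite alternating string of $\mathcal{L}^{*}$- and $\mathcal{R}^{*}$-steps, and membership in $I$ propagates along each individual step since $I$ contains $L^{*}_{c}$ and $R^{*}_{c}$ for all its elements $c$. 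Starting from $x_{i}a_{i-1}y_{i}\in I$ this carries us to $a_{i}\in I$. With both containments in hand, $J^{*}(a)=T(a)$, and the lemma is precisely the description of membership in $T(a)$.
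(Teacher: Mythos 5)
The paper offers no proof of this statement at all: it is quoted verbatim as Lemma 1.7 of Fountain's \emph{Abundant Semigroups} paper, so there is no internal argument to compare against. Your proof is correct and complete, and it is essentially the canonical (Fountain's own) argument: define $T(a)$ as the set of chain-reachable elements, show it is a $*$-ideal containing $a$ by the appending trick (using reflexivity of $\mathcal{D}^{*}$ and $\mathcal{L}^{*},\mathcal{R}^{*}\subseteq\mathcal{D}^{*}$), and show $T(a)$ lies inside every $*$-ideal containing $a$ by induction along the chain. In particular, you correctly identified and handled the one genuinely delicate point, namely that a $*$-ideal is closed under $\mathcal{D}^{*}$ because $\mathcal{D}^{*}$ is the transitive closure of $\mathcal{L}^{*}\cup\mathcal{R}^{*}$ (not a single composition $\mathcal{L}^{*}\circ\mathcal{R}^{*}$, which need not equal the join here), so membership in the ideal propagates one $\mathcal{L}^{*}$- or $\mathcal{R}^{*}$-step at a time.
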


As in \cite{ua}, we now have the following.

\begin{lemma}\label{jjj} For $\alpha, \, \beta \in \mathcal{SS}^{\prime}_{n}$, let $\alpha\in J^{*}(\beta)$. Then $\mid \im \, \alpha \mid\leq \mid \im \,\beta \mid$.
\end{lemma}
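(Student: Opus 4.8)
The plan is to reduce the claim to a statement about the behaviour of $\mathcal{D}^{*}$ under multiplication, and then invoke Lemma \ref{jj}. By Lemma \ref{jj}, $\alpha \in J^{*}(\beta)$ means there is a chain $\beta = a_{0}, a_{1}, \dots, a_{n} = \alpha$ with each $a_{i}$ related by $\mathcal{D}^{*}$ to some two-sided multiple $x_{i} a_{i-1} y_{i}$, where $x_{i}, y_{i} \in (\mathcal{SS}^{\prime}_{n})^{1}$. Since by Theorem \ref{a1}(iv) two elements of $\mathcal{SS}^{\prime}_{n}$ are $\mathcal{D}^{*}$-related precisely when they have equal height, it follows that $|\im \, a_{i}| = |\im \, (x_{i} a_{i-1} y_{i})|$ for each $i$. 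So the whole argument collapses to showing that height cannot increase under left- or right-multiplication, i.e. $|\im \, (x a y)| \le |\im \, a|$ for all $a$ and all $x, y$ in the monoid $(\mathcal{SS}^{\prime}_{n})^{1}$.

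First I would establish the elementary monotonicity $|\im \, (\mu\nu)| \le \min\{|\im \, \mu|, |\im \, \nu|\}$, valid for any two partial transformations. The key observation is that $\im \, (\mu\nu) = (\im \, \mu \cap \dom \, \nu)\nu \subseteq (\dom \, \nu)\nu = \im \, \nu$, giving $|\im \, (\mu\nu)| \le |\im \, \nu|$; and since $\mu\nu$ is obtained by applying $\nu$ to the image of $\mu$, each image point of $\mu\nu$ arises from at least one image point of $\mu$, so $|\im \, (\mu\nu)| \le |\im \, \mu|$ as well. Applying this twice to the product $x a y$ yields $|\im \, (x a y)| \le |\im \, a|$, with the convention that the adjoined identity $1_{[n]}$ contributes no decrease (composition with it leaves the image unchanged). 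Combining with the $\mathcal{D}^{*}$-equalities above gives $|\im \, a_{i}| = |\im \, (x_{i} a_{i-1} y_{i})| \le |\im \, a_{i-1}|$, so the heights are non-increasing along the chain.

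Chaining these inequalities from $i = 1$ to $i = n$ gives $|\im \, \alpha| = |\im \, a_{n}| \le |\im \, a_{n-1}| \le \cdots \le |\im \, a_{0}| = |\im \, \beta|$, which is exactly the assertion. I would write this out as a short induction on the length $n$ of the chain, or simply as a telescoping argument.

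I do not expect any serious obstacle here; the only point requiring a little care is the handling of the adjoined identity $1_{[n]} \in S^{1}$ and the possibility that an intermediate product is the empty map (height $0$), but neither case breaks the inequality, since multiplying by $1_{[n]}$ preserves the image and the empty map trivially satisfies $0 \le |\im \, a|$. The essential content is the general fact that height is monotone under composition, together with the height-preserving nature of $\mathcal{D}^{*}$ supplied by Theorem \ref{a1}(iv); everything else is bookkeeping through Lemma \ref{jj}.
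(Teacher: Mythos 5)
Your proposal is correct and follows essentially the same route as the paper's own proof: invoke Lemma \ref{jj} to get the chain, use the height characterization of $\mathcal{D}^{*}$ to equate $|\im \, a_{i}|$ with $|\im \, (x_{i}a_{i-1}y_{i})|$, apply monotonicity of image size under composition, and telescope. The only difference is that you spell out the monotonicity fact and cite Theorem \ref{a1}(iv) explicitly (the paper instead points to Lemma \ref{uaaaa} at that step), which if anything makes your write-up slightly more transparent.
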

\begin{proof}
Let $ \alpha \in J^{*}(\beta)$. Then, according to Lemma \ref{jj}, there exist $\beta_{0}, \beta_{1},\dots, \beta_{n}$ $\in\mathcal{SS}^{\prime}_{n}$ , $\gamma_{1}, \dots, \gamma_{n}$, $\tau_{1}, \dots, \tau_{n}$ in $(\mathcal{SS}^{\prime}_{n})^{1}$ such that $\beta=\beta_{0}$,  $\alpha=\beta_{n}$, and $(\beta_{i}, \gamma_{i}\beta_{i-1}\tau_{i})\in \mathcal{D}^{*}$ for $i =1,\dots,n.$ Thus, by Lemma \ref{uaaaa}, this implies that
\[\mid\im \,\beta_{i} \mid= \mid\im \,  \gamma_{i}\beta_{i-1}\tau_{i} \mid\leq \mid\im \, \beta_{i-1} \mid ,\] \noindent so that
\[\mid \im \, \alpha \mid\leq \mid \im \,\beta \mid.\] \noindent The result now follows.
\end{proof}

\begin{lemma}\label{uaaaaa} On the  small Schr\"{o}der semigroup   $\mathcal{SS}^{\prime}_{n}$, we have $\mathcal{J}^{*}=\mathcal{D}^{*}$.
\end{lemma}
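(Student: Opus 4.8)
The plan is to establish the two inclusions $\mathcal{D}^{*}\subseteq \mathcal{J}^{*}$ and $\mathcal{J}^{*}\subseteq \mathcal{D}^{*}$ separately, since together they give the desired equality. The second inclusion is where the specific structure of $\mathcal{SS}^{\prime}_{n}$ enters, via the image-size bound already recorded in Lemma \ref{jjj}.

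For the inclusion $\mathcal{D}^{*}\subseteq \mathcal{J}^{*}$ I would appeal only to the general theory of starred Green's relations, with no appeal to the structure of $\mathcal{SS}^{\prime}_{n}$. Both $\mathcal{L}^{*}$ and $\mathcal{R}^{*}$ are contained in $\mathcal{J}^{*}$, because for any $a$ the classes $L^{*}_{a}$ and $R^{*}_{a}$ lie inside the principal $*$-ideal $J^{*}(a)$. Since $\mathcal{J}^{*}$ is an equivalence relation and $\mathcal{D}^{*}=\mathcal{L}^{*}\vee\mathcal{R}^{*}$ is by definition the least equivalence relation containing both $\mathcal{L}^{*}$ and $\mathcal{R}^{*}$, the containment $\mathcal{D}^{*}\subseteq\mathcal{J}^{*}$ follows immediately.

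For the reverse inclusion $\mathcal{J}^{*}\subseteq\mathcal{D}^{*}$ I would argue by an antisymmetry argument on image sizes. Suppose $\alpha\mathcal{J}^{*}\beta$, that is $J^{*}(\alpha)=J^{*}(\beta)$. Then in particular $\alpha\in J^{*}(\beta)$ and $\beta\in J^{*}(\alpha)$. Applying Lemma \ref{jjj} to each of these two memberships yields $\mid\im\,\alpha\mid\le\mid\im\,\beta\mid$ and $\mid\im\,\beta\mid\le\mid\im\,\alpha\mid$, whence $\mid\im\,\alpha\mid=\mid\im\,\beta\mid$. By Theorem \ref{a1}(iv), equality of image cardinalities is precisely the condition characterising $\mathcal{D}^{*}$, so $\alpha\mathcal{D}^{*}\beta$, as required.

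I do not anticipate any serious obstacle in this argument, since the substantive content has already been isolated into the earlier results. The only genuinely delicate point is the monotonicity encoded in Lemma \ref{jjj}, namely that membership in $J^{*}(\beta)$ cannot strictly increase the image size; that lemma in turn rests on Lemma \ref{uaaaa} (the decomposition of $\mathcal{D}^{*}$) together with the observation that left and right multiplication cannot enlarge the image. Once Lemma \ref{jjj} and Theorem \ref{a1}(iv) are taken as available, as the excerpt permits, the proof reduces to combining them with the standard containment $\mathcal{D}^{*}\subseteq\mathcal{J}^{*}$, and is therefore short.
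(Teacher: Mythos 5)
Your proposal is correct and follows essentially the same route as the paper: the standard containment $\mathcal{D}^{*}\subseteq\mathcal{J}^{*}$, then the antisymmetry argument applying Lemma \ref{jjj} to both $\alpha\in J^{*}(\beta)$ and $\beta\in J^{*}(\alpha)$ to get $|\im\,\alpha|=|\im\,\beta|$, and finally Theorem \ref{a1}(iv) to conclude $\alpha\,\mathcal{D}^{*}\,\beta$. The only difference is that you spell out why $\mathcal{D}^{*}\subseteq\mathcal{J}^{*}$ holds (via the join characterisation), which the paper simply asserts as known.
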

\begin{proof}We only need to show that $\mathcal{J}^{*} \subseteq \mathcal{D}^{*}$ (since $\mathcal{D}^{*} \subseteq \mathcal{J}^{*}$). Suppose $(\alpha,\beta) \in \mathcal{J}^{*}$. Then, $J^{*}(\alpha)=J^{*}(\beta)$, implying $\alpha\in J^{*}(\beta)$ and $\beta\in J^{*}(\alpha)$. By Lemma \ref{jjj}, this implies \[\mid \im \, \alpha \mid = \mid \im \, \beta \mid.\] Therefore, by Theorem \ref{a1} (iv), we have \[\mathcal{J}^{*} \subseteq \mathcal{D}^{*},\] as required.
 %Notice we need to only show that $\mathcal{J}^{*} \subseteq \mathcal{D}^{*}$ (since $\mathcal{D}^{*} \subseteq \mathcal{J}^{*}$). So, suppose that $(\alpha,\beta) \in \mathcal{J}^{*}$, then $J^{*}(\alpha)=J^{*}(\beta)$, so that $\alpha\in J^{*}(\beta)$ and $\beta\in J^{*}(\alpha)$. However, by Lemma \ref{jjj}, this implies that \[\mid \im \, \alpha \mid \leq \mid \im \, \beta \mid \text{ and } \mid \im \, \beta \mid \leq \mid \im \, \alpha \mid,\] \noindent so that $\mid \im \, \alpha \mid= \mid \im \, \beta \mid$. Thus by Lemma \ref{uaaaa}, we have \[\mathcal{J}^{*} \subseteq \mathcal{D}^{*},\]\noindent as required.
\end{proof}

\begin{lemma}\label{un} On the semigroup $S$ in $\{\mathcal{SS}^{\prime}_{n},  \,  K(n,p) \}$, every $\mathcal{R}^{*}-$class  contains a unique idempotent.
\end{lemma}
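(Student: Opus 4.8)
The plan is to split the claim into existence and uniqueness. \emph{Existence} is already in hand: for $\mathcal{SS}^{\prime}_{n}$ we proved right abundance, so by definition every $\mathcal{R}^{*}$-class meets $E(\mathcal{SS}^{\prime}_{n})$; concretely the idempotent is $\alpha\alpha^{\prime}=\begin{pmatrix} A_1 & \cdots & A_p \\ t_1 & \cdots & t_p \end{pmatrix}$ (with $t_i=\min A_i$) from the proof of Theorem \ref{inv}. For $K(n,p)$ I would first observe that the same $\alpha^{\prime}$ works there: since $\alpha\alpha^{\prime}$ has exactly the same height as $\alpha$, it lies in $K(n,p)$ whenever $\alpha$ does, so $K(n,p)$ is likewise a right inverse ideal of $\mathcal{P}_{n}$ and Lemma \ref{inv1} gives its right abundance. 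Thus for both choices of $S$ each $\mathcal{R}^{*}$-class contains at least one idempotent, and the whole content of the lemma is \emph{uniqueness}.

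For uniqueness I would reduce everything to the kernel via Theorem \ref{a1}(ii): two idempotents $\varepsilon,\eta\in S$ lie in the same $\mathcal{R}^{*}$-class precisely when $\ker\varepsilon=\ker\eta$. Hence it suffices to prove that an idempotent of $S$ is completely determined by its kernel. The key step is a characterization of idempotents: writing $\varepsilon=\begin{pmatrix} A_1 & \cdots & A_p \\ a_1 & \cdots & a_p \end{pmatrix}$, I claim $\varepsilon^{2}=\varepsilon$ if and only if $a_i=\min A_i$ for every $i$. Indeed $\varepsilon$ is idempotent iff it fixes each point of its image, i.e. $a_i\varepsilon=a_i$; since $\varepsilon$ is order-decreasing we have $a_i\le\min A_i$, and a fixed point $a_i$ must lie in its own kernel class, forcing $a_i\in A_i$ and therefore $a_i=\min A_i$. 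Conversely, if $a_i=\min A_i$ then every image point is fixed and $\varepsilon$ is idempotent.

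With this in place the conclusion is immediate: if $\varepsilon,\eta$ are idempotents of $S$ with $\ker\varepsilon=\ker\eta$, then the kernel (as a relation) determines both the common domain and its common partition $\{A_1,\dots,A_p\}$, and by the characterization each of $\varepsilon,\eta$ sends $A_i$ to $\min A_i$; hence $\varepsilon=\eta$. The only mildly delicate point---and the step I would be most careful with---is the idempotent characterization, specifically verifying that an image point, being simultaneously a fixed point and order-decreasing, is forced to be the minimum of its kernel class; everything else is bookkeeping with kernels.
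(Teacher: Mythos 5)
Your proposal is correct and follows essentially the same route as the paper: the paper's (very terse) proof is exactly the observation that, by the order-decreasing property, a given kernel $\textnormal{\bf Ker}\,\alpha$ admits exactly one image set making the map idempotent (namely $a_i=\min A_i$), which combined with the characterization $\alpha\,\mathcal{R}^{*}\,\beta \iff \ker\alpha=\ker\beta$ gives the lemma. Your write-up merely makes explicit what the paper leaves implicit — the existence half via right abundance (including the easy extension of the right-inverse-ideal argument to $K(n,p)$) and the $a_i=\min A_i$ characterization — so it is a faithful, fleshed-out version of the paper's argument.
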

\begin{proof} This follows from the fact that  \textbf{Ker }$\alpha$ can only admit one image subset of $[n]$ so that $\alpha$ is an idempotent element of $S$, by the decreasing property of  elements in $S$.
\end{proof}

\begin{remark}\label{remm}\begin{itemize}
             \item[(i)] It is now clear that, for each $1 \leq p \leq n$, the number of $\mathcal{R}^{*}$-classes in $J^{*}_{p} = \{\alpha\in \mathcal{SS}^{\prime}_{n}: \, |\im \, \alpha|=p\}$ is equal to the number of all possible partially ordered partitions of $[n]\setminus \{1\}$ into $p$ parts. This is equivalent to the number of $\mathcal{R}$-classes in $\{\alpha\in \mathcal{OP}_n: \, |\im \, \alpha|=p\}$ \emph{(}i.e., $\sum\limits_{r=p}^{n}{\binom{n}{r}}{\binom{r-1}{p-1}}$, as shown in \emph{[\cite{al3}, Lemma 4.1]}\emph{)}, minus the number of $\mathcal{R}$-classes in $\{\alpha\in \mathcal{SS}_n: \, |\im \, \alpha|=p\}$ \emph{(}i.e., $\sum\limits_{r=p}^{n}{\binom{n-1}{r-1}}{\binom{r-1}{p-1}}$, as shown in \emph{[\cite{zuf}, Lemma 2.20]}\emph{)}. This number is obviously equal to $\sum\limits_{r=p}^{n-1}{\binom{n-1}{r}}{\binom{r-1}{p-1}}$.
             \item[(ii)] If $S\in \{{RSS}_{n}^{\prime}(p),  \,  K(n,p) \}$ then the characterizations of the starred Green's relations in Theorem \ref{a1} also hold in $S$.
           \end{itemize}
\end{remark}

Thus, the semigroup $K(n,p)$, like $\mathcal{SS}^{\prime}_{n}$ is the union of $\mathcal{J}^{*}$ classes \[ J_{o}^{*}, \, J_{1}^{*}, \, \dots, \, J_{p}^{*},\]
where \[J_{p}^{*}=\{\alpha\in K(n,p): \, |\im \, \alpha|=p\}.\]

Furthermore, $K(n,p)$ has $\sum\limits_{r=p}^{n-1}{\binom{n-1}{r}}{\binom{r-1}{p-1}}$ $\mathcal{R}^{*}$-classes and $\binom{n}{p}$ $\mathcal{L}^{*}$-classes in each $J^{*}_{p}$. Consequently, the Rees quotient semigroup ${RLS}_{p}(n)$ has $\sum\limits_{r=p}^{n-1}{\binom{n-1}{r}}{\binom{r-1}{p-1}}+1$ $\mathcal{R}^{*}$-classes and $\binom{n}{p}+1$ $\mathcal{L}^{*}$-classes. (The term 1 is derived from the singleton class containing the zero element in every instance.)

   Before we present the next result, we first note that the number of idempotents in the large and small Schr\"{o}der monoids $\mathcal{LS}_{n}$ and $\mathcal{SS}_{n}$ is found to be $\frac{3^{n}+1}{2}$ (see [\cite{al2}, Proposition 3.5]) and $3^{n-1}$ (see [\cite{zuf}, Theorem 2.22]), respectively. Thus, we present the theorem below.
  \begin{theorem} Let $\mathcal{SS}^{\prime}_{n}$ be as defined in \eqref{qn2}. Then  $|E(\mathcal{SS}^{\prime}_{n})|=\frac{3^{n-1}+1}{2}$.
  \end{theorem}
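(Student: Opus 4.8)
The plan is to avoid a direct enumeration and instead exploit the set-theoretic partition of the large Schr\"{o}der monoid together with the two idempotent counts recalled immediately before the statement. By \eqref{qn1} and \eqref{qn2}, every element of $\mathcal{LS}_{n}$ either has $1$ in its domain or it does not, and these alternatives are mutually exclusive and exhaustive; since $\mathcal{SS}^{\prime}_{n}$ is by definition the complement of $\mathcal{SS}_{n}$ in $\mathcal{LS}_{n}$, we have the disjoint union $\mathcal{LS}_{n} = \mathcal{SS}_{n} \sqcup \mathcal{SS}^{\prime}_{n}$. Idempotency is an intrinsic property of an element (the condition $\alpha\alpha = \alpha$ does not depend on the ambient semigroup), and both $\mathcal{SS}_{n}$ and $\mathcal{SS}^{\prime}_{n}$ are subsemigroups of $\mathcal{LS}_{n}$ in which composition agrees with that of $\mathcal{LS}_{n}$. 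Hence the idempotent sets split along the very same partition, $E(\mathcal{LS}_{n}) = E(\mathcal{SS}_{n}) \sqcup E(\mathcal{SS}^{\prime}_{n})$, so that $|E(\mathcal{SS}^{\prime}_{n})| = |E(\mathcal{LS}_{n})| - |E(\mathcal{SS}_{n})|$.

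It then remains only to substitute the counts $|E(\mathcal{LS}_{n})| = \tfrac{3^{n}+1}{2}$ from [\cite{al2}, Proposition 3.5] and $|E(\mathcal{SS}_{n})| = 3^{n-1}$ from [\cite{zuf}, Theorem 2.22], and to simplify:
\[
|E(\mathcal{SS}^{\prime}_{n})| = \frac{3^{n}+1}{2} - 3^{n-1} = \frac{3^{n}+1-2\cdot 3^{n-1}}{2} = \frac{3^{n-1}+1}{2},
\]
where the last equality uses $3^{n} - 2\cdot 3^{n-1} = 3^{n-1}$. This already yields the claimed value.

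As an independent verification I would also count $E(\mathcal{SS}^{\prime}_{n})$ directly. By $\mathcal{R}$-triviality and the decreasing property (cf.\ Lemma \ref{un}), an element of $\mathcal{SS}^{\prime}_{n}$ is idempotent exactly when every point of $\im\,\alpha$ is fixed; equivalently, each kernel class $A_{i}$ is sent to its own minimum, so that $a_{i} = \min A_{i}$. Such an idempotent is then determined by its domain $D \subseteq \{2,\dots,n\}$ together with a partition of $D$ into blocks of consecutive elements (each block being a kernel class collapsed to its least point). A $k$-element domain admits $2^{k-1}$ such partitions, so the number of idempotents with nonempty domain is $\sum_{k=1}^{n-1}\binom{n-1}{k}2^{k-1} = \tfrac{1}{2}\bigl(3^{n-1}-1\bigr)$; adjoining the empty transformation, which is the (idempotent) zero of $\mathcal{SS}^{\prime}_{n}$, gives $\tfrac{3^{n-1}-1}{2}+1 = \tfrac{3^{n-1}+1}{2}$, in agreement with the computation above.

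There is no serious obstacle here; the only points demanding care are bookkeeping ones. First, one must confirm that $\mathcal{LS}_{n} = \mathcal{SS}_{n} \sqcup \mathcal{SS}^{\prime}_{n}$ is genuinely disjoint and exhaustive, which is immediate from \eqref{qn1}--\eqref{qn2}, so that no idempotent is double-counted or omitted. Second, in the direct count one must remember to include the empty map: it is precisely this term that produces the $+1$ distinguishing $\tfrac{3^{n-1}+1}{2}$ from $\tfrac{3^{n-1}-1}{2}$, and overlooking it is the most likely source of error. I would present the subtraction argument as the main proof, since it rests entirely on results already quoted, and relegate the direct enumeration to a confirming remark.
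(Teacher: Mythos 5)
Your main argument is precisely the paper's proof: the idempotents of $\mathcal{LS}_{n}$ split disjointly along the partition $\mathcal{LS}_{n} = \mathcal{SS}_{n} \sqcup \mathcal{SS}^{\prime}_{n}$, so $|E(\mathcal{SS}^{\prime}_{n})| = |E(\mathcal{LS}_{n})| - |E(\mathcal{SS}_{n})| = \tfrac{3^{n}+1}{2} - 3^{n-1} = \tfrac{3^{n-1}+1}{2}$, exactly as in the paper. The direct enumeration you append (domains $D \subseteq \{2,\dots,n\}$ partitioned into order-convex blocks collapsed to their minima, plus the empty map) is a correct independent check, but it goes beyond what the paper records and is not needed.
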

  \begin{proof} Notice that $E(\mathcal{SS}^{\prime}_{n})\cap E(\mathcal{SS}_{n})=\emptyset$ and $E(\mathcal{SS}^{\prime}_{n})\cup E(\mathcal{SS}_{n})=E(\mathcal{LS}_{n})$. Therefore, the result follows from the fact that $|E(\mathcal{SS}^{\prime}_{n})|= |E(\mathcal{LS}_{n})\setminus E(\mathcal{SS}_{n})|=|E(\mathcal{LS}_{n})|-| E(\mathcal{SS}_{n})|$.
  \end{proof}
We  deduce the following identity.
\begin{corollary}On the semigroup $\mathcal{SS}^{\prime}_{n}$ we have $\sum\limits_{p=0}^{n-1}{\sum\limits_{r=p}^{n-1}{\binom{n-1}{r}}{\binom{r-1}{p-1}}}=\frac{3^{n-1}+1}{2}$.
\end{corollary}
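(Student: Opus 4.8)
The plan is to read the left-hand side as a count of idempotents and then invoke the theorem immediately preceding the corollary. By Lemma~\ref{un}, every $\mathcal{R}^{*}$-class of $\mathcal{SS}^{\prime}_{n}$ contains exactly one idempotent, so the number of idempotents equals the total number of $\mathcal{R}^{*}$-classes. By Remark~\ref{remm}(i), for each $1\le p\le n-1$ the number of $\mathcal{R}^{*}$-classes contained in $J_{p}^{*}$ is $\sum_{r=p}^{n-1}\binom{n-1}{r}\binom{r-1}{p-1}$, while the height-$0$ level $J_{0}^{*}$ consists of the single empty map, which is idempotent. Summing these counts over all admissible $p$ reproduces exactly the left-hand side of the asserted identity, and the preceding theorem evaluates the same quantity (the cardinality of $E(\mathcal{SS}^{\prime}_{n})$) as $\frac{3^{n-1}+1}{2}$. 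Equating the two expressions yields the corollary.

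The one point requiring care is the boundary term $p=0$. The counting formula of Remark~\ref{remm}(i) is stated for $p\ge 1$, whereas the corollary sums from $p=0$; I would record explicitly the convention $\binom{-1}{-1}=1$ and $\binom{r-1}{-1}=0$ for $r\ge 1$, so that the $p=0$ summand is $\sum_{r=0}^{n-1}\binom{n-1}{r}\binom{r-1}{-1}=\binom{n-1}{0}=1$, matching the single height-$0$ idempotent (the empty map). It is precisely this term that produces the additive constant $1$ in $\frac{3^{n-1}+1}{2}$.

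To make the corollary self-contained rather than a mere restatement of the idempotent count, I would also give a direct algebraic verification. Writing $m=n-1$ and interchanging the order of summation over the region $0\le p\le r\le m$, the left-hand side becomes $\sum_{r=0}^{m}\binom{m}{r}\bigl(\sum_{p=0}^{r}\binom{r-1}{p-1}\bigr)$. With the convention above the inner sum equals $1$ when $r=0$ and $\sum_{j=0}^{r-1}\binom{r-1}{j}=2^{r-1}$ when $r\ge 1$. Hence the total is $1+\sum_{r=1}^{m}\binom{m}{r}2^{r-1}=1+\tfrac12\bigl(\sum_{r=0}^{m}\binom{m}{r}2^{r}-1\bigr)=1+\tfrac12(3^{m}-1)=\tfrac{3^{m}+1}{2}$, which is $\frac{3^{n-1}+1}{2}$ as claimed.

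The main obstacle is bookkeeping rather than computation: I must keep the empty (height-$0$) map consistently in the tally, and I must confirm that Remark~\ref{remm}(i) genuinely enumerates $\mathcal{R}^{*}$-classes and not the coarser $\mathcal{R}$-classes. The latter is guaranteed by Lemma~\ref{un} together with the characterization of $\mathcal{R}^{*}$ in Theorem~\ref{a1}(ii). With those two ingredients fixed, both the semigroup-theoretic reading and the interchange-of-summation computation deliver $\frac{3^{n-1}+1}{2}$, and they agree, confirming the identity.
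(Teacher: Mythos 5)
Your proposal is correct, and its first half is precisely the paper's own (implicit) argument: the paper gives no separate proof of this corollary, deducing it from the preceding theorem by counting $E(\mathcal{SS}^{\prime}_{n})$ level by level, with one idempotent per $\mathcal{R}^{*}$-class (Lemma~\ref{un}) and the per-level count of $\mathcal{R}^{*}$-classes from Remark~\ref{remm}(i). You go beyond the paper in two worthwhile respects. First, you pin down the $p=0$ boundary term by recording the conventions $\binom{-1}{-1}=1$ and $\binom{r-1}{-1}=0$ for $r\ge 1$, so that the $p=0$ summand contributes exactly the single height-zero idempotent (the empty map); the paper glosses over this, even though without that term the identity would read $\frac{3^{n-1}-1}{2}$ and be false as stated. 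Second, your interchange-of-summation computation, $\sum_{r=0}^{m}\binom{m}{r}\sum_{p=0}^{r}\binom{r-1}{p-1}=1+\sum_{r=1}^{m}\binom{m}{r}2^{r-1}=\frac{3^{m}+1}{2}$ with $m=n-1$, is an independent, purely combinatorial verification that the paper does not contain; it makes the corollary self-contained rather than dependent on the idempotent count of $\mathcal{LS}_{n}$ and $\mathcal{SS}_{n}$ cited in the preceding theorem, and the agreement of the two computations serves as a consistency check on Remark~\ref{remm}(i) itself.
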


\section{Rank properties}

 For a  semigroup $S$  and a   nonempty subset of $S$ say $A$, the  \emph{smallest subsemigroup} of $S$ that contains $A$ is denoted by $\langle A \rangle$ and is referred to\emph{ as subsemigroup generated by $A$}. If  $A$  is a finite subset of  $S$ such that $\langle A \rangle$ equals $S$, then $S$ is called  a \emph{finitely-generated semigroup}. The \emph{rank} of a finitely generated semigroup $S$ is defined and denoted as:
\[
\text{rank}(S) = \min\{|A| : \langle A \rangle = S\}.
\]
\noindent If the set $A$ consists only of idempotents in $S$, then $S$ is said to be an \emph{idempotent-generated semigroup} (equivalently, a \emph{semiband}), and the idempotent-rank is denoted by $\text{idrank}(S)$. For a more  detailed explanation of the rank properties of a semigroup, we refer the reader to \cite{hrb, hrb2}. There are various classes of transformation semigroups, whose ranks have been investigated, see for example \cite{g1,gu1,gm,gm3,hf,hrb,hrb2,HRS,zm1,zuf}.  In \cite{zuf}, the ranks of the Schr\"{o}der monoids $\mathcal{LS}_{n}$ and $\mathcal{SS}_{n}$, as well as the ranks of their respective certain two-sided ideals and their Rees quotients, have been investigated. In fact, these semigroups have been shown to be idempotent-generated, and their  ranks have been investigated  as well. Our aim is to compute the rank of the two-sided ideal $K(n,p)$ of the Schr\"{o}der semigroup $\mathcal{SS}^{\prime}_{n}$, thereby obtaining the rank of $\mathcal{SS}^{\prime}_{n}$ as a special case. Firstly, we note the following well-known result about decreasing maps \cite{al1, umar}.
\begin{lemma}\label{hqd} For all  order decreasing  partial maps $\alpha$ and $\beta$ on $A\subseteq [n]$, $F(\alpha\beta)=F(\alpha)\cap F(\beta)=F(\beta\alpha)$.
\end{lemma}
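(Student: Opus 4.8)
The plan is to exploit the order-decreasing property through a squeezing inequality, and to reduce the two claimed equalities to one by symmetry. Since the statement is invariant under interchanging $\alpha$ and $\beta$, it suffices to prove $F(\alpha\beta) = F(\alpha) \cap F(\beta)$; running the identical argument with the roles of $\alpha$ and $\beta$ swapped yields $F(\beta\alpha) = F(\beta) \cap F(\alpha)$, and commutativity of intersection makes the two right-hand sides coincide. So I would concentrate entirely on the single equality $F(\alpha\beta) = F(\alpha) \cap F(\beta)$ and split it into the two inclusions.

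For the inclusion $F(\alpha)\cap F(\beta) \subseteq F(\alpha\beta)$, I would take $x \in F(\alpha) \cap F(\beta)$, so that $x\alpha = x$ and $x\beta = x$. Then $x \in \dom\alpha$ and $x\alpha = x \in \dom\beta$, whence $x \in \dom(\alpha\beta)$, and using the right-hand composition $x(\alpha\beta) = (x\alpha)\beta = x\beta = x$, giving $x \in F(\alpha\beta)$.

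The reverse inclusion $F(\alpha\beta) \subseteq F(\alpha) \cap F(\beta)$ is where the decreasing hypothesis does the real work. Suppose $x \in F(\alpha\beta)$, i.e. $(x\alpha)\beta = x$. Because $\alpha$ is order-decreasing we have $x\alpha \le x$, and because $\beta$ is order-decreasing we have $(x\alpha)\beta \le x\alpha$. Chaining these gives
\[
x = (x\alpha)\beta \le x\alpha \le x,
\]
so equality must hold at every step. From $x\alpha = x$ I conclude $x \in F(\alpha)$, and then from $(x\alpha)\beta = x\alpha$ combined with $x\alpha = x$ I obtain $x\beta = x$, so $x \in F(\beta)$; thus $x \in F(\alpha)\cap F(\beta)$.

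I do not anticipate any genuine obstacle: the entire content is the observation that a composite of decreasing maps is decreasing, so a fixed point of the composite cannot permit any intermediate value to dip below $x$, which the inequality chain makes precise. The only point deserving mild care is domain bookkeeping, ensuring the relevant compositions are defined; this is automatic once one notes that $x \in \dom(\alpha\beta)$ forces $x \in \dom\alpha$ and $x\alpha \in \dom\beta$.
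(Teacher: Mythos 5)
Your proof is correct. Note, however, that the paper does not actually write out an argument for this lemma: it disposes of the case where $\alpha$ or $\beta$ is the empty map and then defers everything else to Lemma 2.1.3 of Umar's thesis \cite{umar}, which concerns order-decreasing transformations and whose argument is essentially the squeeze you use. So your proposal supplies in full what the paper only cites: the symmetry reduction to the single equality $F(\alpha\beta)=F(\alpha)\cap F(\beta)$, the easy inclusion $F(\alpha)\cap F(\beta)\subseteq F(\alpha\beta)$, and the key chain $x=(x\alpha)\beta\le x\alpha\le x$ forcing equality throughout. Two small points in your favour: your elementwise argument needs no separate treatment of the empty map (both sides of each inclusion are vacuous there, whereas the paper splits this case off explicitly), and your domain bookkeeping — that $x\in\dom(\alpha\beta)$ forces $x\in\dom\alpha$ and $x\alpha\in\dom\beta$, and that $x\alpha=x$ then puts $x$ itself in $\dom\beta$ — is exactly the care needed to adapt the full-transformation argument of the cited thesis to partial maps, which the paper glosses over.
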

\begin{proof} If $\alpha$ or $\beta$ is zero (i.e., the empty map), the result follows. When $\alpha$ and $\beta$ are nonzero  order decreasing  partial maps, the proof is the same as the proof of Lemma 2. 1. 3 in \cite{umar}.
\end{proof}

We initiate our examination of the rank properties of the semigroup $S\in\{K(n,p), \mathcal{SS}^{\prime}_{n}\} $ by first introducing the following definition about injective elements in $J^{*}_{p}$ for any $1\le p\le n-1$.

\begin{definition} An injective map  $\alpha$ in $ J^{*}_{p}$  is said  to be a \emph{requisite element} if it is of the form: \[\alpha_{i}=\begin{pmatrix}2&\cdots&i& a_{i}&\cdots& a_{p}\\1&\cdots&i-1&a_i&\cdots&a_{p}\end{pmatrix},\]
  \noindent where $1<i<a_{i}<a_{i+1}<\cdots< a_{p}\leq n$.
\end{definition}
\begin{remark} \label{requ}If $\alpha_{i}$ is a requisite element, then observe that for each $1\leq i\le p$, \[\dom \, \alpha_{i}=\{2,\ldots,i, a_{i}, \dots, a_{p}\}=\{2, \ldots,i\}\cup F(\alpha_{i}).\]
\noindent Moreover, $\alpha_{i}$ is unique in  $L^{*}_{\alpha_{i}}$, in the sense that no two $\alpha_{i}$'s belong to the same $\mathcal{L}^{*}-$class. However, an $\mathcal{R}^{*}-$class may contain more than one requisite element. In fact, in $J^{*}_{n-1}$ all the $n-1$ requisite elements belong to a single $\mathcal{R}^{*}-$class.
\end{remark}

 We immediately have the following lemma.
\begin{lemma}\label{ms} If $\alpha_{i}$ is the unique requisite element in $L^{*}_{\alpha}$  in $J_{p}^{*}$ \emph{(}$1\le p\le n-1$\emph{)}, then there exists $\beta\in R^{*}_{\alpha}$ such that $\alpha=\beta\alpha_{i}$.
\end{lemma}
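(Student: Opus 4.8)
The plan is to exploit the two starred--Green characterizations from Theorem \ref{a1}: membership in $L^{*}_{\alpha}$ is governed solely by the image, and membership in $R^{*}_{\alpha}$ solely by the kernel. Since $\alpha_{i}\in L^{*}_{\alpha}$, Theorem \ref{a1}(i) gives $\im\,\alpha=\im\,\alpha_{i}=\{1,\dots,i-1,a_{i},\dots,a_{p}\}$. Listing the image of $\alpha$ in increasing order therefore forces
\[
\alpha=\begin{pmatrix} A_{1} & \cdots & A_{i-1} & A_{i} & \cdots & A_{p}\\ 1 & \cdots & i-1 & a_{i} & \cdots & a_{p}\end{pmatrix},
\]
so that $a_{j}=j$ for $1\le j\le i-1$, while the tail $a_{i}<\cdots<a_{p}$ coincides with the fixed points of $\alpha_{i}$.

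Next I would write down the candidate explicitly. Keeping the kernel of $\alpha$ but shifting the first $i-1$ image values up by one, set
\[
\beta=\begin{pmatrix} A_{1} & \cdots & A_{i-1} & A_{i} & \cdots & A_{p}\\ 2 & \cdots & i & a_{i} & \cdots & a_{p}\end{pmatrix}.
\]
By construction $\ker\beta=\ker\alpha$, so Theorem \ref{a1}(ii) yields $\beta\in R^{*}_{\alpha}$; also $\dom\,\beta=\dom\,\alpha$ misses $1$, and the image entries $2<\cdots<i<a_{i}<\cdots<a_{p}$ increase together with the kernel classes, so $\beta$ is isotone. The composition is then a direct check: each element of $A_{j}$ is sent by $\beta$ to $j+1$ for $j\le i-1$, which $\alpha_{i}$ maps back to $j=a_{j}$, and to $a_{j}$ for $j\ge i$, which $\alpha_{i}$ fixes; hence $\beta\alpha_{i}=\alpha$.

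The one genuinely non-routine point, and the step I expect to be the main obstacle, is verifying that $\beta$ is order-decreasing, i.e. that $b_{j}\le\min A_{j}$ for all $j$. For $j\ge i$ this is immediate, since $b_{j}=a_{j}\le\min A_{j}$ by the decreasingness of $\alpha$. For $j\le i-1$ I need $j+1\le\min A_{j}$, and this is exactly where the defining hypothesis $1\notin\dom\,\alpha$ enters. Because $A_{1}<A_{2}<\cdots<A_{j}$ are $j$ nonempty, linearly ordered kernel classes all contained in $\{2,\dots,n\}$, the classes $A_{1}<\cdots<A_{j-1}$ supply at least $j-1$ distinct integers, all lying strictly below $\min A_{j}$ and all at least $2$; this forces $\min A_{j}\ge j+1$ (equivalently, a one-line induction using $\min A_{1}\ge 2$ and $\min A_{k+1}>\max A_{k}\ge\min A_{k}$). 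The inequality gives $b_{j}=j+1\le\min A_{j}$, completing the proof that $\beta\in\mathcal{SS}^{\prime}_{n}$, and hence $\beta\in R^{*}_{\alpha}$ with $\alpha=\beta\alpha_{i}$.
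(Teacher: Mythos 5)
Your proposal is correct and follows essentially the same route as the paper: the identical candidate $\beta$ (same kernel classes as $\alpha$, image values $2,\dots,i,a_i,\dots,a_p$), the same key inequality $j+1\le\min A_j$ derived from $\min A_1\ge 2$ and the strict increase of the $\min A_j$, and the same appeal to the image/kernel characterizations of $\mathcal{L}^*$ and $\mathcal{R}^*$. Your write-up merely spells out the order-decreasing check and the composition $\beta\alpha_i=\alpha$ in more detail than the paper does.
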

 \begin{proof} Let  $\alpha_{i}$ be the unique requisite element in $L^{*}_{\alpha}$ in $ J^{*}_{p}$, where $\alpha$ is  as expressed in \eqref{1}. Now since  $\alpha_{i}\in L^{*}_{\alpha}$ then $\im \, \alpha =\im \, \alpha_{i}$, and so $a_{j}=j$ for all $1\leq j\leq i-1$, so that  $\alpha$ and $\alpha_{i}$ are:
 \[\alpha=\begin{pmatrix}A_{1}&\cdots&A_{i-1}& A_{i}&\cdots& A_{p}\\1&\cdots&i-1&a_i&\cdots&a_{p}\end{pmatrix} \text{ and }\alpha_{i}=\begin{pmatrix}2&\cdots&i& a_{i}&\cdots& a_{p}\\1&\cdots&i-1&a_i&\cdots&a_{p}\end{pmatrix},\]\noindent respectively.
By order preservedness property, it is clear that $\min A_{j}<\min A_{j+1}$ for all $1\le j\le i-1$, and since $2\leq \min A_{1}$, it follows that $j+1\leq \min A_{j}$. Thus,  the map $\beta$ defined as:
\[\beta=\begin{pmatrix}A_{1}&\cdots&A_{i-1}& A_{i}&\cdots& A_{p}\\2&\cdots&i&a_i&\cdots&a_{p}\end{pmatrix}\in \mathcal{SS}^{\prime}_{n}.\]
\noindent  Furthermore, notice that $\ker \, \alpha=\ker \, \beta$, and so $\beta\in R^{*}_{\alpha}$. Clearly, $\beta\alpha_{i}=\alpha$.

The proof is now complete.
\end{proof}

%\begin{remark} Every injective idempotent in $J^{*}_{p}$ is a minimum shift element but the converse is not necessary true.
%\end{remark}

\begin{theorem}\label{hq} Let $\alpha\in \mathcal{SS}^{\prime}_{n}$ be as expressed in \eqref{1}. Then \begin{itemize}
                                                                            \item[(i)] if $a_{1}\neq 1$, then $\alpha$ is idempotent-generated;
                                                                            \item[(ii)] if $a_{1}= 1$, then $\alpha$ is a product of idempotents and the unique requisite element in $L^{*}_{\alpha}$.
                                                                          \end{itemize}
\end{theorem}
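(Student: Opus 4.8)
The plan is to decompose $\alpha$ according to its image set, separating the ``low'' part of the image (where $a_j = j$) from the genuinely decreasing part, and then build each piece from idempotents. First I would note that every idempotent $\varepsilon \in \mathcal{SS}^{\prime}_{n}$ fixes every point of its image, so idempotents are exactly those maps $\varepsilon = \bigl(\begin{smallmatrix} E_1 & \cdots & E_p \\ e_1 & \cdots & e_p \end{smallmatrix}\bigr)$ with $e_j = \min E_j$ for all $j$; in particular the map $\alpha\alpha'$ constructed in Theorem~\ref{inv} is an idempotent with $\ker(\alpha\alpha') = \ker\alpha$. This already gives one canonical idempotent in $R^{*}_{\alpha}$, namely $\varepsilon = \alpha\alpha' = \bigl(\begin{smallmatrix} A_1 & \cdots & A_p \\ t_1 & \cdots & t_p \end{smallmatrix}\bigr)$ where $t_j = \min A_j$.

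For part (i), assume $a_1 \neq 1$, i.e. $a_1 \geq 2$. Here $\alpha$ itself is injective on kernel classes but with each block collapsed; the idea is to write $\alpha = \varepsilon \cdot \eta$, where $\varepsilon = \alpha\alpha'$ is the idempotent above and $\eta$ is an injective order-decreasing map on $\im\varepsilon = \{t_1,\dots,t_p\}$ sending $t_j \mapsto a_j$. Since $t_j = \min A_j \geq a_j$ with $a_1 \geq 2$ and the $a_j$ strictly increasing, $\eta$ lies in $\mathcal{SS}^{\prime}_{n}$. It then remains to show that such an injective order-decreasing map whose image avoids $1$ is itself idempotent-generated; I would do this by induction on the total ``displacement'' $\sum (t_j - a_j)$, factoring off one elementary idempotent of the form $\bigl(\begin{smallmatrix} \cdots & \{k,k+1\} & \cdots \\ \cdots & k & \cdots \end{smallmatrix}\bigr)$ at a time to lower each point by one step, exactly as in the standard semiband arguments for order-decreasing maps. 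The base case is when $\eta$ is the identity on its domain (an idempotent), which is legitimate precisely because $a_1 \geq 2$ keeps every image point away from $1$.

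For part (ii), assume $a_1 = 1$. Now $\alpha$ cannot be a product of idempotents of $\mathcal{SS}^{\prime}_{n}$, since any idempotent fixes its image and hence cannot have $1$ in its image while $1 \notin \dom$; this is the structural role of the requisite element. The strategy is to invoke Lemma~\ref{ms}: let $\alpha_i$ be the unique requisite element in $L^{*}_{\alpha}$, so there is $\beta \in R^{*}_{\alpha}$ with $\alpha = \beta\alpha_i$. Because $\beta$ has the same kernel as $\alpha$ but its image $\{2,\dots,i,a_i,\dots,a_p\}$ now avoids $1$, part (i) applies to $\beta$, expressing it as a product of idempotents; composing with $\alpha_i$ then gives the claimed factorisation of $\alpha$ as idempotents times the requisite element.

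The main obstacle I anticipate is the inductive step in part (i): I must verify that each elementary idempotent factor genuinely lies in $\mathcal{SS}^{\prime}_{n}$ (so that $1$ never enters a domain and every map stays order-decreasing and isotone), and that peeling off one step of displacement preserves injectivity and the image-avoids-$1$ condition at every stage. Keeping the domains disjoint from $1$ throughout the induction, together with checking that the residual map after each factorisation is still of the same form, is the delicate bookkeeping; the idempotency of $\alpha\alpha'$ and Lemma~\ref{ms} handle the rest cleanly.
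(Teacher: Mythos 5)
Your proposal is correct, and its overall skeleton matches the paper's: part (ii) is exactly the paper's argument (Lemma \ref{ms} gives $\alpha=\beta\alpha_{i}$ with $\beta\in R^{*}_{\alpha}$, $\im \, \beta=\{2,\dots,i,a_{i},\dots,a_{p}\}\not\ni 1$, so part (i) applies to $\beta$). Where you genuinely diverge is part (i). The paper handles it by citation: $U=\{\alpha\in \mathcal{SS}^{\prime}_{n}: 1\not\in \im \, \alpha\}$ is a subsemigroup isomorphic to $\mathcal{LS}_{n-1}$, which is idempotent-generated by [\cite{zuf}, Lemma 3.2]. You instead reprove this from scratch: factor $\alpha=(\alpha\alpha^{\prime})\eta$ with $\alpha\alpha^{\prime}$ the idempotent from Theorem \ref{inv} and $\eta$ injective, then induct on the displacement $\sum_{j}(t_{j}-a_{j})$, peeling off elementary idempotents of the form $\{k,k+1\}\mapsto k$. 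Your route buys self-containedness (no appeal to the submitted companion paper) and exhibits the generating idempotents explicitly; the paper's buys brevity. Your flagged bookkeeping does go through, but with one pinch point worth recording: if you shift the image point $a_{j}$ up to $a_{j}+1$ (equivalently, lower $t_{j}$ in unit steps) at the \emph{smallest} displaced index $j$, the intermediate map can fail to be injective when $a_{j+1}=a_{j}+1$. Working always at the \emph{largest} displaced index $j$ avoids this, since then $t_{j+1}=a_{j+1}$ forces $a_{j}+1\leq t_{j}<t_{j+1}=a_{j+1}$, and every intermediate map stays injective, isotone, decreasing, with domain and image avoiding $1$. With that ordering fixed your induction is complete; your side observations (idempotents of $\mathcal{SS}^{\prime}_{n}$ are precisely the maps sending each kernel class to its minimum, and no product of such idempotents can have $1$ in its image) are also correct, the latter neatly explaining why requisite elements are unavoidable in (ii).
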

\begin{proof} Let $\alpha\in \mathcal{SS}^{\prime}_{n}$ be as expressed in  \eqref{1}.

\noindent\textbf{(i.)} Suppose $a_{1}\neq 1$ and let $U=\{\alpha\in\mathcal{SS}^{\prime}_{n}: \, 1\not\in \, \im \, \alpha \}$. Then it is not difficult to see that $U$ is a subsemigroup of $\mathcal{SS}^{\prime}_{n}$, which is isomorphic to $\mathcal{LS}_{n-1}$. Hence by [\cite{zuf}, Lemma 3.2], $U$ is idempotent generated.

 \noindent\textbf{ (ii.)} Now suppose $a_{1}=1$. Thus, by Lemma \ref{ms} $\alpha$ can be expressed as  \[\alpha=\beta\alpha_{i},\] \noindent for some  $\beta\in R^{*}_{\alpha}$, where $\alpha_{i}$ is the unique requisite  element in $L^{*}_{\alpha}$. To be precise,
 \[\beta=\begin{pmatrix}A_{1}&\cdots&A_{i-1}& A_{i}&\cdots& A_{p}\\2&\cdots&i&a_i&\cdots&a_{p}\end{pmatrix} \text{ and }\alpha_{i}=\begin{pmatrix}2&\cdots&i& a_{i}&\cdots& a_{p}\\1&\cdots&i-1&a_i&\cdots&a_{p}\end{pmatrix}.\]
\noindent Notice that in the $\im \, \beta$,  $a_{1}=2\neq 1$, and so $\beta\in U$. Thus, by (i) $\beta$ is idempotent-generated, as stipulated.
\end{proof}

 Consequently, we have the following corollary.
\begin{corollary} The semigroup $\mathcal{SS}^{\prime}_{n}$ is generated by its idempotents and  requisite elements.
\end{corollary}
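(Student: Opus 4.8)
The plan is to derive the statement directly from Theorem \ref{hq}, which already carries all the structural weight; the corollary is then essentially a bookkeeping argument over the two cases of that theorem. First I would fix an arbitrary $\alpha \in \mathcal{SS}^{\prime}_{n}$ and write it in the canonical form \eqref{1}, so that $\alpha = \begin{pmatrix} A_1 & \cdots & A_p \\ a_1 & \cdots & a_p \end{pmatrix}$ with $a_1 < a_2 < \cdots < a_p$. Since $\alpha$ is decreasing and $1 \notin \dom \alpha$, every element of $A_1$ is at least $2$, whence $a_1 \le \min A_1$ forces $1 \le a_1 \le \min A_1$. Thus exactly one of the two mutually exclusive cases $a_1 \ne 1$ and $a_1 = 1$ occurs, and these are exhaustive.

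Next I would split on these two cases. If $a_1 \ne 1$, then Theorem \ref{hq}(i) shows that $\alpha$ is a product of idempotents of $\mathcal{SS}^{\prime}_{n}$. If $a_1 = 1$, then Theorem \ref{hq}(ii) shows that $\alpha$ is a product of idempotents together with the unique requisite element in $L^{*}_{\alpha}$. In either case, $\alpha$ lies in the subsemigroup generated by $E(\mathcal{SS}^{\prime}_{n})$ together with the set of all requisite elements.

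Since $\alpha$ was arbitrary, every element of $\mathcal{SS}^{\prime}_{n}$ is a finite product of idempotents and requisite elements. I would then observe that each requisite element belongs to some $J^{*}_{p} \subseteq \mathcal{SS}^{\prime}_{n}$ and each idempotent lies in $E(\mathcal{SS}^{\prime}_{n}) \subseteq \mathcal{SS}^{\prime}_{n}$, so the proposed generating set is itself contained in $\mathcal{SS}^{\prime}_{n}$. Combining these two observations yields $\langle E(\mathcal{SS}^{\prime}_{n}) \cup \{\text{requisite elements}\} \rangle = \mathcal{SS}^{\prime}_{n}$, which is exactly the claim.

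I would not expect any genuine obstacle here: the whole difficulty has been absorbed into Theorem \ref{hq}, and behind it into Lemma \ref{ms}, the isomorphism $U \cong \mathcal{LS}_{n-1}$, and the idempotent-generation of $\mathcal{LS}_{n-1}$ borrowed from \cite{zuf}. The only points deserving a moment's care are confirming that the dichotomy $a_1 = 1$ versus $a_1 \ne 1$ is genuinely exhaustive (handled by the inequality $1 \le a_1 \le \min A_1$ above) and checking that the proposed generators actually sit inside $\mathcal{SS}^{\prime}_{n}$, so that the assertion concerns generation within the semigroup itself rather than within the larger monoid $\mathcal{P}_{n}$.
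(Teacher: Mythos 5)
Your proposal is correct and takes essentially the same route as the paper, which states this corollary as an immediate consequence of Theorem \ref{hq}: the dichotomy $a_1 = 1$ versus $a_1 \neq 1$ covers every $\alpha$ written in the form \eqref{1}, and the two cases of that theorem supply the required factorizations. Your additional checks (exhaustiveness of the case split, generators lying inside $\mathcal{SS}^{\prime}_{n}$) are harmless elaborations of what the paper leaves implicit.
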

Notice that in the proof of Theorem \ref{hq}, $|\im \, \alpha|=h(\alpha)=h(\beta)=h(\alpha_{i})=p$ for all $1\le i\le p$. Based on this, we have the following result.

\begin{lemma}\label{hh}
Every element  in $\mathcal{SS}^{\prime}_{n}$  of height $p$ can be expressed as a product of idempotents and  requisite elements in $\mathcal{SS}^{\prime}_{n}$, each of height $p$.
\end{lemma}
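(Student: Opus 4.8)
The plan is to bootstrap Lemma~\ref{hh} from Theorem~\ref{hq} and its corollary by tracking the height through every factorization that appears there. The essential observation, already flagged in the remark preceding the statement, is that in the decomposition $\alpha=\beta\alpha_i$ of Theorem~\ref{hq}(ii) one has $h(\alpha)=h(\beta)=h(\alpha_i)=p$, and that the requisite element $\alpha_i$ is injective of image size exactly $p$, hence lives in $J^*_p$. So the only thing that needs genuine verification is that the idempotent factorization of $\beta$ supplied by [\cite{zuf}, Lemma 3.2] (via the isomorphism $U\cong\mathcal{LS}_{n-1}$) can be taken to consist of idempotents each of height $p$, rather than idempotents of smaller height.

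First I would fix $\alpha\in\mathcal{SS}^{\prime}_{n}$ with $h(\alpha)=p$ and split into the two cases of Theorem~\ref{hq}. In case (i), $a_1\neq 1$ so $\alpha\in U$; here I would invoke the stronger statement that $\mathcal{LS}_{n-1}$ (equivalently, each ideal layer of it) is generated by idempotents of the \emph{appropriate height}, which is exactly the content of the height-graded idempotent generation results for the large Schr\"{o}der monoid in \cite{zuf}. Concretely, since $\mathcal{SS}^{\prime}_n$ is $\mathcal{R}$-trivial with every $\mathcal{R}^*$-class containing a unique idempotent (Lemma~\ref{un}), and since composing order-decreasing maps cannot increase height while $h(\alpha)=p$ forces each factor to have height at least $p$, any factorization of $\alpha$ into idempotents must in fact use only idempotents of height exactly $p$: by Lemma~\ref{hqd}, $F(e_1\cdots e_k)=\bigcap F(e_j)$, and for idempotents $e$ in this semigroup $h(e)=f(e)=|F(e)|$, so $p=h(\alpha)\le h(e_j)$ for every factor forces $h(e_j)=p$. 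That monotonicity argument is the crux and I would state it as the key step.

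In case (ii), $a_1=1$, and I would use $\alpha=\beta\alpha_i$ with $\beta\in U$, $h(\beta)=p$, and $\alpha_i$ the requisite element of height $p$. Applying case (i) to $\beta$ writes $\beta=e_1\cdots e_k$ with each $e_j$ idempotent of height $p$, whence $\alpha=e_1\cdots e_k\,\alpha_i$ is a product of height-$p$ idempotents and a single height-$p$ requisite element, exactly as claimed. I would close by remarking that the two cases together cover all $\alpha$ of height $p$, completing the proof.

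The main obstacle I anticipate is the height-preservation bookkeeping: the bare citation [\cite{zuf}, Lemma 3.2] gives idempotent generation of $U\cong\mathcal{LS}_{n-1}$ but not, on its face, a \emph{uniform height} guarantee for the factors. The work is therefore to argue — as sketched above via Lemma~\ref{hqd} and the identity $h(e)=|F(e)|$ for idempotents in an $\mathcal{R}$-trivial order-decreasing semigroup — that any idempotent factorization of an element of height $p$ automatically consists of idempotents of height exactly $p$. Once that monotonicity lemma is in hand the statement is immediate; everything else is the routine transcription of Theorem~\ref{hq}.
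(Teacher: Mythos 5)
Your overall skeleton (split into the two cases of Theorem \ref{hq}, write $\alpha=\beta\alpha_i$ in case (ii), reduce to idempotent factorizations inside $U\cong\mathcal{LS}_{n-1}$ in case (i)) is the same as the paper's; the paper disposes of the lemma simply by remarking that in the proof of Theorem \ref{hq} one has $h(\alpha)=h(\beta)=h(\alpha_i)=p$, tacitly using the height-graded form of idempotent generation of $\mathcal{LS}_{n-1}$ from \cite{zuf}. The problem is the step you yourself single out as the crux. The claim that ``any idempotent factorization of an element of height $p$ automatically consists of idempotents of height exactly $p$'' is false, and your derivation of it is a non-sequitur: from the fact that composition cannot increase height you correctly get $p=h(\alpha)\le h(e_j)$ for every factor $e_j$, but that is a \emph{lower} bound, and nothing you write produces the needed upper bound $h(e_j)\le p$. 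Lemma \ref{hqd} together with $h(e)=|F(e)|$ for idempotents only gives $h(e_j)=|F(e_j)|\ge \left|\bigcap_k F(e_k)\right|=f(\alpha)$, which again points the wrong way (and $f(\alpha)$ can be strictly smaller than $p$ when $\alpha$ is not idempotent).

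Concretely, in $\mathcal{SS}^{\prime}_{4}$ take the partial identities $\epsilon=\begin{pmatrix}2&3\\2&3\end{pmatrix}$ and $\eta=\begin{pmatrix}2&4\\2&4\end{pmatrix}$, both idempotents of height $2$; then $\epsilon\eta=\begin{pmatrix}2\\2\end{pmatrix}$ has height $1$. So an element of height $1$ admits an idempotent factorization all of whose factors have height $2$: the automatic height rigidity you rely on simply does not hold, and with it case (i) of your argument collapses (case (ii) inherits the same gap, since it feeds $\beta$ back into case (i)). What is actually needed is the existence of \emph{some} factorization whose factors all have height $p$, i.e.\ the height-preserving version of idempotent generation of $\mathcal{LS}_{n-1}$ established in \cite{zuf} --- the same graded statement the paper later invokes as [\cite{zuf}, Lemma 3.7] in the proof of Lemma \ref{lm1}. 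Your first suggestion, invoking that graded result directly, is the correct route and is in substance what the paper does; once you take it, the monotonicity detour should be deleted rather than repaired, because it cannot be repaired.
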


 Now let $M(p)=\{\alpha\in {RSS}_{n}^{\prime}(p): \alpha \text{ is a requisite element}\}$ and  $E({RSS}_{n}^{\prime}(p)\setminus\{0\})$ be  the collection of all nonzero idempotents in $ {RSS}_{n}^{\prime}(p)$. Then we have the following lemma.

  \begin{lemma}\label{nid}  For $1\le p\leq n-1$, we have \begin{itemize} \item[(i)] $|M(p)|=\binom{n-1}{p-1}$; \item[(ii)] $|E({RSS}_{n}^{\prime}(p))\setminus\{0\}|=\sum\limits_{r=p}^{n-1}{\binom{n-1}{r}}{\binom{r-1}{p-1}}$.\end{itemize}
\end{lemma}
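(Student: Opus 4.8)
The plan is to handle the two counts separately, reducing each to a standard binomial sum. For (i), I would set up the bijection $\alpha_{i}\mapsto \im\alpha_{i}$ between the requisite elements of height $p$ and the $p$-element subsets of $[n]$ containing $1$. A requisite element has image $\{1,\dots,i-1\}\cup\{a_{i},\dots,a_{p}\}$, and since $1<i$ we have $i-1\ge 1$, so this image is a $p$-subset containing $1$. By Remark \ref{requ} each requisite element is the unique requisite element of its $\mathcal{L}^{*}$-class, and as $\mathcal{L}^{*}$ is governed by the image (Theorem \ref{a1}(i)) the assignment is injective. For surjectivity, given a $p$-subset $S\ni 1$, I would let $i-1$ be the length of the maximal initial run $\{1,2,\dots,i-1\}\subseteq S$; then $i\notin S$, the remaining points $F=S\setminus\{1,\dots,i-1\}$ all exceed $i$, and reading off the map $\{2,\dots,i\}\to\{1,\dots,i-1\}$ together with the fixed set $F$ reconstructs a requisite element with image $S$. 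Counting the $p$-subsets of $[n]$ containing $1$ (equivalently, choosing the other $p-1$ elements from $\{2,\dots,n\}$) then gives $\binom{n-1}{p-1}$.

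For (ii), I would first identify the nonzero idempotents of ${RSS}_{n}^{\prime}(p)$ with the idempotents of $\mathcal{SS}_{n}^{\prime}$ of height exactly $p$. Writing such an $\varepsilon$ as in \eqref{eqq3}, idempotency forces every image point to be fixed, i.e.\ $a_{i}\in A_{i}$; together with the decreasing constraint $a_{i}\le\min A_{i}$ this yields $a_{i}=\min A_{i}$ for all $i$. Hence an idempotent of height $p$ is completely determined by its kernel, which is precisely the content of Lemma \ref{un}: each $\mathcal{R}^{*}$-class of $J_{p}^{*}$ contains exactly one idempotent. Since $\mathcal{R}^{*}$ is determined by the kernel (Theorem \ref{a1}(ii)), the number of height-$p$ idempotents equals the number of $\mathcal{R}^{*}$-classes in $J_{p}^{*}$, which Remark \ref{remm}(i) records as $\sum_{r=p}^{n-1}\binom{n-1}{r}\binom{r-1}{p-1}$. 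As a self-contained alternative one can count directly: choose a domain $D\subseteq\{2,\dots,n\}$ with $|D|=r$ in $\binom{n-1}{r}$ ways, then split $D$ into $p$ consecutive nonempty blocks in $\binom{r-1}{p-1}$ ways, each block fixing its minimum, and sum over $p\le r\le n-1$.

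The binomial evaluations are routine; the only delicate point is in part (i). The main obstacle I anticipate is the boundary requisite element with no fixed points, namely the one of image $\{1,2,\dots,p\}$, corresponding to the index value $i=p+1$ at which the block $\{a_{i},\dots,a_{p}\}$ is empty. One must confirm that the degenerate map $\begin{pmatrix}2&\cdots&p+1\\1&\cdots&p\end{pmatrix}$ is a genuine requisite element and is counted exactly once, so that the bijection lands on all $\binom{n-1}{p-1}$ subsets containing $1$ and not one fewer. Equivalently, if one prefers the direct parametrisation by $(i,F)$, this edge case is exactly the final term of the hockey-stick sum $\sum_{i}\binom{n-i}{p-i+1}=\binom{n-1}{p-1}$, whose omission would break the identity.
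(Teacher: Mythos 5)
Your proof is correct and takes essentially the same approach as the paper: part (i) is the paper's count of $p$-element image sets containing $1$ (which you usefully make rigorous as an explicit bijection, including the degenerate requisite element with empty fixed-point set), and part (ii) is exactly the paper's appeal to Remark \ref{remm} together with the unique-idempotent-per-$\mathcal{R}^{*}$-class fact of Lemma \ref{un}. Your self-contained direct count for (ii) (choose the domain, then split it into $p$ consecutive blocks each fixing its minimum) is a nice verification, but it is the same enumeration that underlies the remark rather than a different method.
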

\begin{proof}
\noindent \textbf{(i.)} Given that $1$ must be included in every image set that contains $p$ images, we can then choose the remaining $p-1$ images from the available $n-1$ elements in $\binom{n-1}{p-1}$ distinct ways, as stipulated.

\noindent\textbf{(ii.)} The result follows from Remark \ref{remm}.
\end{proof}

Now, let $G(p)=M(p)\cup (E({RSS}_{n}^{\prime}(p))\setminus\{0\})$. The next result shows that the subset $G(p)$ of ${RSS}_{n}^{\prime}(p)$  is the minimum generating set of ${RSS}_{n}^{\prime}(p)\setminus \{0\}$.
 \begin{lemma}\label{minnn} Let $\alpha$, $\beta$ be elements in ${RSS}_{n}^{\prime}(p)\setminus \{0\}$. Then $\alpha\beta\in G(p)$  if and only if $\alpha, \, \beta\in G(p)$  and $\alpha\beta=\alpha$ or $\alpha\beta=\beta$.
\end{lemma}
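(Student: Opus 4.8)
The plan is to prove the two implications separately; the reverse implication is immediate and the forward implication carries all of the content. For the reverse implication, if $\alpha,\beta\in G(p)$ and $\alpha\beta=\alpha$ or $\alpha\beta=\beta$, then $\alpha\beta$ coincides with one of $\alpha,\beta$ and hence lies in $G(p)$, so nothing more is required.

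For the forward implication, suppose $\alpha\beta\in G(p)$. Since $\alpha\beta\ne 0$ in ${RSS}_{n}^{\prime}(p)$ its height is exactly $p$, and because $|\im\,\alpha\beta|\le\min\{|\im\,\alpha|,|\im\,\beta|\}=p$ we obtain $h(\alpha)=h(\beta)=h(\alpha\beta)=p$. From $h(\alpha\beta)=h(\alpha)$ I would first record that $\im\,\alpha\subseteq\dom\,\beta$ and that $\beta$ is injective on $\im\,\alpha$; consequently $\dom(\alpha\beta)=\dom\,\alpha$ and $\ker\,\alpha=\ker(\alpha\beta)$, so $\alpha\,\mathcal{R}^{*}\,\alpha\beta$ by Theorem \ref{a1}(ii). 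Dually, $h(\alpha\beta)=h(\beta)$ forces $\im(\alpha\beta)=\im\,\beta$, whence $\beta\,\mathcal{L}^{*}\,\alpha\beta$ by Theorem \ref{a1}(i). These two relations will be the backbone of the argument.

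Next I would split into the two (disjoint) types of generator, noting that an idempotent of $G(p)$ has $1\notin\im$ and satisfies $F(\varepsilon)=\im\,\varepsilon$, whereas a requisite element is injective with $1\in\im$; so the cases are separated simply by testing whether $1\in\im(\alpha\beta)$. If $\alpha\beta$ is a nonzero idempotent, then by Lemma \ref{hqd} we have $F(\alpha)\cap F(\beta)=F(\alpha\beta)$, a set of size $p$; since fixed points lie in the image and $f(\alpha),f(\beta)\le p=h(\alpha)=h(\beta)$, this forces $F(\alpha)=\im\,\alpha$ and $F(\beta)=\im\,\beta$. Hence $\alpha$ and $\beta$ are themselves idempotents with $\im\,\alpha=F(\alpha)=F(\beta)=\im\,\beta$, and since every point of $\im\,\alpha=\im\,\beta$ is fixed by $\beta$ one reads off $\alpha\beta=\alpha$; thus $\alpha,\beta\in G(p)$ and $\alpha\beta=\alpha$, as required.

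If instead $\alpha\beta$ is a requisite element $\alpha_{i}$, then it is injective, so $\ker\,\alpha=\ker(\alpha\beta)$ shows that $\alpha$ is injective; combining injectivity with the inequalities $x(\alpha\beta)\le x\alpha\le x$, the isotone property of $\alpha$, and the explicit form of $\alpha_{i}$, a short analysis forces $x\alpha\in\{x-1,x\}$ for $x$ in the initial segment $\{2,\dots,i\}$ and $x\alpha=x$ on the tail $\{a_{i},\dots,a_{p}\}$, and shows that the positions with $x\alpha=x-1$ form an initial run; this pins $\alpha$ down as either a requisite element or an idempotent, so $\alpha\in G(p)$. The main obstacle is to carry out the corresponding control of the \emph{second} factor $\beta$: the height hypothesis constrains $\beta$ only on $\im\,\alpha$ (where it must be injective) and leaves its kernel elsewhere free, so the delicate point is to use $\beta\,\mathcal{L}^{*}\,\alpha\beta$ (forcing $\im\,\beta=\im\,\alpha_{i}\ni 1$) together with the order-decreasing and isotone constraints to rule out any additional kernel classes of $\beta$ and thereby identify $\beta$ with $\alpha\beta$. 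I expect this requisite case, and specifically the claim that the decreasing constraint leaves $\beta$ no room for extra kernel classes, to be the step requiring by far the most care.
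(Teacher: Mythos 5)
Your converse, your height bookkeeping (from $h(\alpha\beta)=p$ you correctly extract $\im \, \alpha\subseteq\dom \, \beta$, injectivity of $\beta$ on $\im \, \alpha$, $\dom(\alpha\beta)=\dom \, \alpha$, $\ker(\alpha\beta)=\ker \, \alpha$, $\im(\alpha\beta)=\im \, \beta$), and your idempotent case are all correct, and they coincide with the paper's Case i, which runs the same fixed-point count $p=f(\alpha\beta)\le f(\alpha)\le|\im \, \alpha|=p$ through Lemma \ref{hqd}. Your control of the first factor $\alpha$ in the requisite case is also sound; in fact one can sharpen it exactly as the paper does: since $2\in\dom(\alpha\beta)$, we have $2\alpha\in\dom \, \beta$, so $2\alpha\neq 1$, and injectivity plus isotonicity plus the decreasing property force $\alpha$ to be the partial identity on $\dom(\alpha\beta)$, hence an idempotent of $G(p)$.

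The gap is exactly where you predicted, and it is fatal: the claim that the isotone and decreasing conditions leave $\beta$ ``no room for extra kernel classes'' is never proved in your proposal, and it cannot be proved, because it is false --- and with it the forward implication of the lemma. Take $n=4$, $p=2$, and
\[
\alpha=\begin{pmatrix}2&3\\2&3\end{pmatrix},\qquad
\beta=\begin{pmatrix}2&\{3,4\}\\1&3\end{pmatrix},\qquad\text{so that}\qquad
\alpha\beta=\begin{pmatrix}2&3\\1&3\end{pmatrix}.
\]
Here $\alpha,\beta\in {RSS}_{4}^{\prime}(2)\setminus\{0\}$ (indeed $\beta$ is isotone, order-decreasing, of height $2$, with $1\notin\dom \, \beta$), and $\alpha\beta$ is the requisite element of $M(2)$ with image $\{1,3\}$; yet $\beta$ is neither injective (so not requisite) nor idempotent ($\beta^{2}$ has height $1$), so $\beta\notin G(2)$, and $\alpha\beta$ equals neither factor. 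The extra domain point $4$ of $\beta$ is invisible to the product because it lies outside $\im \, \alpha$, and nothing in the hypotheses forbids it. Note that the paper's own Case ii commits precisely this error: it asserts ``since $\im \, \alpha=\dom \, \beta$'' when the height hypothesis only yields $\im \, \alpha\subseteq\dom \, \beta$, i.e., it silently assumes away the extra kernel classes you flagged; so the published proof and your proposal break at the same point, the difference being that you identified the point and left it open while the paper asserted it. The correct conclusion in the requisite case is only that $\alpha$ is an idempotent of $G(p)$ and that $\beta$ \emph{restricted to} $\im \, \alpha$ equals $\alpha\beta$. The minimality claim behind Theorem \ref{pb} can still be rescued, but by a different argument: every nonzero product in ${RSS}_{n}^{\prime}(p)$ has the same image as its last factor and fixed-point set $F(\gamma\delta)=F(\gamma)\cap F(\delta)$, which forces any generating set to contain all idempotents of height $p$ and, for each of the $\binom{n-1}{p-1}$ admissible images containing $1$, at least one element with that image.
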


\begin{proof} Suppose $\alpha\beta\in G(p)$. Thus, either $\alpha\beta\in E({RSS}_{n}^{\prime}(p)\setminus\{0\})$ or $\alpha\beta\in M(p).$ We consider the two cases separately.

\noindent \textbf{Case i.} Suppose $\alpha\beta\in E({RSS}_{n}^{\prime}(p)\setminus\{0\})$.
 Then \[
p= f(\alpha\beta)\leq f(\alpha)\leq |\im \, \alpha|=p,\]
\[
p= f(\alpha\beta)\leq f(\beta)\leq |\im \, \beta|=p.\]

This ensures that \[F(\alpha)=F(\alpha\beta)=F(\beta), \] \noindent and so $\alpha, \, \beta\in E({RSS}_{n}^{\prime}(p)\setminus\{0\})\subset G(p)$, which implies that $\alpha, \, \beta\in G(p)$  and $\alpha\beta=\alpha$.

\noindent \textbf{Case ii.} Now suppose $\alpha\beta\in M(p)$. Thus $\alpha\beta$ is a requisite element which has the form \[\alpha\beta=\begin{pmatrix}2& \cdots &i&a_{i}&\cdots&a_p\\1&\cdots&i-1&a_{i}&\cdots&a_p\end{pmatrix},\] \noindent where $1<i<a_{i}<a_{i+1}<\cdots< a_{p}\leq n$. This means that $\dom \, \alpha=\dom \, \alpha\beta$ and $\im \, \beta =\im \, \alpha\beta$. Thus, \[\alpha=\begin{pmatrix}2& \cdots &i&a_{i}&\cdots&a_p\\2\alpha&\cdots&i\alpha&a_{i}&\cdots&a_p\end{pmatrix} \text{ and } \beta=\begin{pmatrix}2\alpha& \cdots &i\alpha&a_{i}&\cdots&a_p\\1&\cdots&i-1&a_{i}&\cdots&a_p\end{pmatrix},\] since $\im \, \alpha=\dom \, \beta$.
\noindent The claim here is that $\alpha$ must be an idempotent. Notice that either $2\alpha=1$ or $2\alpha=2$. In the former, we see that $1=2\alpha\in \dom \, \beta$ which is a contradiction. Therefore $2\alpha=2$ which implies $j\alpha=j$ for all $2\leq j\leq i$. Thus $\alpha$ is an idempotent, that is, \[\alpha=\begin{pmatrix}2& \cdots &i&a_{i}&\cdots&a_p\\2&\cdots&i&a_{i}&\cdots&a_p\end{pmatrix} \text{ and }\beta=\begin{pmatrix}2& \cdots &i&a_{i}&\cdots&a_p\\1&\cdots&i-1&a_{i}&\cdots&a_p\end{pmatrix}.\] \noindent  Therefore, $\beta\in M(p)\subset G(p)$ and $\alpha\in E({RSS}_{n}^{\prime}(p)\setminus\{0\})\subset G(p)$ and also $\alpha\beta=\beta.$
The converse is obvious.
\end{proof}

We have now established a key result of this section.

\begin{theorem}\label{pb} Let ${RSS}_{n}^{\prime}(p)$ be as defined in \eqref{knn}. Then  \[\text{rank } {RSS}_{n}^{\prime}(p) =\binom{n-1}{p-1}+\sum\limits_{r=p}^{n-1}{\binom{n-1}{r}}{\binom{r-1}{p-1}}.\]
\end{theorem}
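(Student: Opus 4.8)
The plan is to show that
\[
G(p)=M(p)\cup\bigl(E({RSS}_{n}^{\prime}(p))\setminus\{0\}\bigr)
\]
is a minimum generating set of ${RSS}_{n}^{\prime}(p)$. Since a requisite element sends $2\mapsto 1$ and hence is never idempotent, the two sets $M(p)$ and $E({RSS}_{n}^{\prime}(p))\setminus\{0\}$ are disjoint, so once minimality is established we immediately get, via Lemma \ref{nid},
\[
\text{rank}\,{RSS}_{n}^{\prime}(p)=|G(p)|=|M(p)|+|E({RSS}_{n}^{\prime}(p))\setminus\{0\}|=\binom{n-1}{p-1}+\sum_{r=p}^{n-1}\binom{n-1}{r}\binom{r-1}{p-1}.
\]
To prove minimality I would establish the two inequalities separately: that $G(p)$ generates ${RSS}_{n}^{\prime}(p)$, and that $G(p)$ is contained in every generating set.

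For the upper bound I would invoke Lemma \ref{hh}: every height-$p$ element $\alpha$ of $\mathcal{SS}^{\prime}_{n}$ is a product $g_1g_2\cdots g_m$ of idempotents and requisite elements each of height $p$, i.e.\ of elements of $G(p)$. The only delicate point is that this factorisation must survive passage to the Rees quotient. Because height is non-increasing along a product and $h(g_1)=h(\alpha)=p$, every partial product $g_1\cdots g_j$ has height exactly $p$; hence no partial product lies in $K(n,p-1)$, so none collapses to $0$ in ${RSS}_{n}^{\prime}(p)$ and the identity $\alpha=g_1\cdots g_m$ holds verbatim in the quotient. This gives $\langle G(p)\rangle\supseteq {RSS}_{n}^{\prime}(p)\setminus\{0\}$. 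Finally $0\in\langle G(p)\rangle$: choosing any requisite $\alpha_i\in M(p)$ and any $\beta\in G(p)$, the injectivity of $\alpha_i$ together with $2\alpha_i=1\notin\dom\,\beta$ forces $\dom(\alpha_i\beta)\subseteq\dom\,\alpha_i\setminus\{2\}$, so $h(\alpha_i\beta)\le p-1$ and $\alpha_i\beta=0$. Thus $\langle G(p)\rangle={RSS}_{n}^{\prime}(p)$ and $\text{rank}\,{RSS}_{n}^{\prime}(p)\le|G(p)|$.

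For the lower bound I would show $G(p)\subseteq A$ for an arbitrary generating set $A$. Fix $\gamma\in G(p)$ and write $\gamma=a_1a_2\cdots a_k$ with each $a_i\in A$; since $\gamma\neq 0$ and $0$ is absorbing, every partial product is nonzero. I argue by induction on $k$. If $k=1$ then $\gamma=a_1\in A$. If $k\ge 2$, apply Lemma \ref{minnn} to the product $\gamma=a_1\,(a_2\cdots a_k)$ of two nonzero elements of ${RSS}_{n}^{\prime}(p)\setminus\{0\}$: it yields $a_1,\,a_2\cdots a_k\in G(p)$ together with $\gamma=a_1$ or $\gamma=a_2\cdots a_k$. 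In the first case $\gamma=a_1\in A$; in the second the shorter factorisation $\gamma=a_2\cdots a_k$ lets the induction proceed. Either way $\gamma$ coincides with some $a_i\in A$, so $G(p)\subseteq A$ and $\text{rank}\,{RSS}_{n}^{\prime}(p)\ge|G(p)|$.

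Combining the two bounds gives the claimed value. The substantive input is entirely carried by Lemma \ref{minnn}, which encodes the indecomposability of the generators and drives the lower bound; the main obstacle in the argument as laid out is purely the bookkeeping around the Rees quotient, namely ensuring that the factorisations of Lemma \ref{hh} never degenerate to $0$ and that the inductive peeling in the lower bound never involves the absorbing zero. Both are controlled by the non-increasing behaviour of the height $h$ along products.
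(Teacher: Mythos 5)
Your proof is correct and takes essentially the same route as the paper: the paper's own proof simply cites Lemmas \ref{minnn} and \ref{nid}, with generation of ${RSS}_{n}^{\prime}(p)$ by $G(p)$ (via Lemma \ref{hh}) left implicit. Your write-up supplies exactly the details the paper suppresses, namely the Rees-quotient bookkeeping ensuring no partial product collapses to $0$, and the inductive peeling argument by which Lemma \ref{minnn} forces $G(p)$ into every generating set.
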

\begin{proof} The proof follows from Lemmas \ref{minnn} \& \ref{nid}.
\end{proof}

The next lemma is crucial in determining  the ranks of the Schr\"{o}der semigroup  $\mathcal{SS}^{\prime}_{n}$ and its two sided ideal $K(n,p)$.  Now  for $1\leq p\leq n-1$ let \[J^{*}_{p}=\{\alpha\in \mathcal{SS}^{\prime}_{n}: |\im \, \alpha|=p \}.\] Moreover, for $1\leq p\leq n-1$ let $M(p)$ be the collection of all requisite elements in $J_{p}^{*}$, and let \[G(p)=M(p)\cup E(J^{*}_{p}).\]\noindent Then we have the following lemmas.
 \begin{lemma} For $1\le p\le n-1$, $|G(p)|=\binom{n-1}{p-1}+\sum\limits_{r=p}^{n-1}{\binom{n-1}{r}}{\binom{r-1}{p-1}}$.
\end{lemma}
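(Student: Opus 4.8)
The plan is to compute $|G(p)|$ by observing that the union $G(p)=M(p)\cup E(J^{*}_{p})$ is disjoint, and then adding the two cardinalities, both of which have already been determined in Lemma \ref{nid}. First I would verify that $M(p)\cap E(J^{*}_{p})=\emptyset$. Any requisite element $\alpha_{i}$ satisfies $1<i$, so in particular it contains the assignment $2\mapsto 1$; since $2\neq 1$, the point $2$ is not fixed, and hence $\alpha_{i}$ is never an idempotent. Thus no requisite element lies in $E(J^{*}_{p})$, the union is disjoint, and consequently $|G(p)|=|M(p)|+|E(J^{*}_{p})|$.

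Next I would match each summand with Lemma \ref{nid}. The requisite elements counted here are exactly the requisite elements of height $p$, which are precisely the set $M(p)$ appearing in Lemma \ref{nid}(i) under the natural identification of the nonzero elements of the Rees quotient ${RSS}_{n}^{\prime}(p)$ with the elements of $\mathcal{SS}_{n}^{\prime}$ of height exactly $p$; hence $|M(p)|=\binom{n-1}{p-1}$. Likewise, the nonzero idempotents of ${RSS}_{n}^{\prime}(p)$ correspond under the same identification to the idempotents of $\mathcal{SS}_{n}^{\prime}$ of height $p$, i.e. to $E(J^{*}_{p})$; so by Lemma \ref{nid}(ii) (which in turn rests on Remark \ref{remm}), $|E(J^{*}_{p})|=\sum\limits_{r=p}^{n-1}\binom{n-1}{r}\binom{r-1}{p-1}$. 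Adding the two counts yields the claimed formula.

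The only point requiring care — and the main (though mild) obstacle — is the bookkeeping that the cardinalities of Lemma \ref{nid}, which were phrased for the Rees quotient ${RSS}_{n}^{\prime}(p)$, transfer verbatim to the height-$p$ stratum $J^{*}_{p}$ of $\mathcal{SS}_{n}^{\prime}$. This is immediate from the fact that the elements of $\mathcal{SS}_{n}^{\prime}$ of height $p$ are in bijective correspondence with the nonzero elements of ${RSS}_{n}^{\prime}(p)$, a correspondence that preserves both the property of being idempotent and the property of being a requisite element. Once this identification is noted, the result reduces to a single addition, and the proof is complete.
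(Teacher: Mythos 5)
Your proposal is correct and is essentially the paper's own proof: the paper's proof consists of the single line ``the result follows from Lemma \ref{nid}'', and your write-up merely makes explicit the two facts the paper leaves implicit, namely that the union $G(p)=M(p)\cup E(J^{*}_{p})$ is disjoint and that the counts of Lemma \ref{nid}, stated for the Rees quotient ${RSS}_{n}^{\prime}(p)$, transfer verbatim to the height-$p$ stratum $J^{*}_{p}$ under the obvious bijection. One small repair: your disjointness argument (``$2$ is not fixed, hence $\alpha_{i}$ is not idempotent'') tacitly uses the principle that an idempotent fixes every point of its \emph{domain}, which is false for order-decreasing partial maps --- for instance $\begin{pmatrix}\{2,3\}\\2\end{pmatrix}$ is idempotent but does not fix $3$; the correct one-line argument is that an idempotent must fix every point of its \emph{image}, and $1\in\im\,\alpha_{i}$ is not even in $\dom\,\alpha_{i}$, so $\alpha_{i}$ cannot be idempotent.
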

\begin{proof} The result follows from Lemma \ref{nid}.
\end{proof}
\begin{lemma}\label{lm1} For $0\leq p\leq n-3$, $J^{*}_{p}\subset \langle J^{*}_{p+1}\rangle$. In other words, if $\alpha\in J^{*}_{p}$ then $\alpha\in \langle J^{*}_{p+1}\rangle$ for $1\leq p\leq n-3$.
\end{lemma}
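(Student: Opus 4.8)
The goal is to show that every element of $\mathcal{SS}^{\prime}_{n}$ of height $p$ is a product of elements of height exactly $p+1$; since $\langle J^{*}_{p+1}\rangle$ is a subsemigroup, it suffices to exhibit, for each $\alpha\in J^{*}_{p}$, a factorization inside $\langle J^{*}_{p+1}\rangle$ with all factors in $J^{*}_{p+1}$. The case $p=0$ is immediate: the empty map equals $\beta\gamma$ for any two height-$1$ maps $\beta,\gamma$ whose image and domain are disjoint. So assume $1\le p\le n-3$.

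The plan is to reduce $\alpha$ to two structurally simple pieces and thicken each. Writing $\alpha$ as in \eqref{1} and putting $m_i=\min A_i$, one has the canonical decomposition $\alpha=e\delta$, where
\[ e=\begin{pmatrix}A_1&\cdots&A_p\\ m_1&\cdots&m_p\end{pmatrix}, \qquad \delta=\begin{pmatrix}m_1&\cdots&m_p\\ a_1&\cdots&a_p\end{pmatrix}, \]
so that $e$ is the idempotent collapsing each kernel class to its minimum and $\delta$ is injective. Both have height $p$, and crucially $1\notin\im e$ since every $m_i\ge 2$. Because $\langle J^{*}_{p+1}\rangle$ is closed under products, it is enough to place each of $e$ and $\delta$ in $\langle J^{*}_{p+1}\rangle$.

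Each of $e$ and $\delta$ I would thicken to height $p+1$ by a single auxiliary class that is created by the left factor and annihilated by the right one. The hypothesis $p\le n-3$ supplies the room: since $|\im\alpha|=p$, at least three elements of $[n]$ lie outside $\im\alpha$, and $\dom\delta=\{m_1,\dots,m_p\}$ omits at least two points of $\{2,\dots,n\}$. For the injective $\delta$ (with small domain) I would write $\delta=\beta_1\gamma_1$, where $\beta_1$ fixes each $m_i$ and adjoins a new class at a spare domain point $c\in\{2,\dots,n\}\setminus\dom\delta$, raising the height to $p+1$, while $\gamma_1$ performs the actual decrease $m_i\mapsto a_i$ together with a \emph{separate} auxiliary class, chosen so that the $\beta_1$-image of $c$ lies outside $\dom\gamma_1$; then $c$ is dropped and $\beta_1\gamma_1$ recovers $\delta$ on $\{m_1,\dots,m_p\}$. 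As the decrease to a possible value $1$ is carried out only by $\gamma_1$, whose domain sits inside $\{2,\dots,n\}$, both factors stay in $\mathcal{SS}^{\prime}_{n}$. For the idempotent $e$ the same device applies when $\dom\alpha\neq\{2,\dots,n\}$; when $\dom\alpha=\{2,\dots,n\}$ there is no spare domain point, but then $\sum|A_i|=n-1>p$ forces some class to contain at least two elements, and I would instead split that class into its lower and upper blocks to raise the height, re-merging them with the right factor. In either subcase $1\notin\im e$ guarantees the right factor never needs $1$ in its domain.

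The routine part is the index bookkeeping: choosing each inserted domain point and its image so that every constructed map is simultaneously isotone, order-decreasing, of height exactly $p+1$, and has domain inside $\{2,\dots,n\}$, and so that the auxiliary class of the left factor is genuinely discarded in the product. The main obstacle I anticipate is precisely keeping all factors inside $\mathcal{SS}^{\prime}_{n}$, that is, enforcing $1\notin\dom$ while still reaching height $p+1$: this is exactly why the decomposition $\alpha=e\delta$ is introduced, since it isolates the only place (namely $\gamma_1$) where the image value $1$ can arise, and why the hypothesis $p\le n-3$, equivalently the guaranteed surplus of at least two free domain points and three free image values, is what makes the two auxiliary classes, one in each factor with disjoint roles, available without collision.
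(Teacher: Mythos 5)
Your reduction $\alpha=e\delta$ and the idea for thickening the idempotent factor $e$ are workable, but the treatment of the injective factor $\delta$ has a genuine gap: you assume $\delta$ can be extended to a map $\gamma_{1}\in J^{*}_{p+1}$ of $\mathcal{SS}^{\prime}_{n}$ that agrees with $\delta$ on $\{m_{1},\dots,m_{p}\}$ and carries one extra kernel class to a \emph{new} image value. The hypothesis $p\le n-3$ does supply spare \emph{domain} points, but it does not supply a spare image value compatible with isotonicity, and in general no such extension exists. Take $n=6$, $p=3=n-3$ and
\[
\delta=\begin{pmatrix}2&4&6\\1&2&3\end{pmatrix}\in J^{*}_{3}.
\]
The only points available for an auxiliary class are $3$ and $5$. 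Isotonicity forces the image of a class containing $3$ to lie in $\{1,2\}$, the image of a class containing $5$ to lie in $\{2,3\}$, and the image of the class $\{3,5\}$ to equal $2$; in every case the value already lies in $\im\,\delta=\{1,2,3\}$, so every map in $\mathcal{SS}^{\prime}_{6}$ extending $\delta$ still has height $3$. Hence no factorization $\delta=\beta_{1}\gamma_{1}$ of the kind you describe exists; moreover this $\delta$ is itself injective, so for $\alpha=\delta$ your canonical decomposition gives $e=1_{\{2,4,6\}}$ and returns you to the same impossible extension problem. The obstruction is typical whenever $\im\,\delta$ is an interval pressed against $1$ while $\dom\,\delta$ is spread out.

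This is precisely the difficulty the paper's proof is built to avoid: it never tries to thicken an arbitrary injective map. By Theorem \ref{hq} (via Lemma \ref{ms}), any $\alpha\in J^{*}_{p}$ with $1\in\im\,\alpha$ factors as $\alpha=\beta\alpha_{i}$, where $\alpha_{i}$ is the \emph{requisite} element with $\im\,\alpha_{i}=\im\,\alpha$, whose domain is the initial segment $\{2,\dots,i\}$ together with the fixed points $a_{i}<\dots<a_{p}$, and where $1\notin\im\,\beta$. A requisite element is always extendable: every $c\notin\dom\,\alpha_{i}\cup\im\,\alpha_{i}$ satisfies $c>i$, so adjoining $c$ as a fixed point produces a requisite element of height $p+1$, and $\alpha_{i}$ is then a partial identity of height $p+1$ times that element (the paper's Case ii). The remaining factor $\beta$, and likewise every idempotent of height $p$, has image avoiding $1$ and so lies in $U=\{\alpha\in\mathcal{SS}^{\prime}_{n}:1\notin\im\,\alpha\}\cong\mathcal{LS}_{n-1}$, where the idempotent-generation results of \cite{zuf} finish the job. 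In short, the factor that actually attains the image value $1$ must first be put into requisite form, where insertion of a new fixed point is always possible; your factor $\delta$ is not of that form, and that is exactly where the argument breaks.
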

\begin{proof} Using Theorem \ref{hq}, it suffices to prove that every element in $G(p)$ can be expressed as a product of elements in $G(p+1)$. That is to say  every idempotent of height $p$ can be expressed as product of idempotents of height $p+1$, and every requisite element, can be expressed as product of requisite elements  of height $p+1$. Thus, we consider the elements of $E(J^{*}_{p})$  and $M(p)$ separately.\\

\noindent\textbf{i.} The  elements in $E(J^{*}_{p})$:\\

 Let $\epsilon\in E(J^{*}_{p})$ be expressed as: \[\epsilon= \begin{pmatrix}
A_1 &  \cdots  & A_p \\
t_1 &  \cdots  & t_p
\end{pmatrix},  \] \noindent where $t_{i}=\min A_{i}$ for all $1\le i\le p$. Notice that  since $1\not\in \dom \, \epsilon$, then $1\not\in A_{1}$, and so $t_{1}\neq 1$. This implies that  $\epsilon\in E(U)$, where $U=\{\alpha\in \mathcal{SS}^{\prime}_{n}: \, 1\not\in \, \im \, \alpha \}$, which is isomorphic to the large Schr\"{o}der monoid  $\mathcal{LS}_{n-1}$, and so, the proof follows from [\cite{zuf}, Lemma 3.7].\\

\noindent\textbf{ii.} The  elements in $M(p)$:\\

Let $\alpha_{i}$ be a requisite element of height $p$, which has the form: \[\alpha_{i}= \begin{pmatrix}2& \cdots &i&a_{i}&\cdots&a_p\\1&\cdots&i-1&a_{i}&\cdots&a_p\end{pmatrix},  \]\noindent where  $1<i<a_{i}<a_{i+1}<\cdots< a_{p}\leq n$.
Now, since  $p\leq n-3$, it implies that $(\dom \, \alpha_{i}\cup \im \, \alpha_{i})^{c}$ contains at least two elements, specifically $c$ and $d$. We may suppose without loss of generality that, $d<c$.
Now let $A=\dom \, \alpha_{i}\cup\{d\}$ and $B=\dom \, \alpha_{i}\cup\{c\}$. Now  define $\beta$ and $\gamma$ as follows:

For $x\in A$ and $y\in B$
\[x\beta=\left\{
                                                                                                                                \begin{array}{ll}
                                                                                                                                  x, & \hbox{if $x\neq d $;} \\
                                                                                                                                  d, & \hbox{if $x=d$}
                                                                                                                                \end{array}
                                                                                                                              \right.,
\] \noindent and
\[y\gamma=\left\{
                                                                                                                                \begin{array}{ll}
                                                                                                                                  y\alpha_{i}, & \hbox{if $y\neq c$;} \\
                                                                                                                                  c, & \hbox{if $y=c$}
                                                                                                                                \end{array}
                                                                                                                              \right.
.\]
\noindent Notice that $\beta\in E(J^{*}_{p+1})\subset G(p+1)$, and it is not difficult to see that  $\gamma$ is a requisite element in $M(p+1)\subset G(p+1)$. One can now easily show that $\alpha_{i}=\beta\gamma$.

The proof of the lemma is now complete.
\end{proof}

Consequently, we have the following result.

\begin{theorem}\label{knp} Let $K(n,p)$ be as defined in \eqref{kn}. Then  for $1\leq p\leq n-2$, we have the \[\text{rank } K(n,p) =\binom{n-1}{p-1}+\sum\limits_{r=p}^{n-1}{\binom{n-1}{r}}{\binom{r-1}{p-1}}.\]
\end{theorem}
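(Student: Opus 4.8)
The plan is to prove the two inequalities $\text{rank } K(n,p)\le |G(p)|$ and $\text{rank } K(n,p)\ge |G(p)|$, where $G(p)=M(p)\cup E(J^{*}_{p})$ and, by the preceding lemma, $|G(p)|=\binom{n-1}{p-1}+\sum_{r=p}^{n-1}\binom{n-1}{r}\binom{r-1}{p-1}$. The upper bound will come from exhibiting $G(p)$ as a generating set of $K(n,p)$, and the lower bound from transporting an arbitrary generating set to the Rees quotient and appealing to Theorem \ref{pb}.

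For the upper bound I would first handle the top $\mathcal{J}^{*}$-class. By Lemma \ref{hh}, every $\alpha\in J^{*}_{p}$ is a product of idempotents and requisite elements all of height $p$, i.e.\ a product of members of $G(p)$, so $J^{*}_{p}\subseteq\langle G(p)\rangle$. I would then descend through the lower classes using Lemma \ref{lm1}: for each $q$ with $0\le q\le p-1$ we have $J^{*}_{q}\subseteq\langle J^{*}_{q+1}\rangle$, whose hypothesis $q\le n-3$ is guaranteed precisely by the standing assumption $p\le n-2$ (the extreme index being $q=p-1\le n-3$). A downward induction then yields $J^{*}_{q}\subseteq\langle G(p)\rangle$ for all $0\le q\le p$, whence $\langle G(p)\rangle=\bigcup_{q=0}^{p}J^{*}_{q}=K(n,p)$ and $\text{rank } K(n,p)\le|G(p)|$.

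For the lower bound I would use the Rees quotient epimorphism $\phi\colon K(n,p)\to{RSS}_{n}^{\prime}(p)$, which sends every element of height $<p$ to $0$ and restricts to a bijection of $J^{*}_{p}$ onto ${RSS}_{n}^{\prime}(p)\setminus\{0\}$. If $A$ generates $K(n,p)$, then $\phi(A)$ generates ${RSS}_{n}^{\prime}(p)$; since $0$ is absorbing, every nonzero element is a product of nonzero generators, so $\phi(A\cap J^{*}_{p})=\phi(A)\setminus\{0\}$ already generates ${RSS}_{n}^{\prime}(p)\setminus\{0\}$. By Lemma \ref{minnn} the set $G(p)$ is the unique minimum generating set of ${RSS}_{n}^{\prime}(p)\setminus\{0\}$ (it is exactly the set of indecomposable elements), so $G(p)\subseteq\phi(A\cap J^{*}_{p})$; injectivity of $\phi$ on $J^{*}_{p}$ then forces $|A\cap J^{*}_{p}|\ge|G(p)|$, and hence $|A|\ge|G(p)|$. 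Combining the two bounds gives the asserted value.

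The one point needing care is the range of $p$: the descent above exhausts Lemma \ref{lm1} down to the index $p-1$, which is exactly why the statement is confined to $p\le n-2$. For the extreme ideal $K(n,n-1)=\mathcal{SS}^{\prime}_{n}$ the step $J^{*}_{n-2}\subseteq\langle J^{*}_{n-1}\rangle$ is unavailable, so generators of lower height become indispensable and the rank strictly exceeds $|G(n-1)|$; this case must be treated separately and yields the value $3n-4$. Modulo this bookkeeping the argument is routine once Lemmas \ref{hh}, \ref{lm1}, \ref{minnn} and Theorem \ref{pb} are granted.
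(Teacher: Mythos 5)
Your proposal is correct and takes essentially the same route as the paper: the upper bound via Lemma \ref{hh} together with the descent Lemma \ref{lm1} (yielding $\langle G(p)\rangle = K(n,p)$ for $p\le n-2$), and the lower bound by passing to the Rees quotient and invoking Lemma \ref{minnn} and Theorem \ref{pb}. Your write-up simply spells out the bookkeeping (the index range in the descent and the transport of an arbitrary generating set through the quotient map) that the paper compresses into its one-line appeal to Theorem \ref{pb}.
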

\begin{proof}Notice that by Lemma \ref{lm1},   $\langle J^{*}_{p} \rangle= K(n,p)$ for all $1\le p\leq n-2$.  Notice also that, $\langle E({RSS}_{n}^{\prime}(p)\setminus\{0\})\cup M(p)\rangle= J^{*}_{p}$. The result now follows from Theorem \ref{pb}.
\end{proof}

It is important to note that Lemma \ref{lm1} does not cover the case $p=n-2$, meaning that the assertion  $J^{*}_{n-2}\subset \langle J^{*}_{n-1}\rangle$ is not true. This can be demonstrated  by first noting that every element $\alpha$ in $J^{*}_{n-1}$ is an injective  map, because $\dom \, \alpha=[n]\setminus\{1\}$. However, in $J^{*}_{n-2}$ there are maps which are not necessarily injective. For example, a map $\alpha$ with $\dom \, \alpha=[n]\setminus\{1\}$ and $|\im \, \alpha|=n-2$,  and cannot be generated by  injective maps, since injective maps alone can only generate  injective maps. Therefore, we need to investigate the generating set of  $J^{*}_{n-1}$, and its relationship with $G(n-2)$.

The elements of $J^{*}_{n-1}$ are obviously injective, isotone and decreasing maps from $[n]\setminus\{1\}$ into $[n]$, and are of two types:
requisite elements $\alpha_{i}$ of the form \begin{equation}\label{kk-1}\alpha_{i}=\left(\begin{array}{cccccccc}
                                                                            {2}& 3& \cdots &i-1 &i& i+1&\cdots&  n \\
                                                                            {1} & 2&\cdots & i-2 & i-1 & i+1&\cdots&   n
                                                                          \end{array}
\right)  ~~(2\leq i\leq n),\end{equation}
\noindent and the unique idempotent  \[\epsilon=\left(\begin{array}{cccccccc}
                                                                            {2}&  \cdots &  n \\
                                                                            {2} & \cdots &   n
                                                                          \end{array}
\right),\]
\noindent which is the unique left identity on $\mathcal{SS}^{\prime}_{n}$, but not an identity. Perhaps we may call the semigroup $\mathcal{SS}^{\prime}_{n}$, a\emph{ left monoid.} It is  also clear that $J^{*}_{n-1}$ has only 1 $\mathcal{R}^{*}-$class and $n$ $\mathcal{L}^{*}-$classes. Obviously, there are $n-1$ $\mathcal{L}^{*}-$classes each containing   a unique requisite element. Thus, $G(n-1)=M(n-1)\cup\{\epsilon\}=J^{*}_{n-1}$, and so $|G(n-1)|=n$.  We now have the following:
\begin{lemma}\label{minnnn} Let $\alpha$, $\beta$ be elements in $\langle J^{*}_{n-1}\rangle$. Then $\alpha\beta\in G(n-1)$  if and only if $\alpha, \, \beta\in G(n-1)$  and $\alpha\beta=\alpha$ or $\alpha\beta=\beta$.
\end{lemma}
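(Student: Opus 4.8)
The plan is to prove the nontrivial (forward) implication by a pure height argument, treating the converse as an immediate observation. First I would record that composition never increases height: for any $\alpha,\beta\in\mathcal{SS}^{\prime}_{n}$ one has $\im(\alpha\beta)=(\im\alpha\cap\dom\beta)\beta\subseteq\im\beta$, so $h(\alpha\beta)\le\min\{h(\alpha),h(\beta)\}$. Since every element of $\mathcal{SS}^{\prime}_{n}$ has domain contained in $\{2,\dots,n\}$, the maximal attainable height is $n-1$. Hence if $\alpha\beta\in G(n-1)=J^{*}_{n-1}$, so that $h(\alpha\beta)=n-1$, then necessarily $h(\alpha)=h(\beta)=n-1$; and because $J^{*}_{n-1}$ is by definition the set of \emph{all} height-$(n-1)$ elements of $\mathcal{SS}^{\prime}_{n}$, this already forces $\alpha,\beta\in J^{*}_{n-1}=G(n-1)$.

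The crux is then to decide which factor may be a requisite element. Recall $G(n-1)=\{\epsilon\}\cup\{\alpha_{i}:2\le i\le n\}$ and that each requisite element $\alpha_{i}$ of \eqref{kk-1} sends the least point $2$ to $1$. Since $1\notin\dom\beta$ for every $\beta\in\mathcal{SS}^{\prime}_{n}$, left-multiplication by a requisite element strips the point $2$ from the domain: $2\notin\dom(\alpha_{i}\beta)$, whence $\dom(\alpha_{i}\beta)\subseteq\{3,\dots,n\}$ and $h(\alpha_{i}\beta)\le n-2$. This contradicts $h(\alpha\beta)=n-1$, so $\alpha$ cannot be requisite and must equal $\epsilon$. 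Using that $\epsilon$ is the left identity of $\mathcal{SS}^{\prime}_{n}$ (as noted just before the lemma), I conclude $\alpha\beta=\epsilon\beta=\beta$, which in particular yields $\alpha\beta=\alpha$ or $\alpha\beta=\beta$, as required.

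The converse is routine: if $\alpha,\beta\in G(n-1)$ and $\alpha\beta\in\{\alpha,\beta\}$, then trivially $\alpha\beta\in G(n-1)$. The main obstacle I anticipate is not the height bookkeeping but correctly exploiting the asymmetry of $\mathcal{SS}^{\prime}_{n}$: unlike the more symmetric situation of Lemma \ref{minnn}, the decisive feature here is that every requisite element collapses the least admissible point $2$ onto $1$, which lies outside every admissible domain. I would be careful to state this ``domain-collapsing'' phenomenon precisely, since it is exactly what eliminates every requisite-times-anything product and thereby reduces the entire analysis to the single surviving possibility $\alpha=\epsilon$.
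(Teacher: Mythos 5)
Your proof is correct, but it takes a genuinely different route from the paper's. The paper proves this lemma by declaring it ``similar to Lemma \ref{minnn}'', i.e.\ by a case analysis on what the product $\alpha\beta$ is: if $\alpha\beta$ is the idempotent $\epsilon$, one invokes the fixed-point identity $F(\alpha\beta)=F(\alpha)\cap F(\beta)$ (Lemma \ref{hqd}) and counts fixed points to force $\alpha=\beta=\epsilon$; if $\alpha\beta$ is a requisite element, one matches domains and images and then uses the dichotomy $2\alpha\in\{1,2\}$ to force $\alpha=\epsilon$ and $\alpha\beta=\beta$. You instead give a single unified argument: the height bound $h(\alpha\beta)\le\min\{h(\alpha),h(\beta)\}$ pins both factors inside $J^{*}_{n-1}=G(n-1)$, and then the observation that every requisite element sends $2$ to $1\notin\dom\beta$ (so $h(\alpha_i\beta)\le n-2$) rules out a requisite left factor outright, leaving $\alpha=\epsilon$ and hence $\alpha\beta=\beta$ since $\epsilon$ is a left identity. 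The two proofs share the decisive ``$2\mapsto 1$ kills the domain'' observation, but yours dispenses with Lemma \ref{hqd} and the case split entirely, exploiting two features special to the top class: $G(n-1)$ is \emph{all} of $J^{*}_{n-1}$, and it contains a left identity --- neither of which holds for general $p$, which is exactly why the paper's argument is instead kept parallel to the general-$p$ Lemma \ref{minnn}. Your treatment is also slightly more careful on one point the paper glosses over: in $\langle J^{*}_{n-1}\rangle$ (unlike in ${RSS}_{n}^{\prime}(p)\setminus\{0\}$, where all elements have height exactly $p$) the factors $\alpha,\beta$ may a priori have height below $n-1$, and your height step is precisely what is needed to rule that out before any structural analysis can begin.
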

\begin{proof} The proof is similar to that of Lemma \ref{minnn}.
\end{proof}
\begin{lemma}\label{jnn} The rank$\langle J^{*}_{n-1} \rangle=n$.
\end{lemma}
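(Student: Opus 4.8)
The plan is to show that $\langle J^{*}_{n-1}\rangle = \mathrm{rank}\,n$ by establishing that $G(n-1)=M(n-1)\cup\{\epsilon\}$ is simultaneously a generating set for $\langle J^{*}_{n-1}\rangle$ and a minimal one. Since by the discussion preceding the lemma we already know $G(n-1)=J^{*}_{n-1}$ (the whole top $\mathcal{J}^{*}$-class consists of the $n-1$ requisite elements together with the single idempotent $\epsilon$), the set $G(n-1)$ trivially generates $\langle J^{*}_{n-1}\rangle$, and it has exactly $|G(n-1)|=n$ elements. So the content of the proof is entirely the lower bound: no generating set of $\langle J^{*}_{n-1}\rangle$ can have fewer than $n$ elements.

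First I would invoke the standard argument that every generating set must contain $G(n-1)$ itself. The key tool is Lemma \ref{minnnn}: it states that for $\alpha,\beta\in\langle J^{*}_{n-1}\rangle$, a product $\alpha\beta$ lands in $G(n-1)$ \emph{only} when $\alpha,\beta\in G(n-1)$ and the product equals one of the two factors. This is the analogue, for the top class, of the indecomposability phenomenon captured by Lemma \ref{minnn}. The consequence I want to extract is that each element $g\in G(n-1)$ is \emph{indecomposable}: it cannot be written as a product of two elements of $\langle J^{*}_{n-1}\rangle$ unless one of those factors is $g$ itself. Hence if $A$ is any generating set with $\langle A\rangle=\langle J^{*}_{n-1}\rangle$, then expressing a given $g\in G(n-1)$ as a word in $A$ and peeling off factors forces $g$ to already appear in $A$; therefore $G(n-1)\subseteq A$, giving $|A|\ge |G(n-1)|=n$.

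The step I expect to require the most care is confirming that each requisite element $\alpha_i$ and the idempotent $\epsilon$ really are indecomposable as products of elements of $\langle J^{*}_{n-1}\rangle$, and in particular that $\epsilon$ cannot be dropped from the generating set. Because every element of $J^{*}_{n-1}$ is injective with $\dom=[n]\setminus\{1\}$, any product of two such elements again has domain $[n]\setminus\{1\}$ and height at most $n-1$; combined with Lemma \ref{minnnn} this pins down exactly when the product stays inside $G(n-1)$. The subtle point is that $\epsilon$ is the unique idempotent in $J^{*}_{n-1}$ and acts as a left identity, so it can only arise as a product $\gamma\delta$ with $\gamma\delta=\gamma$ or $\gamma\delta=\delta$ equal to $\epsilon$ — meaning $\epsilon$ itself must be a generator, not a byproduct of the requisite elements multiplying among themselves.

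Putting the two bounds together I would conclude $\mathrm{rank}\,\langle J^{*}_{n-1}\rangle = |G(n-1)| = n$. Concretely, the generating set $G(n-1)$ of size $n$ furnishes the upper bound, while the indecomposability of its elements via Lemma \ref{minnnn} furnishes the matching lower bound $|A|\ge n$ for every generating set $A$. This completes the identification of the rank as $n$, and it is exactly this value that will feed into the subsequent computation of $\mathrm{rank}\,\mathcal{SS}^{\prime}_{n}=3n-4$ once the contribution of the lower $\mathcal{J}^{*}$-classes, governed by Theorem \ref{knp}, is combined with the top class.
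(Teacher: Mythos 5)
Your proof is correct and takes essentially the same route as the paper: the paper's own (very terse) proof likewise derives the result from Lemma \ref{minnnn} together with the fact that $|G(n-1)|=|J^{*}_{n-1}|=n$, i.e.\ the indecomposability of the elements of $G(n-1)$ forces every generating set to contain all of $J^{*}_{n-1}$, matching the trivial upper bound. Your write-up simply makes explicit the peeling-off argument that the paper leaves implicit.
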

\begin{proof} The result follows from Lemma \ref{minnnn} and the fact that $|G(n-1)|=|J^{*}_{n-1}|$.
\end{proof}

 The  elements of $M(n-2)$ are generally of the form: \begin{equation}\label{iki}{\alpha}_{i, k}=\left(\begin{array}{ccccccccc}
                                                                            {2}& \cdots &i &i+1& \cdots& k-1 &   k+1 & \cdots& n \\
                                                                            {1} & \cdots & i-1 & i+1 &\cdots& k-1  &k+1 &\cdots & n
                                                                          \end{array}
\right)\text{ }(2\leq i< k\leq n).\end{equation}\noindent Notice that $1, \, k\not\in \dom \, {\alpha}_{ i, k}$ and $i,k\not\in \im \, {\alpha}_{k, i}$. Also, the element

\begin{equation}\label{i1i}{\epsilon}_{ 1,k}=\left(\begin{array}{ccccccc}
                                                                            {2}&\cdots &k-1 &k+1& \cdots& n \\
                                                                            {2} & \cdots & k-1 & k+1 &\cdots& n
                                                                          \end{array}
\right)\end{equation} \noindent is a partial identity in $G(n-2)$ for all $2\leq k\leq n$. Notice also  that $1, \, k\not\in \dom \, {\epsilon}_{  1, k}$ and $1,k\not\in \im \, {\epsilon}_{  1, k}$.

The subsequent lemma demonstrates that the requisite elements together with the partial identity $\epsilon_{1,2}$ in $G(n-2)$ are not required to be included in any minimal  generating set of $\langle J^{*}_{n-2} \cup J^{*}_{n-1} \rangle$. However, the requisite elements in $G(n-1)$ suffice to be included in any  generating set of $\langle J^{*}_{n-2} \cup J^{*}_{n-1} \rangle$.
\begin{lemma}\label{ss1} $\langle G(n-2) \rangle \subseteq \langle \left(G(n-2)\setminus (M(n-2)\cup\{\epsilon_{1,2}\})\right)\cup G(n-1) \rangle=\langle J^{*}_{n-2}\cup J^{*}_{n-1}  \rangle=\mathcal{SS}^{\prime}_{n}$.
\end{lemma}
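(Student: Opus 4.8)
The plan is to treat the single inclusion and the two equalities in the chain separately, with the whole weight of the argument resting on two explicit factorizations that exhibit the generators $M(n-2)$ and $\epsilon_{1,2}$ as redundant. Write $H=\left(G(n-2)\setminus(M(n-2)\cup\{\epsilon_{1,2}\})\right)\cup G(n-1)$ for the trimmed candidate set. Since $H\subseteq G(n-2)\cup G(n-1)\subseteq J^{*}_{n-2}\cup J^{*}_{n-1}$, the inclusion $\langle H\rangle\subseteq\langle J^{*}_{n-2}\cup J^{*}_{n-1}\rangle$ is immediate, and $\langle J^{*}_{n-2}\cup J^{*}_{n-1}\rangle\subseteq \mathcal{SS}^{\prime}_{n}$ holds trivially because $\mathcal{SS}^{\prime}_{n}$ is a semigroup containing both $J^{*}$-layers. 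So all the real work lies in the reverse containments.

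First I would establish the two factorizations. For a requisite element $\alpha_{i,k}\in M(n-2)$ as in \eqref{iki}, observe that $2\le i<k$ forces $k\ge 3$, so the partial identity $\epsilon_{1,k}$ of \eqref{i1i} lies in $E(J^{*}_{n-2})\setminus\{\epsilon_{1,2}\}\subseteq H$; a direct composition (remembering the right-hand convention $x(\epsilon_{1,k}\alpha_i)=((x)\epsilon_{1,k})\alpha_i$) then gives $\alpha_{i,k}=\epsilon_{1,k}\alpha_{i}$, where $\alpha_{i}$ is the requisite element of \eqref{kk-1} in $G(n-1)\subseteq H$. For the excluded partial identity $\epsilon_{1,2}$, the identity on $\{3,\dots,n\}$, the crucial observation is that squaring the requisite element $\alpha_{2}\in G(n-1)$ deletes $2$ from the domain (since $2\alpha_2=1\notin\dom\,\alpha_2$), yielding $\epsilon_{1,2}=\alpha_{2}^{2}$, again a product of elements of $H$. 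Consequently $M(n-2)\cup\{\epsilon_{1,2}\}\subseteq\langle H\rangle$, whence $G(n-2)\subseteq\langle H\rangle$ and therefore $\langle G(n-2)\rangle\subseteq\langle H\rangle$, which is the first (inclusion) assertion of the lemma.

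With this in hand the middle equality follows at once: $J^{*}_{n-1}=G(n-1)\subseteq H$, while by Lemma \ref{hh} every element of $J^{*}_{n-2}$ is a product of elements of $G(n-2)$, so $J^{*}_{n-2}\subseteq\langle G(n-2)\rangle\subseteq\langle H\rangle$. Hence $J^{*}_{n-2}\cup J^{*}_{n-1}\subseteq\langle H\rangle$, giving $\langle J^{*}_{n-2}\cup J^{*}_{n-1}\rangle=\langle H\rangle$. For the last equality I would invoke Lemma \ref{lm1}: iterating $J^{*}_{p}\subseteq\langle J^{*}_{p+1}\rangle$ downward from $p=n-3$ yields $J^{*}_{p}\subseteq\langle J^{*}_{n-2}\rangle$ for every $0\le p\le n-2$, so that $\mathcal{SS}^{\prime}_{n}=\bigcup_{p}J^{*}_{p}\subseteq\langle J^{*}_{n-2}\cup J^{*}_{n-1}\rangle$, the reverse inclusion being trivial.

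The main obstacle is purely the discovery of the two factorizations $\alpha_{i,k}=\epsilon_{1,k}\alpha_{i}$ and $\epsilon_{1,2}=\alpha_{2}^{2}$; everything afterwards is bookkeeping resting on Lemmas \ref{hh} and \ref{lm1}. The one subtlety I would verify carefully is that the factor $\epsilon_{1,k}$ used for $\alpha_{i,k}$ is never the excluded $\epsilon_{1,2}$, which is forced by $k\ge 3$, and that $\alpha_{2}$ is a genuine member of $G(n-1)$ rather than one of the removed elements.
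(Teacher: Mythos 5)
Your proposal is correct and takes essentially the same route as the paper: the entire argument there also rests on the two factorizations $\alpha_{i,k}=\epsilon_{1,k}\alpha_{i}$ (with $k\ge 3$ guaranteeing $\epsilon_{1,k}\neq\epsilon_{1,2}$) and $\epsilon_{1,2}=\alpha_{2}^{2}$. The surrounding bookkeeping you spell out --- reducing the middle equality to Lemma \ref{hh} and the final equality to iterating Lemma \ref{lm1} --- is exactly what the paper leaves implicit after stating that it suffices to show $M(n-2)\cup\{\epsilon_{1,2}\}\subset \langle G(n-1)\cup (E(J^{*}_{n-2})\setminus\{\epsilon_{1,2}\})\rangle$.
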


\begin{proof} It is enough to show that $M(n-2)\cup\{\epsilon_{1,2}\}\subset \langle G(n-1)\cup (E(J^{*}_{n-2})\setminus\{\epsilon_{1,2}\}) \rangle$.
 Now  let   ${\alpha}_{  i, k} \in M(n-2)$ be  as expressed in \eqref{iki}. So, take the idempotent  ${\epsilon}_{1,k}\in G(n-2)$ and   $\alpha_{i}\in G(n-1)$  as expressed in \eqref{i1i} \& \eqref{kk-1}, respectively; and observe that for any $i<k$, we have: \begin{align*}{\epsilon}_{ 1,k}\alpha_{i}&=\left(\begin{array}{ccccccccccc}
                                                                            {2}&\cdots &i&i+1&\cdots&k-1 &k+1& \cdots& n \\
                                                                            {2} & \cdots &i&i+1&\cdots& k-1 & k+1 &\cdots& n
                                                                          \end{array}
\right)\left(\begin{array}{cccccccc}
                                                                            {2}& 3& \cdots  &i& i+1&\cdots&  n \\
                                                                            {1} & 2&\cdots & i-1 & i+1&\cdots&   n
                                                                          \end{array}
\right)\\&=\left(\begin{array}{ccccccccccc}
                                                                            {2}&\cdots& i&i+1&\cdots&k-1 &k+1& \cdots& n \\
                                                                            {1} & \cdots &i-1&i+1&\cdots& k-1 & k+1 &\cdots& n
                                                                          \end{array}
\right)\\&={\alpha}_{ i,k}.
\end{align*}

 Similarly, observe that if $i=2$, then \begin{align*}\alpha_{2}^{2}&=\left(\begin{array}{cccc}
                                                                            {2}&3 & \cdots& n \\
                                                                            {1} & 3 &\cdots& n
                                                                          \end{array}
\right)\left(\begin{array}{cccc}
                                                                            {2}&3 & \cdots& n \\
                                                                            {1} & 3 &\cdots& n
                                                                          \end{array}
\right)\\&=\left(\begin{array}{ccc}
                                                                            {3}&\cdots& n \\
                                                                            {3} & \cdots & n
                                                                          \end{array}
\right)\\&={\epsilon}_{ 1,2}.
\end{align*}
\noindent The result now follows.
\end{proof}

Finally, we conclude the paper with the following result:

\begin{theorem} Let $\mathcal{SS}_{n}^{\prime}$ be as defined in \eqref{qn2}. Then the rank $\mathcal{SS}_{n}^{\prime}=3n-4$.
\end{theorem}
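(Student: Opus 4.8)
The plan is to sandwich $\mathrm{rank}\,\mathcal{SS}_n'$ between $3n-4$ from above and $3n-4$ from below, exploiting the layered $\mathcal{J}^*$-structure $\mathcal{SS}_n'=J_1^*\cup\cdots\cup J_{n-1}^*$. For the upper bound I would simply invoke Lemma \ref{ss1}, which already exhibits the generating set $\bigl(E(J_{n-2}^*)\setminus\{\epsilon_{1,2}\}\bigr)\cup G(n-1)$; it then remains only to count it. Since $G(n-1)=M(n-1)\cup\{\epsilon\}=J_{n-1}^*$ has exactly $n$ elements (Lemma \ref{jnn}), and since the idempotents of height $n-2$ split into the $n-2$ maps of full domain $\{2,\dots,n\}$ with a single merged block $\{j,j+1\}$ $(2\le j\le n-1)$ and the $n-1$ partial identities $\epsilon_{1,m}$ of \eqref{i1i} with domain $\{2,\dots,n\}\setminus\{m\}$ $(2\le m\le n)$, we get $|E(J_{n-2}^*)|=(n-2)+(n-1)=2n-3$ (in agreement with Lemma \ref{nid}(ii) at $p=n-2$). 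The two pieces live in different classes and are disjoint, so the set has $(2n-3-1)+n=3n-4$ elements, giving $\mathrm{rank}\,\mathcal{SS}_n'\le 3n-4$.

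For the lower bound, let $A$ be any generating set; I would show $|A|\ge 3n-4$ by pinning down $3n-4$ elements forced to lie in $A$. Since $h(\gamma\delta)\le\min\{h(\gamma),h(\delta)\}$ (height is non-increasing, cf. Lemma \ref{jjj}), any factorisation of an element of the top class $J_{n-1}^*$ into members of $A$ uses only factors of height $n-1$; by Lemma \ref{minnnn} such a product is trivial, so every element of $J_{n-1}^*$ must itself belong to $A$. This already forces the $n$ elements of $J_{n-1}^*$.

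Next I would show that every idempotent $e\in E(J_{n-2}^*)$ other than $\epsilon_{1,2}$ must also lie in $A$. The engine is the fixed-point identity $F(\gamma\delta)=F(\gamma)\cap F(\delta)$ of Lemma \ref{hqd}, valid because all maps are order-decreasing. If $e=z_1\cdots z_k$ with $z_i\in A$, then each $z_i$ has height $\ge n-2$ (otherwise the product drops below $n-2$), and iterating Lemma \ref{hqd} gives $F(e)=\bigcap_i F(z_i)$; as $f(e)=n-2$, each factor satisfies $F(z_i)\supseteq F(e)$ and $f(z_i)\ge n-2$. A short inspection of $J_{n-1}^*\cup J_{n-2}^*$ shows the only maps with $f\ge n-2$ are $\epsilon$ (with $f=n-1$), the requisite map $\alpha_2\in J_{n-1}^*$ of \eqref{kk-1} (with $F(\alpha_2)=\{3,\dots,n\}$), and the height-$(n-2)$ idempotents, since any height-$(n-2)$ element with $f=n-2$ is automatically idempotent; in particular the requisite elements of $M(n-2)$ have $f\le n-3$ and can never occur as factors. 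Writing $F(e)=\{2,\dots,n\}\setminus\{m\}$, for $m\ge 3$ the set $\{3,\dots,n\}=F(\alpha_2)$ does not contain $F(e)$, so $\alpha_2$ is excluded, and the only idempotents with fixed-point set $F(e)$ are the two sharing it: the full-domain map merging $\{m-1,m\}$ and the partial identity $\epsilon_{1,m}$. A direct check shows these two form a left-zero semigroup ($xy=x$) on which $\epsilon$ acts as a two-sided identity, so after deleting the removable $\epsilon$-factors the product $z_1\cdots z_k$ equals its first non-$\epsilon$ factor; hence it can equal $e$ only if $e$ occurs among the $z_i$, i.e. $e\in A$. This forces the $2(n-2)=2n-4$ idempotents attached to the values $m\ge 3$, namely the $n-2$ full-domain idempotents together with the $n-2$ partial identities $\epsilon_{1,m}$ with $3\le m\le n$. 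Being of height $n-2$, these are disjoint from $J_{n-1}^*$, so $|A|\ge n+(2n-4)=3n-4$.

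The main obstacle is this last forcing step: one must verify carefully that the constraints $F(z_i)\supseteq F(e)$ and $f(z_i)\ge n-2$ really admit no factors beyond $\epsilon$, $\alpha_2$ and the two idempotents attached to $F(e)$, and that these generate nothing new in the relevant left-zero configuration. The single exception $\epsilon_{1,2}$ (the case $F=\{3,\dots,n\}$, $m=2$) is genuinely decomposable, since $\alpha_2^2=\epsilon_{1,2}$ as computed in the proof of Lemma \ref{ss1}; this is precisely why exactly one idempotent of height $n-2$ may be spared, matching the upper-bound generating set and pinning the rank at $3n-4$.
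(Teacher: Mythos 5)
Your proof is correct, and its upper-bound half coincides exactly with the paper's proof: both take the generating set $\bigl(E(J^{*}_{n-2})\setminus\{\epsilon_{1,2}\}\bigr)\cup G(n-1)$ supplied by Lemma \ref{ss1} and count it as $(2n-4)+n=3n-4$ (your enumeration of $E(J^{*}_{n-2})$ as the $n-2$ full-domain idempotents merging a block $\{j,j+1\}$ plus the $n-1$ partial identities $\epsilon_{1,m}$ agrees with Lemma \ref{nid}(ii)). Where you genuinely go beyond the paper is the lower bound: the paper disposes of minimality with the single word ``Clearly,'' whereas you prove it. Your forcing argument is sound and uses only the paper's own tools: heights cannot increase under composition, so any factorisation of an element of $J^{*}_{n-1}$ stays inside $J^{*}_{n-1}$ and Lemma \ref{minnnn} then forces all $n$ elements of $J^{*}_{n-1}$ into every generating set; and for an idempotent $e$ of height $n-2$ with $F(e)=\{2,\dots,n\}\setminus\{m\}$, $m\geq 3$, the fixed-point identity of Lemma \ref{hqd} confines all possible factors to $\epsilon$, the full-domain idempotent merging $\{m-1,m\}$, and $\epsilon_{1,m}$ (the requisite elements being excluded because $f\leq n-3$, and $\alpha_{2}$ because $2\in F(e)\not\subseteq F(\alpha_{2})$); since those three elements form a left-zero band with $\epsilon$ acting as identity, any product of them equals its first non-$\epsilon$ factor, so $e$ itself must appear. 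This pins $2n-4$ idempotents plus the $n$ top-class elements in every generating set, and your observation that $\alpha_{2}^{2}=\epsilon_{1,2}$ explains why exactly one idempotent escapes the forcing. In short: same route as the paper, but your version actually substantiates the minimality claim that the paper leaves unproved, and could serve as the missing justification for its ``Clearly.''
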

\begin{proof} Clearly $(G(n-2)\setminus (M(n-2)\cup\{\epsilon_{1,2}\}))\cup G(n-1)$ is the minimum generating set of $\langle J^{*}_{n-2}\cup J^{*}_{n-1} \rangle=\mathcal{SS}_{n}^{\prime}$, and so by Lemmas  \ref{jnn}  \& \ref{nid} we see that
\begin{align*} \text{ rank }\mathcal{SS}_{n}^{\prime}&= |G(n-2)|- |M(n-2)|-1+|G(n-1)|\\&=\binom{n-1}{n-2}+ \sum\limits_{r=n-2}^{n-1}{\binom{n-1}{r}}{\binom{r-1}{n-3}}- \binom{n-1}{n-2}-1+n\\&=\sum\limits_{r=n-2}^{n-1}{\binom{n-1}{r}}{\binom{r-1}{n-3}}+n-1=3n-4,
\end{align*}
as stipulated.
\end{proof}

\noindent{\bf Acknowledgements, Funding and/or Conflicts of interests/Competing interests.} The first named author would like to thank Bayero University and TETFund (TETF/ES/UNI/KANO/TSAS/2022) for financial support. He would also like to thank Sultan Qaboos University, Oman,  for hospitality during a 1-year postdoc research visit to the institution from June 2024 to December 2024.


\begin{thebibliography}{99}
\markboth{Reference}{}
\bibitem{zua}   Ali, B.,   Umar, A. and  Zubairu, M. M.  Regularity and Green's relations for the semigroups of partial
and full contractions of a finite chain. \emph{Scientific African}, 21, (2023) p.e01890.
\bibitem{FOUN} Fountain, J. B.  Adequate Semigroups. \emph{Proc. Edinb. Math. Soc.} \textbf{22} (1979), 113--125.
\bibitem{FOUN2} Fountain,  J. B.  Abundant Semigroups. \emph{Proc. Lond. Math. Soc.} \textbf{44} (1982), 103--129.

\bibitem{g1} Garba, G. U.   On the nilpotent rank of certain semigroups of transformations, \emph{Glasgow Math. J.} \textbf{36} (1), (1994), 1--9.
%\bibitem{g2} Garba, G. U.  Nilpotents in partial one-one order-preserving transformations, \emph{Semigroup Forum}, \textbf{48} (1994), 37--49.
 \bibitem{gu1} Garba, G. U.  On the idempotent ranks of certain semigroups of order-preserving
transformations, \emph{Portugaliae Mathematica}, \textbf{51} (1994), 185–-204.
\bibitem{gm} Gomes, G. M. S.  and Howie, J. M.  On the ranks of certain semigroups of order preserving transformations, \emph{Semigroup Forum}, \textbf{45} (1992), 272-–282.


%\bibitem{gm2} Gomes, G. M. S.   and  Howie, J. M.   Nilpotents in flnite symmetric inverse semigroup, \emph{Proc. Edinburgh Math. Soc.,} 30 (1987), 383--395.
\bibitem{gm3} Gomes, G. M. S.  and Howie, J. M.   On the ranks of certain flnite semigroups of
transformations, \emph{Math. Proc. Cambridge Phil. Soc.}, \textbf{101} (1987), 395--403.


\bibitem{howi}   Howie, J. M.  \emph{Fundamentals of semigroup theory}. London Mathematical Society, New series 12. The Clarendon Press, Oxford University Press, 1995.

\bibitem{hf}  Howie, J. M.  and McFadden, R. B.    Idempotent rank in finite full transformation semigroups, \emph{Proc. Roy. Soc. Edinb.} A, \textbf{114} (1990), 161--167.
  \bibitem{hrb}  Howie, J. M. and  Marques Ribeiro, M. I.   Rank properties in finite semigroups,  \emph{Comm.  Algebra,}, \textbf{27}: 11, (1999),  5333--5347.
 \bibitem{hrb2} Howie, J. M. and  Marques Ribeiro, M. I. Rank Properties in Finite Semigroups II: The Small Rank and the Large Rank. \emph{Southeast Asian Bulletin of Mathematics,}  24, (2000), 231–-237.
 \bibitem{HRS}  Howie, J. M.,  Robertson, E. F.   and Schein, B. M.   A combinatorial property of
 finite full transformation semigroups. \emph{Proc. Roy. Soc. Edinb.} \textbf{109A} (1988), 319--328.
\bibitem{ph} Higgins, P. M.    \emph{Techniques of semigroup theory.} Oxford university Press, 1992.
\bibitem{ph1} Higgins, P. M.  Divisors of semigroups of order-preserving mappings on a finite chain,
\emph{Intern. Journal of Algebra and Computations} \textbf{5} (1995), 725-–742.
\bibitem{al1} Laradji, A.  and  Umar, A.  On certain finite semigroups of order-decreasing transformations I, \emph{Semigroup Forum}, \textbf{69} (2004), 184–-200.
\bibitem{al2}  Laradji, A.  and Umar, A. Combinatorial results for semigroups of order-decreasing partial transformations, \emph{Journal of Integer Sequences}, \textbf{7} (2004), 04.3.8.
\bibitem{al3} Laradji, A. and  Umar, A. Combinatorial results for semigroups of order-preserving
partial transformations, \emph{Journal of Algebra}, \textbf{278} (2004), 342–-359.
\bibitem{al4}  Laradji, A.  and  Umar, A. Combinatorial results for semigroups of order-preserving
full transformations, \emph{Semigroup Forum}, \textbf{72} (2006), 51–-62.

\bibitem{al5}  Laradji, A. and  Umar, A.  Lattice paths and order-preserving partial
transformations, 	\emph{Utilitas Mathematica}, \textbf{101} (2016), 23--36.
\bibitem{auc} Umar, A.  Some combinatorial problems in the theory of partial transformation semigroups, \emph{Algebra Discrete Math.},  \textbf{7} (2014),  No. 1, 110–-135.
\bibitem{um} Umar, A.   and Zubairu, M. M.  On certain semigroups of  contraction mappings of a finite chain.  \emph{Algebra Discrete Math.} \textbf{32} (2021), No. 2,  299--320.
\bibitem{umar} Umar, A.    \emph{Semigroups of order-decreasing transformations}. PhD thesis, University of St Andrews. (1992).
\bibitem{quasi} Umar, A.  A class of quasi-adequate transformation semigroups. \emph{Portugaliae Mathematica} \textbf{ 51},   (1994), 4,  553--570.
%\bibitem{ua} Umar, A.    On the semigroups of order-decreasing finite full transformations. \emph{Proc. Roy. Soc. Edinburgh Sect. A} \textbf{120}, (1992), no. 1-2, 129--142.
%\bibitem{ua1} Umar, A.    On the semigroups of partial one-one order-decreasing flnite transformations, \emph{Proc. Roy. Soc. Edinburgh Sect. A}, 123 (1993), 355--363.
\bibitem{ua} Umar, A.    On certain infinite semigroups of order-decreasing transformations I. \emph{Comm. Algebra} \textbf{25} (1997), no. 1-2, 2989--2999.
%\bibitem{ua3} Umar, A.    On certain infinite semigroups of order-decreasing transformations II. \emph{Arab. J. Sci. Eng. Sect. A. Sci.} \textbf{28} (2003),  203--210.
\bibitem{zm1} Zubairu, M. M., Umar, A. and Aliyu, J. A. On certain semigroups of order decreasing full contraction mappings of a finite chain. \emph{Recent Developments in Algebra and Analysis:  (Trend in Mathematics), Springer Int. Pub.}, \textbf{1}, (2024),  35--45.
\bibitem{zuf} Zubairu, M. M., Umar, A. and Al-Kharousi, F. S. On Schr\"{o}der monoids. (\emph{Submitted}).
\bibitem{zuf1} Zubairu, M. M., Umar, A. and Al-Kharousi, F. S. The monoid of monotone and decreasing partial transformations. (\emph{Submitted, December, 2024}).



\end{thebibliography}
\end{document}